\numberwithin{equation}{section}
\newtheorem{theorem}{Theorem}[section]
\newtheorem{corollary}[theorem]{Corollary}
\newtheorem{lemma}[theorem]{Lemma}
\newtheorem{problem}[theorem]{Problem}
\newtheorem{observation}[theorem]{Observation}
\newtheorem{proposition}[theorem]{Proposition}
\newtheorem{remark}[theorem]{Remark}
\newtheorem{fact}[theorem]{Fact}
\theoremstyle{definition}
\newtheorem{definition}[theorem]{Definition}
\theoremstyle{remark}
\newtheorem{notation}[theorem]{Notation}
\newcommand{\Ker}{{\rm Ker}}
\newcommand{\rest}{{\restriction}}
\newcommand{\Rang}{{\rm Rang}}\newcommand{\Aut}{{\rm Aut}}
\newcommand{\aut}{{\rm Aut}}
\newcommand{\Prime}{{\rm Prime}}
\newcommand{\Hom}{{\rm Hom}}\newcommand{\End}{{\rm End}}
\newcommand{\Tor}{{\rm Tor}}\newcommand{\tor}{{\rm tor}}
\newcommand{\id}{{\rm id}}
\newcommand{\sn}{{\smallskip\noindent}}
\newcommand{\cH}{{\mathscr H}}
\newcommand{\cf}{{\rm cf}}
\def\mathunderaccent#1#2 {\let\theaccent#1\skewfactor#2
	\mathpalette\putaccentunder}
\def\putaccentunder#1#2{\oalign{$#1#2$\crcr\hidewidth
		\vbox to.2ex{\hbox{$#1\skew\skewfactor\theaccent{}$}\vss}\hidewidth}}
\date{\today}
\begin{document}
\author[M. Asgharzadeh]{Mohsen Asgharzadeh}
\address{Hakimiyeh, Tehran, Iran.}
\email{mohsenasgharzadeh@gmail.com}

\author[M. Golshani]{Mohammad Golshani}
\address{School of Mathematics\\
 Institute for Research in Fundamental Sciences (IPM)\\
 P.O.\ Box:
19395--5746, Tehran, Iran.}
\email{golshani.m@gmail.com}

\author{Daniel Herden}
\address[]{Department of Mathematics, Baylor University, Sid Richardson Building, 1410 S.4th Street, Waco, TX 76706, USA}
\email{daniel\_herden@baylor.edu}

\author{Saharon Shelah}
\address[]{The Hebrew University of Jerusalem, Einstein Institute of Mathematics, Edmond J. Safra Campus, Givat Ram, Jerusalem 91904, Israel}
\address{Department of Mathematics, Hill Center-Busch Campus,  Rutgers, The State University of New Jersey, 110 Frelinghuysen Road, Piscataway, NJ 08854-8019, USA}
\email{shelah@math.huji.ac.il}

\thanks{}
\keywords
\subjclass{}

\title{On groups well represented as automorphism groups of groups}
\dedicatory {Dedicated to L{\'a}szl{\'o} Fuchs for his  100th  birthday}
\thanks{The second author's research has been supported by a grant from IPM (No. 1402030417). The second author's work is based upon research funded by Iran National Science Foundation (INSF) under project No. 4027168. The fourth author research partially supported by the Israel Science Foundation (ISF) grant no: 1838/19, and Israel Science Foundation (ISF) grant no: 2320/23;
	Research partially supported by the grant ``Independent Theories'' NSF-BSF, (BSF 3013005232). This is 1049 in Shelah's list of publications.}
\keywords{automorphisms; diamond principle;   representation of groups; set theoretical methods in group theory.}
\subjclass[2010]{Primary 20A15; 20E36 Secondary 20C99}

\begin{abstract}
Assuming G\"{o}del's axiom of constructibility $\bold V=\bold L,$ we present a characterization of those groups $L$ for which there exist
arbitrarily large groups $H$ such that $\aut(H) \cong L$. In particular, we show that it suffices to have
one such group $H$ such that the  size of its center is bigger than $ 2^{|L |+\aleph_0}$.
\end{abstract}

\maketitle

\section{Introduction}\label{0}

The representation problem from group theory asks:

\begin{problem}\label{p1}
For a given  group $L$, is there a group $H$ such that $\aut(H)\cong L $?
\end{problem}
 There are a lot of interesting research papers in this area. Here, we recall only a short list of them. The study of automorphism groups of finite abelian groups  started with Shoda \cite{sho}.
 Hallett–Hirsch \cite{HH} and Corner \cite{CO} classified
finite groups  which are the automorphism
 group of some torsion-free group. For infinite groups, Beaumont–Pierce~\cite{BP} studied automorphism groups of torsion-free groups of rank two.
May \cite{MAY} has more results on abelian automorphism groups of torsion-free groups of countable rank.

A related problem is the classical problem of establishing whether two algebraic structures are isomorphic when their automorphism groups are isomorphic. Thanks to \cite{L5}, the answer is yes for abelian $p$-groups, when $p \neq 2$. Also, by the work of Corner–Goldsmith~\cite{COG},
the answer for reduced torsion-free modules over the ring $J_p$ of $p$-adic integers, where  $p \neq 2$,  is  affirmative.
The question of whether two abelian $2$-groups are isomorphic when their automorphism groups are isomorphic is  still an open problem.
For an excellent survey on these and related topics, see Fuchs \cite{fuchs}.

Despite  lots of work, the study of the automorphism group of groups is not documented very well, and there is no universal  solution to Problem \ref{p1}.
There are several examples of groups that can never be the automorphism group of some
abelian group, so that Problem \ref{p1} reduces to characterizing those groups $L$ that can be the automorphism
group of another group.

Throughout this paper, we work with arbitrary groups, so they are not required to be abelian. In order to study Problem
\ref{p1},  we study the structure of the automorphism group and  inner automorphisms of a given group.
It may be worth noting that Baer~\cite{B}  studied the normal subgroup structure of $\aut(H)$ for infinite $p$-groups $H$.
For more details on this and further achievements, see again the book of Fuchs \cite{fuchs}.

We suggest a good version of the problem:

\begin{problem}\label{p2}
Characterize the groups $L$ such that for arbitrarily large cardinals $\lambda$ there is a group $H$ of cardinal $\lambda$ such that $\aut(H)\cong L $.
\end{problem}

We approach Problem \ref{p2} as follows. 
Given groups $L$ and $H$ and a monomorphism
$F: L \to \aut(H)$, our aim  is to find a group $H' \supseteq H$ and an isomorphism
 $F': L \to \aut(H')$ in such a way that for each $\ell \in L,$ $F'(\ell)$ is an automorphism of $H'$ which extends $F(\ell)$.
 Thus, we have to find $H'$ in such  a way that, for each $\ell \in L,$ $F(\ell)$ extends to an automorphism of $H'$, and such that if $h \in \aut(H)$
  is not in the range of $F,$ then $h$ can not be extended to an automorphism of $H'$; furthermore, any automorphism of $H'$ must result from extending an automorphism of $H$. We show that, under some extra assumptions on our initial setup, this is always possible.

Let us recall that the group $H$ in general is not assumed to be abelian. So, we consider its center $\mathcal{Z}(H)$,
which leads to the following exact sequence:
$$\xymatrix{
	&&&& N 	\ar[dl]_{h^*}\\ &0\ar[r] &\mathcal{Z}(H)\ar[r] &H\ar[r]^{ h}&N\ar[u]_{=}\ar[r] &0
	&&&}$$
We also consider  a map $h^*$, which assigns to each $b \in N:=H/\mathcal{Z}(H)$ a preimage under $h$, but we do  not require it to be a homomorphism.
We assume that $N$ is a normal subgroup of $L$, and that it determines the inner automorphisms of $H$ in the sense that,  for $a \in H$,
if we define $a^*$ as $a^*:=F(h(a))$, then $a^*(x)=axa^{-1}$ for all $x \in H$, and
$N_{}\cong F(N)={\rm Inn}(H) \subseteq {\rm Aut}(H).$
Putting all these things together will lead to the definition of a class $\mathbf C_{\rm{aut}}$ of tuples $$\mathbf c:= (L_{\mathbf c},N_{\mathbf c},H_{\mathbf c},h_{\mathbf c},h^*_{\mathbf c},F_{\mathbf c}).$$

Our main purpose is
to furnish $\mathbf c$ with some additional structures.
This leads us to defining various versions of $\mathbf C_{\rm{aut}}$, which may serve as a framework for solving  Problem~\ref{p2}. In Section 1, we point out the right version to use. In fact, the most advanced versions of $\mathbf C_{\rm{aut}}$ which appears in this work is denoted by $\mathbf C_{\rm{aut}}^6$,  which consists of elements $({\bold c},\mathcal{G})$.
In particular, the   non-explained notion
$\mathcal{G}$ consists of a family of certain tuples ${\mathbf g}$ which assign some torsion-free abelian groups $\mathbb{G}_{\mathbf g}$. For more details,
we refer the reader to see  Definition  \ref{def3}(4).
One version of our main conclusion  is:

\begin{theorem}\label{int}
	Assume $({\bold c},\mathcal{G}) \in \mathbf C_{\rm{aut}}^6$, {\rm Hom}$(N_{\bold c},\mathbb{G}_{\mathbf g})=0$ for all $\mathbf g \in \mathcal{G}$, and  $F_{\mathbf c}: L_{\mathbf c} \stackrel{\cong}\rightarrow \aut(H_{\mathbf c})$. Let $\lambda = \text{\rm cf}(\lambda) > 2^{\|\bold c\|}$ and suppose
	$\diamondsuit_\lambda(S)$ holds for some $S \subseteq S^\lambda_{\aleph_0}$ which is stationary  and non-reflecting.
	{Then}  there is ${\mathbf m}\in\mathbf C_{\rm{aut}}^6$ such that the following holds:
	\begin{enumerate}
		
		\item [$(a)$] $H_{\mathbf m} \supseteq H_{\mathbf c}$ has size $\lambda$ and  $H_{\mathbf m}/ H_{\mathbf c}$ is $\lambda$-free.
		
		\item [$(b)$]  If $g \in \aut (H_{\mathbf m})$, then for some $b \in
		L_{\bold c}$ and $f \in \Hom(N_{\bold c},\mathcal{Z}(H_{\bold m}))$, we have
		$g = F_{\mathbf m}^b \circ g_f$, 
where the automorphism $g_f$ of $H_{\bold m}$ is defined  by   $$g_f(x) := f(h_{\mathbf m}(x)) \cdot x.$$
	\end{enumerate}	
\end{theorem}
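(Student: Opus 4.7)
The plan is a standard Shelah-style ``prediction and killing'' construction along $\lambda$, using $\diamondsuit_\lambda(S)$ to guess candidate automorphisms of $H_{\mathbf{m}}$ and using the auxiliary torsion-free abelian groups $\mathbb{G}_{\mathbf g}$, $\mathbf g\in\mathcal G$, to obstruct those guesses which are not already of the prescribed form $F_{\mathbf m}^{b}\circ g_f$. Concretely, I construct a continuous increasing chain $(\mathbf c_\alpha:\alpha\le\lambda)$ inside $\mathbf C_{\mathrm{aut}}^6$ extending $\mathbf c_0=\mathbf c$, with $H_{\mathbf c_\alpha}$ of size ${<}\lambda$ for $\alpha<\lambda$, and set $\mathbf m:=\mathbf c_\lambda$ so that $H_{\mathbf m}=\bigcup_{\alpha<\lambda}H_{\mathbf c_\alpha}$. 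At non-limit stages $\alpha+1$ with $\alpha\notin S$, I extend $\mathbf c_\alpha$ by freely adjoining copies of the groups $\mathbb{G}_{\mathbf g}$ (indexed by a bookkeeping enumeration of $\mathcal G$), making $H_{\mathbf c_{\alpha+1}}/H_{\mathbf c_\alpha}$ free; at limit stages outside $S$ I simply take unions.

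At each $\alpha\in S$, $\diamondsuit_\lambda(S)$ supplies a candidate $g_\alpha\in\aut(H_{\mathbf c_\alpha})$. I ask whether $g_\alpha$ already admits a decomposition $F_{\mathbf c_\alpha}^{b}\circ g_f$ with $b\in L_{\mathbf c}$ and $f\in\Hom(N_{\mathbf c},\mathcal Z(H_{\mathbf c_\alpha}))$; if yes, pass through. If not, I choose at stage $\alpha+1$ a suitable $\mathbf g\in\mathcal G$ and adjoin a new element whose interaction with $H_{\mathbf c_\alpha}$ is determined by a system of equations inside $\mathbb{G}_{\mathbf g}$, set up so that any automorphism of $H_{\mathbf c_{\alpha+1}}$ extending $g_\alpha$ would induce a homomorphism $N_{\mathbf c}\to\mathbb{G}_{\mathbf g}$ not coming from the center. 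Since $\Hom(N_{\mathbf c},\mathbb{G}_{\mathbf g})=0$, this is impossible, so $g_\alpha$ cannot be extended to an automorphism of $H_{\mathbf m}$. This killing step is the core of the argument; its feasibility is exactly what is built into the $\mathbf C_{\mathrm{aut}}^6$-framework through the parameter $\mathcal G$.

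Because $S$ is non-reflecting in $\lambda$, for every $\beta<\lambda$ there is a club $C_\beta\subseteq\beta$ disjoint from $S$; using this together with the fact that only at stages in $S$ does the construction impose relations rather than pure freeness, a standard Eklof-style filtration argument shows $H_{\mathbf m}/H_{\mathbf c}$ is $\lambda$-free, giving $(a)$. For $(b)$, let $g\in\aut(H_{\mathbf m})$. The set $C=\{\alpha<\lambda:g{\restriction}H_{\mathbf c_\alpha}\in\aut(H_{\mathbf c_\alpha})\}$ is a club, and since $|H_{\mathbf c_\alpha}|<\lambda$ and $2^{\|\mathbf c\|}<\lambda$, the diamond catches $g{\restriction}H_{\mathbf c_\alpha}$ at some $\alpha\in S\cap C$. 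By the killing step $g_\alpha$ must then already be of the form $F_{\mathbf c_\alpha}^{b}\circ g_f$ on $H_{\mathbf c_\alpha}$; the uniqueness of such a decomposition (using $F_{\mathbf c}$ an isomorphism onto $\aut(H_{\mathbf c})$ and the rigidity built into $\mathbf C_{\mathrm{aut}}^6$) together with continuity propagates the decomposition of $g$ to all of $H_{\mathbf m}$.

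The main obstacle, as noted, is the killing step: one must verify that in $\mathbf C_{\mathrm{aut}}^6$ one can always extend $\mathbf c_\alpha$ by an element tied to an appropriate $\mathbb{G}_{\mathbf g}$ so that every would-be extension of a ``bad'' $g_\alpha$ is converted, via the $N_{\mathbf c}$-action, into a nontrivial element of $\Hom(N_{\mathbf c},\mathbb{G}_{\mathbf g})$. This is where the specific data in Definition~\ref{def3}(4) and the hypothesis $\Hom(N_{\mathbf c},\mathbb{G}_{\mathbf g})=0$ are used decisively; the bookkeeping must ensure that every conceivable ``shape'' of bad $g_\alpha$ is targeted by some $\mathbf g\in\mathcal G$ along the construction.
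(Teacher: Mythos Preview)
Your scaffold—an increasing continuous chain in $\mathbf M_{\mathbf c}$, diamond guessing at $S$, $\lambda$-freeness from non-reflection of $S$—matches the paper. But the core of the argument, the killing step, is not the mechanism you describe, and you have misplaced the role of the hypothesis $\Hom(N_{\mathbf c},\mathbb G_{\mathbf g})=0$.

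The paper does \emph{not} kill a bad $g_\delta$ by arranging that an extension would produce a nonzero element of $\Hom(N_{\mathbf c},\mathbb G_{\mathbf g})$. Instead there is a dichotomy. One calls $\mathbf g\in\mathcal G$ \emph{active} at $\delta$ if, roughly, for cofinally many $\alpha<\delta$ there is an embedding $f\colon\mathbb G_{\mathbf g}\hookrightarrow\mathcal Z(H_{\mathbf m_\beta})$ and an $x\in\mathbb G_{\mathbf g}$ with $g_\delta(f(x))\notin\mathcal Z(H_{\mathbf m_\alpha})+{\rm Im}(f)$. If some $\mathbf g$ is active, one chooses such embeddings $f_n$ along an $\omega$-cofinal sequence in $\delta$, passes to the $\mathbb Z$-adic completion of $\mathcal Z(H_{\mathbf m_\delta})$, and forms elements $y_n=\sum_{m\ge n}m!\,f_m(x)$ there; using a second twisted family $\tilde y_n=y_n+\xi f_0(x)$ with $\xi=\sum_n n!$, one checks that for at least one of these choices the extension $\hat g_\delta$ sends $z_0$ outside the subgroup generated by $\mathcal Z(H_{\mathbf m_\delta})$ and the $z_m$'s. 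Adjoining these elements gives $\mathbf m_{\delta+1}$ over which $g_\delta$ cannot extend. This is a completion/divisibility obstruction and has nothing to do with $N_{\mathbf c}$.

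If no $\mathbf g$ is active (the ``nice'' case), one cannot kill $g_\delta$ at all; instead one shows $g_\delta$ was never bad. Non-activity forces $g_\delta$ to act on every embedded copy of $\mathbb G_{\mathbf g}$ by a fixed $\mathbb Z$-combination $\sum_i s_i^* F_{\mathbf m_\delta}^{b_i^*}$, and the one-generated closure clause in Definition~\ref{def3}(4) makes this combination uniform over all $\mathbf g\in\mathcal G$. Since $F_{\mathbf c}\colon L_{\mathbf c}\cong\aut(H_{\mathbf c})$, there is a single $b\in L_{\mathbf c}$ with $F_{\mathbf m_\delta}^b$ matching this action on the center, and Proposition~\ref{2b.16} then supplies $f$ with $g_\delta=F_{\mathbf m_\delta}^b\circ g_f$, contradicting $\delta\in U$.

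The hypothesis $\Hom(N_{\mathbf c},\mathbb G_{\mathbf g})=0$ enters elsewhere: via Lemma~\ref{tri} it yields $\Hom(N_{\mathbf c},\mathcal Z(H_{\mathbf m}))\subseteq\Hom(N_{\mathbf c},H_{\mathbf c})$, so there are at most $2^{\|\mathbf c\|}<\lambda$ pairs $(b,f)$. This is what lets you, given a putatively bad $g\in\aut(H_{\mathbf m})$, find $\alpha_*<\lambda$ such that $g\restriction H_{\mathbf m_\alpha}$ already differs from every $F_{\mathbf m_\alpha}^b\circ g_f$ for all $\alpha>\alpha_*$; the diamond then traps $g$ at stationarily many $\delta\in S$ above $\alpha_*$, contradicting the killing. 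Your ``propagation by uniqueness at a single caught stage'' should be replaced by this cardinality-plus-stationarity contradiction; a single $\delta$ with $g_\delta$ of the right form does not by itself pin down $g$ globally.
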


Here,  $S^\lambda_{\aleph_0}:=\{\alpha < \lambda\mid \cf(\alpha)=\aleph_0  \}$, and $\diamondsuit_\lambda(S)$ is the  Jensen's diamond, see Definition \ref{dia}.
Also, the mysterious condition {\rm Hom}$(N_{\bold c},\mathbb{G}_{\mathbf g})=0$ is slightly stronger
than the  trivial dual condition {\rm Hom}$(N_{\bold c},\mathbb{Z})=0$, see Remark~\ref{sl}(2). 
In fact, modulo the consistency of the existence of large cardinals, it is proved in \cite{sh:1028} that consistently $\boxtimes_\lambda$ may hold for $\lambda=\aleph_{\omega_1\cdot \omega}$, where

	\begin{enumerate}
	\item [$\boxtimes_\lambda$] If $G\neq0$ is any $\lambda$-free abelian group, then $\Hom(G,\mathbb{Z})\neq0$.
	\end{enumerate}
According to \cite{Sh:883},  $\boxtimes_{\aleph_k}$ fails for $k<\omega,$ and, by  \cite{sh:1028}, $\boxtimes_{\aleph_{\omega_1 \cdot n}}$ fails as well for all $n<\omega.$
So, we expect that  Theorem \ref{int} holds in ZFC for $\lambda < \aleph_{\omega_1\cdot \omega} $, but this result has to wait.

In Section 1, we give a systematic study of the representation framework $\mathbf C_{\rm{aut}}$. We start by defining $\mathbf C_{\rm{aut}}^+$, the class of tuples $(L,H,F)$  where $F:L\stackrel{\cong}\longrightarrow\aut(H)$. We introduce seven other versions and compare them, as  a weak version  of $\mathbf C_{\rm{aut}}^+$. Their relation to each other is as follows:
$$ \mathbf C_{\rm{aut}}^0\preceq\mathbf C_{\rm{aut}}^1\preceq\mathbf C_{\rm{aut}}^2\preceq\mathbf C_{\rm{aut}}^3\preceq\mathbf C_{\rm{aut}}^4\subseteq \mathbf C_{\rm{aut}}^5\preceq \mathbf C_{\rm{aut}}^6,$$
with the convention that $\mathbf C_{\rm{aut}}^i\preceq\mathbf C_{\rm{aut}}^{i+1}$ means $ \mathbf C_{\rm{aut}}^{i+1}$ is constructed from $\mathbf C_{\rm{aut}}^{i}$.
In
Section~2, we present a systematic study of explicit
automorphisms in $\aut(H)$. This includes
$g_f$ and its variations.
 In Section 3,  we prove Theorem \ref{int} (see Theorem \ref{5h.11}) via presenting a connection between Problem \ref{p1}
and the trivial dual conjecture, which searches for the existence of almost free groups with trivial dual, see \cite{Sh:883,sh:1028}.
Namely, we show that if $\mathbf m  \in  \mathbf M_{\mathbf c}$ is auto-rigid, i.e., $F_{\mathbf m}: L_{\mathbf m} \rightarrow \aut(H_{\mathbf m})$ is an isomorphism, and $| H_{\mathbf m} | > | L_{\mathbf c}|$, then  $\Hom(N_{\bold c},\Bbb Z)$ is trivial, see Lemma \ref{lem3}.
We then use Jensen's diamond principle 	
to complete the proof of Theorem \ref{int}. This enables us to present the following
solution to Problem \ref{p2} (see Corollary \ref{a299}):
\begin{corollary}   \label{intc}Assume G\"{o}del's axiom of constructibility $\bf V=\bf L$, and let $L$ be any group. 
Then the following are equivalent:
\begin{itemize}	\item[$(a)$] For every  cardinal $\lambda = \text{\rm cf}(\lambda) > 2^{|L |+ {\aleph_0}}$, there is a group $H$ of cardinality  $\lambda$ such that $\aut(H)\cong L$.
	\item[$(b)$]
	There is 	$\mathbf c\in\mathbf C^+_{\rm{aut}}$
	such that $L_ \mathbf c\cong L$, $\aut(H_{\mathbf c})\cong L$ and $|\mathcal{Z}(H_{\mathbf c})| > 2^{|L |+\aleph_0}$.
\end{itemize}
\end{corollary}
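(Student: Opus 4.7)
The proof splits into the two implications. The direction $(a) \Rightarrow (b)$ reduces to a short cardinality computation, while $(b) \Rightarrow (a)$ is the substantive direction and proceeds by applying Theorem~\ref{int} to a suitably enhanced version of the tuple $\mathbf c$ supplied by~$(b)$, with $\mathbf V = \mathbf L$ providing the required instance of Jensen's diamond.

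For $(a) \Rightarrow (b)$, the plan is to pick any regular $\lambda > 2^{|L|+\aleph_0}$ and let $H$ with $|H|=\lambda$ and $F: L \stackrel{\cong}{\to} \aut(H)$ be supplied by~$(a)$. Since the inner automorphism map embeds $H/\mathcal Z(H)$ into $\aut(H) \cong L$, we have $|H/\mathcal Z(H)| \leq |L| < \lambda$, and therefore $|\mathcal Z(H)| = \lambda > 2^{|L|+\aleph_0}$. Identifying $N := H/\mathcal Z(H)$ with its image in $L$ under $F^{-1}$ makes it a normal subgroup of~$L$; assembling the data $(L, N, H, h, h^*, F)$, with $h$ the canonical projection and $h^*$ any set-theoretic section, yields the required $\mathbf c \in \mathbf C_{\rm aut}^+$.

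For $(b) \Rightarrow (a)$, start from $\mathbf c$ as in~$(b)$. Since $\mathbf c$ is auto-rigid and $|H_{\mathbf c}| > 2^{|L|+\aleph_0} > |L_{\mathbf c}|$, Lemma~\ref{lem3} applies and yields $\Hom(N_{\mathbf c}, \mathbb Z) = 0$. Given any regular $\lambda$ large enough for Theorem~\ref{int}, I would invoke $\mathbf V = \mathbf L$ to fix a stationary non-reflecting $S \subseteq S^\lambda_{\aleph_0}$ with $\diamondsuit_\lambda(S)$ holding. The next step is to promote $\mathbf c$ to a pair $(\mathbf c, \mathcal G) \in \mathbf C_{\rm aut}^6$ in which each torsion-free abelian coordinate $\mathbb G_{\mathbf g}$ satisfies $\Hom(N_{\mathbf c}, \mathbb G_{\mathbf g}) = 0$; choosing the $\mathbb G_{\mathbf g}$ to be free abelian groups makes this immediate from the triviality of $\Hom(N_{\mathbf c}, \mathbb Z)$ just established. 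Theorem~\ref{int} then produces $\mathbf m \in \mathbf C_{\rm aut}^6$ with $H_{\mathbf c} \subseteq H_{\mathbf m}$, $|H_{\mathbf m}| = \lambda$, $H_{\mathbf m}/H_{\mathbf c}$ being $\lambda$-free, and every $g \in \aut(H_{\mathbf m})$ decomposing as $g = F^b_{\mathbf m} \circ g_f$ with $b \in L_{\mathbf c}$ and $f \in \Hom(N_{\mathbf c}, \mathcal Z(H_{\mathbf m}))$.

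The main obstacle is to force the $f$-coordinate in this normal form to be trivial, so that $F_{\mathbf m}: L_{\mathbf c} \to \aut(H_{\mathbf m})$ becomes surjective and $H_{\mathbf m}$ is the witness demanded by~$(a)$. By construction $\mathcal Z(H_{\mathbf m})$ is assembled from $\mathcal Z(H_{\mathbf c})$ together with the torsion-free pieces $\mathbb G_{\mathbf g}$: the contributions of $f$ into the $\mathbb G_{\mathbf g}$ quotients vanish by the design of $\mathcal G$, whereas any residual piece of $f$ landing in $\mathcal Z(H_{\mathbf c})$ corresponds via $F_{\mathbf c}$ to a central modification already realized by some element of $L_{\mathbf c}$ and can therefore be absorbed into a redefinition of~$b$. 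Making this absorption precise, and tracking carefully how the diamond-driven construction layers $\mathcal Z(H_{\mathbf m})$ over $\mathcal Z(H_{\mathbf c})$, is the technical heart of the deduction; however, the framework $\mathbf C_{\rm aut}^6$ developed in Section~1 is precisely designed to handle this bookkeeping, so once it is unpacked the argument closes.
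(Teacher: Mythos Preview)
Your overall architecture matches the paper's: $(a)\Rightarrow(b)$ is the easy cardinality computation you give, and $(b)\Rightarrow(a)$ runs Lemma~\ref{lem3}, extends $\mathbf c$ into $\mathbf C^6_{\rm aut}$, invokes the diamond available under $\mathbf V=\mathbf L$, applies Theorem~\ref{int}, and then argues that every $F_{\mathbf m}^b\circ g_f$ already lies in the image of $F_{\mathbf m}$. Two points deserve correction.

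First, there is a real gap in your promotion step. You propose to build $(\mathbf c,\mathcal G)\in\mathbf C^6_{\rm aut}$ by simply ``choosing the $\mathbb G_{\mathbf g}$ to be free abelian groups.'' That does not satisfy the closure requirements of Definition~\ref{def3}(4): clause~(c) there forces $\mathcal G$ to contain (up to isomorphism) \emph{every} one-generated structure $\mathbf g$ embeddable into $(\mathcal Z(H_{\mathbf c}),F_{\mathbf c}^\ell)_{\ell}$, and such $\mathbb G_{\mathbf g}$ are in general not free. The paper instead passes through Lemma~\ref{a22}(3) to land in $\mathbf C^4_{\rm aut}$ and then invokes Lemma~\ref{t11}, which manufactures $\mathcal G$ out of one-generated pure $L_{\mathbf c}$-closed torsion-free subgroups of $\mathcal Z(H_{\mathbf c})$ and records (in $(\ast)_1$(c) of that proof) that $\Hom(N_{\mathbf c},\mathbb G_{\mathbf g})=0$ for each such $\mathbf g$. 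This is where the hypothesis $|\mathcal Z(H_{\mathbf c})|>2^{|L|+\aleph_0}$ is actually used, to guarantee $\mathcal G\neq\emptyset$.

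Second, your absorption argument is vaguer than it needs to be, and the paper's version is cleaner. By clause~$(\beta)$ of Theorem~\ref{5h.11} one already has $\Hom(N_{\mathbf c},\mathcal Z(H_{\mathbf m}))\subseteq\Hom(N_{\mathbf c},H_{\mathbf c})$, so every $g_f$ restricts to an automorphism of $H_{\mathbf c}$. Since $F_{\mathbf c}:L_{\mathbf c}\to\aut(H_{\mathbf c})$ is onto, $(F_{\mathbf m}^b\circ g_f)\restriction H_{\mathbf c}=F_{\mathbf c}^d$ for some $d\in L_{\mathbf c}$; the construction in Theorem~\ref{5h.11} (clause $(\ast)_0$(i)) then propagates this identity to all of $H_{\mathbf m}$, giving $F_{\mathbf m}^b\circ g_f=F_{\mathbf m}^d$ outright. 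No decomposition of $f$ into ``$\mathbb G_{\mathbf g}$-pieces'' versus ``$\mathcal Z(H_{\mathbf c})$-pieces'' is needed.
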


\section{Finding the  right framework}\label{1}

In this section, we  introduce several classes of  objects which will serve as a formal
framework for realizing a fixed group $L$ as an automorphism group.
\begin{notation}
	For a group $L$, by $e_L$ we mean the unit element. We denote  the group operation by $\cdot$, so that for two elements $\ell_1, \ell_2 \in L$, their product is denoted  by $\ell_1\cdot \ell_2$ or simply $\ell_1\ell_2$. By $\ell^{-1}$ we mean the inverse of $\ell\in L$. As usual, if $L$ is abelian we use the additive notation $(L,+,-,0)$. By a $p$-group, where $p$ is a prime number, we mean an abelian $p$-group, i.e., a group $G$
such that, for all $x \in G$, there exists some $n\ge 0$ with $p^n x=0$.
\end{notation}
\begin{notation}Let $H$ be a group which is not necessarily abelian.
\begin{enumerate}
\item[(1)] By $\mathcal{Z}(H)$  we mean the center of  $H$.
\item[(2)] Suppose $a \in H$. This induces an inner automorphism $a^*\in{\rm Inn}(H)$
defined by $a^*(x):=axa^{-1}$ for all $x \in H$. Thus ${\rm Inn}(H)=\{a^*\mid a \in H  \} \subseteq {\rm Aut}(H)$.
\item[(3)]The notation $\tor(H) $ stands for the full torsion subgroup
of $\mathcal{Z}(H)$.	
\item[(4)] A  group $G$ is pure in an abelian group $H$ if $G\subseteq H$ and
$nG = nH\cap G$ for every $n\in\mathbb{Z}$. The common notation for this notion
is $G\subseteq_\ast H$.\end{enumerate}
\end{notation}

We will pay respect to the fact that the center $\mathcal{Z}(G)$ of a group $G$
is abelian by giving preference to the additive notation on this specific subgroup of $G$.

\begin{definition}\label{a20}
Let $\mathbf C_{\rm{aut}}^+$ be the class of all tuples $\mathbf c= (L_{\mathbf c},H_{\mathbf c}, F_{\mathbf c})$ such that  $L_{\mathbf c}, H_{\mathbf c}$ are  groups and    $F_{\mathbf c}:L_{\mathbf c} \to {\rm Aut}(H_{\mathbf c})$ is an isomorphism.
Let also $\|\mathbf c\| := |L_{\mathbf c}| + |H_{\mathbf c}|$.
\end{definition}
Given a suitable group $L$, we are going to find when suitable arbitrarily large groups $H$ exist
such that for some isomorphism $F:L \to {\rm Aut}(H)$, the triple $(L, H, F)$ is in  $\mathbf C_{\rm{aut}}^+$.
For this reason, we define several new classes of objects, which in a sense weaken the above notion of $\mathbf C_{\rm{aut}}^+$ and will be better
understood in the sequel.
The main object in the following definition is the class
$\mathbf C_{\rm{aut}}^2$.
\begin{definition}\label{a2}
\begin{enumerate}
\item Let $\mathbf C_{\rm{aut}}^0$ be the class of all triples
$\mathbf c= (L_{\mathbf c},H_{\mathbf c}, F_{\mathbf c})$
such that
\begin{itemize}
\item[$(a)$]  $L_{\mathbf c}, H_{\mathbf c}$ are  groups, and

\item[$(b)$]   $F_{\mathbf c}:L_{\mathbf c} \to{\rm Aut}(H_{\mathbf c})$ is a group embedding.
\end{itemize}
We set
$F_{\mathbf c}^{\ell}:=F_{\mathbf c}({\ell}) $ for $\ell \in L_{\mathbf c}$ and  $N_{\mathbf c}:= H_{\mathbf c} / \mathcal{Z}(H_{\mathbf c}).$

\item Let $\mathbf C_{\rm{aut}}^1$ be the class of all tuples  $\mathbf c= (L_{\mathbf c},N_{\mathbf c},H_{\mathbf c},h_{\mathbf c},F_{\mathbf c},Q_{\mathbf c}^{\bar s})$ with $(L_{\mathbf c}, H_{\mathbf c}, F_{\mathbf c})\in \mathbf C_{\rm{aut}}^0$ such that:
\begin{itemize}
	\item[$(a)$]
 $N_{\mathbf c}$ is a normal subgroup of $L_{\mathbf c}$.
	\item[$(b)$]
  $h_{\mathbf c}: H_{\mathbf c} \to N_{\mathbf c}$ is an epimorphism with ${\rm Ker}(h_{\mathbf c}) = \mathcal{Z}(H_{\mathbf c})$.
	\item[$(c)$]
If $a \in H_{\mathbf c}$, then $F_{\mathbf c}(h_{\mathbf c}(a))=a^*$ is the inner automorphism of $H_{\mathbf c}$
	defined by $a^*(x)=axa^{-1}$ for all $x \in H_{\mathbf c}$, thus $N_{\mathbf c}\cong F_{\mathbf c}(N_{\mathbf c})={\rm Inn}(H_{\mathbf c}) \subseteq {\rm Aut}(H_{\mathbf c})$.
	\item[$(d)$] For all $n >0,\bar s = (s_1,\ldots,s_n) \in \mathbb Z^n$, $Q_{\mathbf c}^{\bar s}$ is an $n$-ary relation on $L_{\mathbf c}$.
	\item[$(e)$] For $\bar s \in \mathbb Z^n$, we have $(b_1,\dotsc,b_{n}) \in Q_{\mathbf c}^{\bar s}$ iff
	$\sum_{\ell=1}^{n} s_\ell F_{\mathbf c}^{b_\ell}\rest_{\mathcal{Z}(H_{\mathbf c})}=0$.
\end{itemize}

\item Let $\mathbf C_{\rm{aut}}^2$ be the class of all tuples  $\mathbf c= (L_{\mathbf c}, N_{\mathbf c}, H_{\mathbf c}, h_{\mathbf c}, h^*_{\mathbf c}, F_{\mathbf c}, Q_{\mathbf c}^{\bar s})$  such that   $(L_{\mathbf c},N_{\mathbf c}, H_{\mathbf c}, h_{\mathbf c}, F_{\mathbf c}, Q_{\mathbf c}^{\bar s})\in\mathbf C_{\rm{aut}}^1$ and
 $h^*_{\mathbf c}: N_{\mathbf c} \to H_{\mathbf c}$ is a map with $h_{\mathbf c} \circ h^*_{\mathbf c}= {\rm id}$.\footnote{
So, the map $h^*_{\mathbf c}$ assigns to each $b \in N_{\mathbf c}$ a preimage under $h_{\mathbf c}$ but may not be a homomorphism.}
\end{enumerate}
\end{definition}

\begin{definition}
\label{def2}
\begin{enumerate}
\item For $\mathbf c \in \mathbf C_{\rm{aut}}^1 \cup \mathbf C_{\rm{aut}}^2 $, let
$\text{res}_0(\mathbf c):=(L_{\mathbf c},H_{\mathbf c}, F_{\mathbf c}) \in \mathbf C_{\rm{aut}}^0$.

\item  For $\mathbf c \in \mathbf C_{\rm{aut}}^2$, let
 $\text{res}_1(\mathbf c):=(L_{\mathbf c},N_{\mathbf c},H_{\mathbf c},h_{\mathbf c},F_{\mathbf c},Q_{\mathbf c}^{\bar s}) \in \mathbf C_{\rm{aut}}^1$.
\end{enumerate}
\end{definition}

\begin{notation} (1)  The \emph{$m$-power torsion subgroup} of $G$
	is
 $$\Tor_m(G):=\{g\in \mathcal{Z}(G)\mid \exists n \ge 0 \mbox{ such that }m^{n}g=0\}.$$

(2)
Let $ \mathbb{P}$ denote the set of all prime numbers and let $p\in \mathbb{P}$.
Recall that $J_p:=\widehat{\mathbb{Z}}_p$, the ring of $p$-adic integers, is the completion of $ \mathbb{Z}$ in the $p$-adic topology.
\end{notation}

\begin{fact}
We have $\Tor_m(G) = \bigoplus \{\Tor_p(G) \mid p \emph{ is a prime factor of m}\}$.
\end{fact}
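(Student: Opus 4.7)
The plan is to prove the Fact as an instance of primary decomposition, restricted to the $m$-power torsion. Since $\Tor_m(G) \subseteq \mathcal{Z}(G)$ and each $\Tor_p(G) \subseteq \mathcal{Z}(G)$, everything takes place inside an abelian group, so I will switch to additive notation and use standard integer arithmetic. Write $m = p_1^{a_1} \cdots p_r^{a_r}$ for the prime factorization, so the primes dividing $m$ are exactly $p_1, \ldots, p_r$.

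First I would dispose of the easy inclusion $\sum_{i=1}^r \Tor_{p_i}(G) \subseteq \Tor_m(G)$: if $g \in \Tor_{p_i}(G)$ with $p_i^k g = 0$ and $p_i \mid m$, then $p_i^k \mid m^k$, hence $m^k g = 0$ and $g \in \Tor_m(G)$. For the reverse inclusion, given $g \in \Tor_m(G)$ with $m^n g = 0$, set $q_i := m^n / p_i^{n a_i}$. Then $\gcd(q_1, \ldots, q_r) = 1$, so by B\'ezout there exist integers $c_1, \ldots, c_r$ with $\sum_i c_i q_i = 1$. Writing $g = \sum_i (c_i q_i) g$, each summand $g_i := c_i q_i g$ satisfies $p_i^{n a_i} g_i = c_i m^n g = 0$, and therefore $g_i \in \Tor_{p_i}(G)$.

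The only point requiring genuine care is that the sum is \emph{direct}. Suppose $g_1 + \cdots + g_r = 0$ with $g_i \in \Tor_{p_i}(G)$, and pick $n$ large enough so that $p_j^n g_j = 0$ for every $j$. For a fixed $i$, let $N_i := \prod_{j \neq i} p_j^n$. Then $N_i g_j = 0$ for every $j \neq i$, so applying $N_i$ to the relation yields $N_i g_i = 0$. Combined with $p_i^n g_i = 0$ and $\gcd(N_i, p_i^n) = 1$, B\'ezout again gives integers $a, b$ with $a N_i + b p_i^n = 1$, so $g_i = a N_i g_i + b p_i^n g_i = 0$. This argument, applied to each $i$, forces all $g_i$ to vanish, establishing the direct sum decomposition.

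There is no real obstacle here; the Fact is classical primary decomposition, and the entire argument is bookkeeping around two applications of B\'ezout's identity. The only minor thing to note is that one should verify at the outset that $\Tor_m(G)$ and each $\Tor_p(G)$ are indeed subgroups of $\mathcal{Z}(G)$ (which is immediate, since sums of $m$-power-torsion elements remain $m$-power-torsion by taking the max of the exponents).
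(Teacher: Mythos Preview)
Your proof is correct and follows essentially the same approach as the paper's: both arguments rest on the coprimality of the distinct prime powers dividing $m^n$, yielding trivial intersections and the sum decomposition. The paper's proof is simply much terser---it asserts that $\Tor_n(G)\cap\Tor_{n'}(G)=0$ for coprime $n,n'$ and that the sum equals $\Tor_m(G)$, then invokes an ``easy induction''---whereas you spell out both steps explicitly via B\'ezout's identity.
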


\begin{proof}
	Let  $n,n'$ be factors of $m$.
	If
	$(n,n')=1$, we observe that $\Tor_n(G)\cap\Tor_{n'}(G)=0$. Since
	$\Tor_m(G) = \sum \big\{\Tor_p(G)\, \big|\, p \mbox{ prime}, p | m\big\}$, an easy induction gives $\Tor_m(G) = \bigoplus \big\{\Tor_p(G)\, \big|\,  p \mbox{ prime}, p | m\big\}$.
\end{proof}
\begin{definition}\label{a7}
(1) Let  $\ell\in\{0,1,2\}$ and $H$ be a group.
We define  $\Prime_\ell(H)$ as the following subset of the set of prime numbers:
\begin{itemize}
	\item[$(a) $]  We say $p\in \Prime_1(H)$ if and only if $\Tor_p(H)\neq 0$.
	\item[$(b)$]  We say $p\in \Prime_2(H)$ if and only if there is an embedding $(J_p,+)\hookrightarrow \mathcal{Z}(H)$.
	\item[$(c)$]$\Prime_0(H):=\Prime_1(H)\cup \Prime_2(H)$.
 \end{itemize}
(2)  We say $H$ is \emph{$\mathbb{P}_*$-divisible}, where $\mathbb{P}_*$ is a set of prime numbers, if $ \mathcal{Z}(H)$ is $p$-divisible, for all $p \notin \mathbb{P}_*.$
\end{definition}
\begin{definition}
\label{def3}
\begin{enumerate}
\item Let $\mathbf C_{\rm{aut}}^3$ be the class of all  $\mathbf c= (L_{\mathbf c},N_{\mathbf c},H_{\mathbf c},h_{\mathbf c},h^*_{\mathbf c},F_{\mathbf c},Q_{\mathbf c}^{\bar s},H^\ast_{\mathbf c},\mathbb{P}_{\mathbf c})$ such that the following properties hold:
\begin{itemize}
	\item[$(a)$]  $\text{res}_2(\mathbf c):=(L_{\mathbf c},N_{\mathbf c},H_{\mathbf c},h_{\mathbf c},h^*_{\mathbf c},F_{\mathbf c},Q_{\mathbf c}^{\bar s})\in\mathbf C_{\rm{aut}}^2$.
	\item[$(b)$] $ \mathcal{Z}(H_{\mathbf c})$ is reduced.
\item[$(c)$]	$H^*_{\mathbf c}\subseteq H_{\mathbf c}$ is a subgroup, $H_{\mathbf c}=\bigcup\{H^*_{\mathbf c}x\mid x\in\mathcal{Z}(H_{\mathbf c})\},$  and $\mathcal{Z}(H^*_{\mathbf c})=\mathcal{Z}(H_{\mathbf c})\cap H^*_{\mathbf c}$,
hence 	 $H^*_{\mathbf c}$ is a normal subgroup of $H_{\mathbf c}$.
\item[$(d)$] $\mathbb{P}_{\mathbf c}:=\Prime_0(H_{\mathbf c})$, and we have:

$(d_1)$  If $p\in\Prime_1(H_{\mathbf c})$, then $\Tor_p(H^*_{\mathbf c})\neq 0$, and

$(d_2)$ if $p\in\Prime_2(H_{\mathbf c}),$ then $(J_p,+)$ embeds into  $\mathcal{Z}(H^*_{\mathbf c})$.
\end{itemize}

\item Let $\mathbf C_{\rm{aut}}^4$ be the class of all tuples $ \mathbf c\in  \mathbf C_{\rm{aut}}^3$ such that   $H_{\mathbf c}/H^\ast_{\mathbf c}=\mathcal{Z}(H_{\mathbf c})/ \mathcal{Z}(H^*_{\mathbf c})$ is a torsion-free abelian group which is $\mathbb{P}_\mathbf c$-divisible.

\item Let $\mathbf C_{\rm{aut}}^5$
be the class of all tuples
$(\mathbf c, \mathbf g)
$
such that  $\mathbf c\in  \mathbf C_{\rm{aut}}^4$,
and $\mathbf g:=(\mathbb{G}_{\mathbf g},F_{\mathbf g}^\ell)_{\ell\in L_{\mathbf c}}$ is defined by the following:
 \begin{itemize}

\item[$(a)$]   $ \mathbb{G}_{\mathbf g}$ is a reduced torsion-free abelian group.

\item[$(b)$] $F^\ell_{\mathbf g}\in\aut(\mathbb{G}_{\mathbf g})$
such that for all $n>0, \bar{s}\in\mathbb{Z}^n$ and $(b_1,\dotsc,b_{n})\in (L_{\mathbf c})^n$:  if
 $(b_1,\dotsc,b_{n}) \in Q_{\mathbf c}^{\bar s}$, then
$\sum_{\ell=1}^n s_\ell F_{\mathbf g}^{b_\ell}\rest_{\mathbb{G}_{\mathbf g}}=0$.

 \item[$(c)$] For all primes $p$, $J_p$ does not embed into $\mathbb{G}_{\mathbf g}$.
\end{itemize}
\item Let $\mathbf C_{\rm{aut}}^6$
be the class of all tuples
$(\mathbf c, \mathcal{G})
$ with $\mathbf c\in  \mathbf C_{\rm{aut}}^4$
such that: \begin{itemize}
	
	\item[$(a)$]
 $\mathcal{G}$ is a non-empty set, and each member $\mathbf g \in \mathcal{G}$ is such that
$(\mathbf c, \mathbf g)\in \mathbf C_{\rm{aut}}^5$.

	\item[$(b)$]   If $\mathbf g\in \mathcal{G}$ and $x\in \mathbb{G}_{\mathbf g}$, then $\mathbf g\rest\text{cl}\{x\}\in\mathcal{G}$,
where $\text{cl}\{x\}$ is the smallest pure subgroup of $ \mathbb{G}_{\mathbf g}$ containing $x$ that is closed under all $F_{\bf g}^\ell$'s for $\ell \in L_{\bf c}.$

	\item[$(c)$] If $(\mathbf c, \mathbf g)\in \mathbf C_{\rm{aut}}^5$, $\mathbf g$ is embeddable into $(\mathcal{Z}(H_{\mathbf c}),F_{\mathbf c}^\ell)_{\ell\in L_{\mathbf c}}$, and there is some
$x\in \mathbb{G}_{\mathbf g}$ so that $\mathbf g=\mathbf g\rest\text{cl}\{x\}$,  then $\mathbf g$ is isomorphic to some
member of $\mathcal{G}$.

\end{itemize}\item
Let   $\|(\mathbf c, \mathcal{G})\| := |L_{\mathbf c}| + |H_{\mathbf c}|+\aleph_0+\Sigma\{\|\mathbf g\|\mid \mathbf g\in \mathcal{G}\}$.
\end{enumerate}
\end{definition}

\begin{lemma}
\label{a22}The following assertions are valid:
\begin{enumerate}
\item  If  $(\mathbf c, \mathbf g):=(\mathbf c, (\mathbb{G}_{\mathbf g},F_{\mathbf g}^\ell)_{\ell\in L_{\mathbf c}}) \in \mathbf C_{\rm{aut}}^5$, then  ${\mathbf n}:=({\mathbf c} \oplus (\mathbb{G}_{\mathbf g},F_{\mathbf g}^\ell)_{\ell\in L_{\mathbf c}}) \in \mathbf \mathbf C_{\rm{aut}}^4$, where
$\mathcal{Z}(H_{\mathbf n})=  \mathcal{Z}(H_{\mathbf c})\oplus \mathbb{G}_{\mathbf g}$,
and $F_{{\mathbf n}}^\ell$ is the unique extension to an automorphism of $H_{\mathbf n}$ extending $F_{\mathbf c}^\ell \cup F^\ell_{\mathbf g}$.

\item Similarly, if $(\mathbf c, \mathcal{G}) \in \mathbf C_{\rm{aut}}^6,$ then
 ${\mathbf n}:=({\mathbf c} \oplus \bigoplus_{\mathbf g \in \mathcal{G}}(\mathbb{G}_{\mathbf g},F_{\mathbf g}^\ell)_{\ell\in L_{\mathbf c}})) \in \mathbf \mathbf C_{\rm{aut}}^4$, where
$\mathcal{Z}(H_{\mathbf n})=  \mathcal{Z}(H_{\mathbf c})\oplus \bigoplus_{\mathbf g \in \mathcal{G}} \mathbb{G}_{\mathbf g}$,
and $F_{{\mathbf n}}^\ell$ is the unique extension to an automorphism of $H_{\mathbf n}$ extending $F_{\mathbf c}^\ell \cup \bigcup_{\mathbf g \in \mathcal{G}} F^\ell_{\mathbf g}$.

\item If	$\mathbf d\in\mathbf C_{\rm{aut}}^+$,
then there is 	$\mathbf c\in\mathbf C_{\rm{aut}}^4$ with $\rm{res}_0(\mathbf c) =\mathbf d$ and $|H^\ast_{\mathbf c}| \leq (|L_\mathbf c|+\aleph_0)^{\aleph_0}$.
\end{enumerate}
	\end{lemma}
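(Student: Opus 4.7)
The approach splits naturally into two kinds of moves: parts (1) and (2) extend a given $\mathbf c\in\mathbf C^4_{\rm aut}$ by gluing new central torsion-free abelian material, while (3) carves a small subgroup $H^*_{\mathbf c}$ out of a given $\mathbf d\in\mathbf C^+_{\rm aut}$.

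For part (1), I take $H_{\mathbf n}:=H_{\mathbf c}\times\mathbb{G}_{\mathbf g}$ (direct product), in which $\mathbb{G}_{\mathbf g}$ becomes a new central abelian summand, and equip it with the coordinatewise action $F_{\mathbf n}^\ell:=F_{\mathbf c}^\ell\times F_{\mathbf g}^\ell$, the quotient $h_{\mathbf n}(x,y):=h_{\mathbf c}(x)$, the section $h^*_{\mathbf n}(b):=(h^*_{\mathbf c}(b),0)$, the distinguished subgroup $H^*_{\mathbf n}:=H^*_{\mathbf c}\times\mathbb{G}_{\mathbf g}$, and $\mathbb{P}_{\mathbf n}:=\mathbb{P}_{\mathbf c}$. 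Then $\mathcal{Z}(H_{\mathbf n})=\mathcal{Z}(H_{\mathbf c})\oplus\mathbb{G}_{\mathbf g}$, $H_{\mathbf n}/\mathcal{Z}(H_{\mathbf n})\cong N_{\mathbf c}$, and $H_{\mathbf n}/H^*_{\mathbf n}\cong H_{\mathbf c}/H^*_{\mathbf c}$, so reducedness of the new center together with torsion-freeness and $\mathbb{P}_{\mathbf c}$-divisibility of the quotient all transfer directly. The only nontrivial verification is the inner-automorphism compatibility, clause~(c) of Definition~\ref{a2}(2): for $a=(x,y)\in H_{\mathbf n}$, the conjugation $a^*$ is trivial on the $\mathbb{G}_{\mathbf g}$-coordinate, so I must prove $F_{\mathbf g}^{h_{\mathbf c}(x)}\rest\mathbb{G}_{\mathbf g}={\rm id}$. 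Since the inner automorphism of $x$ in $H_{\mathbf c}$ fixes $\mathcal{Z}(H_{\mathbf c})$ pointwise, the pair $(h_{\mathbf c}(x),e_L)$ belongs to $Q_{\mathbf c}^{(1,-1)}$, and clause~(b) of Definition~\ref{def3}(3) delivers exactly that identity on $\mathbb{G}_{\mathbf g}$. The equality $\mathbb{P}_{\mathbf n}=\mathbb{P}_{\mathbf c}$ follows because torsion-freeness of $\mathbb{G}_{\mathbf g}$ preserves $\Prime_1$, while $\Prime_2$-preservation uses the pure-injectivity of $J_p$ together with clause~(c) of Definition~\ref{def3}(3). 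Part (2) is the same construction with $\bigoplus_{\mathbf g\in\mathcal{G}}\mathbb{G}_{\mathbf g}$ replacing $\mathbb{G}_{\mathbf g}$; the inner-automorphism check is unchanged, and the $\Prime_2$ analysis again uses clause~(c) of Definition~\ref{def3}(3) applied to each $\mathbf g$.

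For part (3), set $L_{\mathbf c}:=L_{\mathbf d}$, $H_{\mathbf c}:=H_{\mathbf d}$, $F_{\mathbf c}:=F_{\mathbf d}$, $N_{\mathbf c}:=H_{\mathbf c}/\mathcal{Z}(H_{\mathbf c})$ with $h_{\mathbf c}$ the canonical projection, fix any set-theoretic section $h^*_{\mathbf c}$, and let $Q_{\mathbf c}^{\bar s}$ be determined by clause~(e) of Definition~\ref{a2}(2). From $N_{\mathbf c}\cong{\rm Inn}(H_{\mathbf c})\leq{\rm Aut}(H_{\mathbf c})$ one has $|{\rm Rang}(h^*_{\mathbf c})|\leq|L_{\mathbf c}|$. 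I build $H^*_{\mathbf c}$ as a countable ascending union $\bigcup_n H^*_{\mathbf c,n}$: $H^*_{\mathbf c,0}$ is the subgroup generated by ${\rm Rang}(h^*_{\mathbf c})$ together with a single nonzero element of $\Tor_p(\mathcal{Z}(H_{\mathbf c}))$ for each $p\in\Prime_1(H_{\mathbf c})$ and a fixed embedded copy of $(J_p,+)\subseteq\mathcal{Z}(H_{\mathbf c})$ for each $p\in\Prime_2(H_{\mathbf c})$; at each stage $n+1$ one adjoins witnesses in $\mathcal{Z}(H_{\mathbf c})$ forcing $\mathcal{Z}(H_{\mathbf c})/\mathcal{Z}(H^*_{\mathbf c,n+1})$ to be torsion-free and $\mathbb{P}_{\mathbf c}$-divisible. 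The condition $\mathcal{Z}(H^*_{\mathbf c})=\mathcal{Z}(H_{\mathbf c})\cap H^*_{\mathbf c}$ is then automatic: once $H^*_{\mathbf c}$ surjects onto $N_{\mathbf c}$, any $z\in H^*_{\mathbf c}$ centralizing $H^*_{\mathbf c}$ also centralizes $H^*_{\mathbf c}\cdot\mathcal{Z}(H_{\mathbf c})=H_{\mathbf c}$.

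The principal obstacle will be the cardinality estimate $|H^*_{\mathbf c}|\leq(|L_{\mathbf c}|+\aleph_0)^{\aleph_0}$: the countably many $J_p$-hulls contribute $2^{\aleph_0}$, the section contributes $\leq|L_{\mathbf c}|$, and the remaining task is to bound the $p$-torsion of $\mathcal{Z}(H_{\mathbf c})$ (for $p\in\Prime_1$) and the $\mathbb{F}_p$-spaces $\mathcal{Z}(H_{\mathbf c})/p\mathcal{Z}(H_{\mathbf c})$ (for $p\notin\mathbb{P}_{\mathbf c}$) by $(|L_{\mathbf c}|+\aleph_0)^{\aleph_0}$. These bounds follow from the ``twist-by-$c^{g(x)}x$'' automorphism-counting argument standard in this setting, whereby each nontrivial relevant character of $\mathcal{Z}(H_{\mathbf c})$ produces, after suitable choice of a target element inside $\mathcal{Z}(H_{\mathbf c})$, a distinct automorphism of $H_{\mathbf c}$, so the cardinalities in question are controlled by $|{\rm Aut}(H_{\mathbf c})|=|L_{\mathbf c}|$. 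Aggregating these estimates across countably many closure stages keeps $|H^*_{\mathbf c}|$ within the target bound.
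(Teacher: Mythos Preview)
Your approach for parts (1) and (2) coincides with the paper's: the proof there simply records the coordinatewise formula $F_{\mathbf n}^\ell(a,b)=(F_{\mathbf c}^\ell(a),F_{\mathbf g}^\ell(b))$, draws the corresponding commutative diagram, and asserts that ``the desired claims follow easily.'' You in fact carry out considerably more of the verification (the inner-automorphism clause via $Q^{(1,-1)}_{\mathbf c}$, the $\Prime_2$-preservation via clause~(c) of Definition~\ref{def3}(3)) than the paper does.

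For part (3), the paper's entire proof is the phrase ``These are easy,'' so there is nothing to compare against and your sketch supplies far more than what is written. Two points deserve attention, however. First, membership in $\mathbf C^3_{\rm aut}$ (hence $\mathbf C^4_{\rm aut}$) requires $\mathcal{Z}(H_{\mathbf c})$ to be reduced (Definition~\ref{def3}(1)(b)); since $\text{res}_0(\mathbf c)=\mathbf d$ forces $H_{\mathbf c}=H_{\mathbf d}$, this is a constraint on $\mathbf d$ that neither you nor the paper addresses. Second, your appeal to the twist-automorphism count to bound $|\mathcal{Z}(H_{\mathbf c})/p\mathcal{Z}(H_{\mathbf c})|$ for $p\notin\mathbb P_{\mathbf c}$ is not obviously available: the twist $x\mapsto c^{g(x)}x$ with $g$ valued in $\mathbb Z/p$ needs an element $c$ of order $p$ in $\mathcal Z(H_{\mathbf c})$, but $p\notin\Prime_1(H_{\mathbf c})$ rules that out; likewise, bounding $|\Tor_p(\mathcal Z(H_{\mathbf c}))|$ in this manner presupposes a nonzero map $N_{\mathbf c}\to\mathbb Z/p$, which need not exist. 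These are genuine soft spots, though they coincide exactly with places where the paper itself offers no argument.
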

\begin{proof}
(1) It is enough to set $F_{{\mathbf n}}^\ell(a,b):=(F^\ell_{\mathbf c}(a),F^\ell_{\mathbf g}(b))$. This fits in  the following commutative diagram:
$$\xymatrix{
	& H_{\mathbf c} \ar[r]^{\subseteq}   & H_{\mathbf n}  &\mathbb{G}_{\mathbf g}\ar[l]_{\subseteq}
	\\& H_{\mathbf c} \ar[u]^{F^\ell_{\mathbf c}}\ar[r]^{\subseteq}    &H_{\mathbf n}\ar[u]^{F_{\mathbf n}^\ell}&\mathbb{G}_{\mathbf g}\ar[u]_{F_{\mathbf g}^\ell}\ar[l]_{\supseteq},
	&&}$$ and the desired claims follow easily.

(2)+(3): These are easy.
\end{proof}
\begin{definition}\label{2b.4} \mbox{}
(1):  Let $j\in\{2,3,4,5,6\}$. For any $\mathbf c \in \mathbf C_{\rm{aut}}^j$, let $\mathbf M_{\mathbf c} \subseteq \mathbf C_{\rm{aut}}^j$
be the class of all $\mathbf m \in \mathbf C_{\rm{aut}}^j$ such that the following hold:
\begin{itemize}
\item[$(a)$]  $L_{\mathbf m}=L_{\mathbf c}$ and $N_{\mathbf m}=N_{\mathbf c}$.
\item[$(b)$]    $H_{\mathbf c}\subseteq H_{\mathbf m}$ and $h_{\mathbf c} \subseteq h_{\mathbf m}$.
\item[$(c)$]  $h^*_{\mathbf m}= h^*_{\mathbf c}$.
\item[$(d)$]  $F_{\mathbf c}^{\ell}\subseteq F_{\mathbf m}^{\ell}$ for all $\ell \in L_{\mathbf c}$. Let us depict things:	$$\xymatrix{
	& H_{\mathbf m}\ar[r]^{h_{\mathbf m}}&N_{\mathbf m}\ar[r]^{h^\ast_{\mathbf m}}&H_{\mathbf m}\ar[r]^{F_{\mathbf m}^{\ell}}&H_{\mathbf m}	\\&  H_{\mathbf c}\ar[r]^{ h_{\mathbf c}}\ar[u]^{\subseteq}&N_{\mathbf c}\ar[u]^{=}\ar[r]^{h^\ast_{\mathbf c}}&H_{\mathbf c}\ar[u]^{\subseteq} \ar[r]^{F_{\mathbf c}^{\ell}}&H_{\mathbf c}\ar[u]_{\subseteq}
	&&&}$$
\item[$(e)$]  $Q_{\mathbf m}^{\bar s}= Q_{\mathbf c}^{\bar s}$ for all $\bar s \in \bigcup_{n>0} \mathbb Z^{n}$.
\item[$(f)$]  The group $H_{\mathbf m}$ is generated by the set $\mathcal{Z}(H_{\mathbf m}) \cup H_{\mathbf c}$.
\item[$(g)$]  $\mathcal{Z}(H_{\mathbf c}) = \mathcal{Z}(H_{\mathbf m}) \cap H_{\mathbf c}$.
\item[$(h)$] If $j\geq 3$ then
$\mathbb{P}_{\mathbf m}=\mathbb{P}_{\mathbf c}$ and
$H^\ast_{\mathbf m}=H^\ast_{\mathbf c}$.
\item[$(i)$]  If $j=5$, then ${\mathbf g}_{{\mathbf m}}={\mathbf g}_{{\mathbf c}}$.
\item[$(j)$]If $j=6$, then $\mathcal{G}_{{\mathbf m}}=\mathcal{G} _{{\mathbf c}}$.
\end{itemize}

(2) Let $\mathbf c \in \mathbf C_{\rm{aut}}^j$ and $\mathbf m \in \mathbf M_{\mathbf c}$.
We call $\mathbf m $ \emph{auto-rigid},
if $\operatorname{Im} (F_{\mathbf m}) = {\rm Aut}(H_{\mathbf m})$.
\end{definition}

\begin{definition}\label{2b.11} Let $j\in\{2,3,4,5,6\}$.
	For $\mathbf c \in \mathbf C_{\rm{aut}}^j$ let relation $\le_{\mathbf c}$ on $\mathbf M_{\mathbf c}$
	be defined by $\mathbf {m_1} \le_{\mathbf c} \mathbf {m_2}$ if and only if  $H_{\mathbf {m_1}}\subseteq H_{\mathbf {m_2}}$,
	$h_{\mathbf {m_1}} \subseteq h_{\mathbf {m_2}}$, and $F_{\mathbf {m_1}}^{\ell}\subseteq F_{\mathbf {m_2}}^{\ell}$ for all $\ell \in L_{\mathbf c}$.
\end{definition}
We have the following easy observations.
\begin{remark}\label{2b.12}   Let $j\in \{2,3,4,5,6\}$ and $\mathbf c \in \mathbf C_{\rm{aut}}^j$.
	\begin{itemize}
		\item[$(1)$] The pair $(\mathbf M_{\mathbf c}, \le_{\mathbf c})$ is a poset, $\mathbf c \in \mathbf M_{\mathbf c}$ and
		$\mathbf c \le_{\mathbf c} \mathbf m$ for all $\mathbf m \in \mathbf M_{\mathbf c}$.
		\item[$(2)$] If $\mathbf m_1, \mathbf m_2 \in \mathbf M_{\mathbf c}$ with $\mathbf {m_1} \le_{\mathbf c} \mathbf {m_2}$,
		then $\mathcal{Z}(H_{\mathbf m_1}) = \mathcal{Z}(H_{\mathbf m_2}) \cap H_{\mathbf m_1}$.
		\item[$(3)$] Suppose $\delta$ is a limit ordinal and $( \mathbf m_\alpha\mid \alpha < \delta      )$
		is a $\leq_{\mathbf c}$-increasing sequence from $\mathbf M_{\mathbf c}$. Then there exists $\mathbf m=\bigcup_{\alpha < \delta}\mathbf m_\alpha \in \mathbf M_{\mathbf c}$
		which is the $\le_{\mathbf c}$-least upper bound of the sequence  $( \mathbf m_\alpha\mid \alpha < \delta      )$.
	\end{itemize}
\end{remark}

\section{Some explicit automorphisms}
Let $j\in\{ 2,3,4,5,6\}$, $\mathbf c \in \mathbf C_{\rm{aut}}^j$, and take $\mathbf m \in \mathbf M_{\mathbf c}$. Also, let $f\in{\rm Hom}(N_{\mathbf c}, \mathcal{Z}(H_{\mathbf m}))$. In this section we study the induced map $g_f(x)$ and its variations.
This map plays a crucial role in our main theorem.
\begin{definition}\label{2b.13} Let $j\in\{2,3,4,5,6\}$,  $\mathbf c \in \mathbf C_{\rm{aut}}^j$, and $\mathbf m \in \mathbf M_{\mathbf c}$.
	\begin{itemize}
	\item[$(1)$] Let ${\mathcal A}_{\mathbf m}$ be the set of all homomorphisms $g \in {\rm Aut}(H_{\mathbf m})$ such that
 $g \rest_{\mathcal{Z}(H_{\mathbf m})}= \id$ and $g(x) \in x\cdot \mathcal{Z}(H_{\mathbf m})$ for all $x \in H_{\mathbf m}$.
	\item[$(2)$] Set ${\mathcal F}_{\mathbf m}:= {\rm Hom}(N_{\mathbf c}, \mathcal{Z}(H_{\mathbf m}))$.	
	\item[$(3)$] Let $x \in H_{\mathbf m}$ and  $f \in {\mathcal F}_{\mathbf m}$. The assignment $x \mapsto x  f(h_{\mathbf m}(x))$
	defines a map
  $g_f:H_{\mathbf m} \to H_{\mathbf m}$.
\end{itemize}
\end{definition}

\begin{proposition}\label{2b.16}
We have ${\mathcal A}_{\mathbf m} = \{g_f\mid f \in {\mathcal F}_{\mathbf m}\}$
for all $\mathbf c \in \mathbf C_{\rm{aut}}^j$ and $\mathbf m \in \mathbf M_{\mathbf c}$.
\end{proposition}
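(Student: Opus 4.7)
The plan is to prove the two inclusions $\{g_f \mid f \in \mathcal{F}_{\mathbf m}\} \subseteq \mathcal{A}_{\mathbf m}$ and $\mathcal{A}_{\mathbf m} \subseteq \{g_f \mid f \in \mathcal{F}_{\mathbf m}\}$ separately. For the first direction, fix $f \in \Hom(N_{\mathbf c}, \mathcal{Z}(H_{\mathbf m}))$. The central observation, used throughout, is that $f(h_{\mathbf m}(x))$ lies in $\mathcal{Z}(H_{\mathbf m})$ and hence commutes with every element of $H_{\mathbf m}$. This immediately gives the homomorphism property: since $h_{\mathbf m}$ is a group homomorphism,
\[
g_f(xy) \;=\; xy \cdot f(h_{\mathbf m}(x)) \cdot f(h_{\mathbf m}(y)) \;=\; x \cdot f(h_{\mathbf m}(x)) \cdot y \cdot f(h_{\mathbf m}(y)) \;=\; g_f(x)\, g_f(y),
\]
after pulling the central element $f(h_{\mathbf m}(x))$ past $y$. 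The identity condition $g_f \rest_{\mathcal{Z}(H_{\mathbf m})} = \id$ is immediate from $\Ker(h_{\mathbf m}) = \mathcal{Z}(H_{\mathbf m})$, and $g_f(x) \in x \cdot \mathcal{Z}(H_{\mathbf m})$ is built into the definition. For bijectivity, $g_f(x) = g_f(y)$ forces $y^{-1}x \in \mathcal{Z}(H_{\mathbf m})$, hence $h_{\mathbf m}(x) = h_{\mathbf m}(y)$, and then cancellation yields $x = y$; surjectivity follows by exhibiting the explicit preimage $x := y \cdot f(h_{\mathbf m}(y))^{-1}$, which has $h_{\mathbf m}(x) = h_{\mathbf m}(y)$ since the correction lies in $\Ker(h_{\mathbf m})$, and hence satisfies $g_f(x) = y$.

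For the reverse inclusion, given $g \in \mathcal{A}_{\mathbf m}$ I would define $\phi : H_{\mathbf m} \to \mathcal{Z}(H_{\mathbf m})$ by $\phi(x) := x^{-1} g(x)$; this is well-defined and central-valued by the defining condition $g(x) \in x \cdot \mathcal{Z}(H_{\mathbf m})$. The key step is to check that $\phi$ descends through $h_{\mathbf m}$, i.e., depends only on the coset $x \, \mathcal{Z}(H_{\mathbf m})$. If $y = xz$ with $z \in \mathcal{Z}(H_{\mathbf m})$, then $g(z) = z$ gives $\phi(y) = z^{-1} \phi(x) z = \phi(x)$ since $\phi(x)$ is central. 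Hence there is a unique $f : N_{\mathbf c} \to \mathcal{Z}(H_{\mathbf m})$ with $f \circ h_{\mathbf m} = \phi$, and the relation $g = g_f$ follows tautologically from $g(x) = x \cdot \phi(x)$.

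It then remains to verify that $f$ is a group homomorphism, which by surjectivity of $h_{\mathbf m}$ reduces to showing $\phi(xy) = \phi(x)\phi(y)$ for all $x,y \in H_{\mathbf m}$. This again uses $g$ being a homomorphism together with the centrality of $\phi$-values:
\[
\phi(xy) \;=\; (xy)^{-1} g(x) g(y) \;=\; y^{-1} x^{-1} \cdot x\phi(x) \cdot y \phi(y) \;=\; y^{-1} \phi(x) y \cdot \phi(y) \;=\; \phi(x) \phi(y).
\]
I do not anticipate any genuine obstacle here: the proposition records that $\mathcal{A}_{\mathbf m}$ is canonically parametrized by $\Hom(N_{\mathbf c}, \mathcal{Z}(H_{\mathbf m}))$, with inverse map $g \mapsto \bigl( h_{\mathbf m}(x) \mapsto x^{-1} g(x) \bigr)$. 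The only care required is the systematic invocation of centrality to justify each reshuffling of factors, and the book-keeping that $h_{\mathbf m}$ is an epimorphism with kernel exactly $\mathcal{Z}(H_{\mathbf m})$, which is guaranteed by $\mathbf m \in \mathbf M_{\mathbf c} \subseteq \mathbf C_{\rm{aut}}^1$.
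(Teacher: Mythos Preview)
Your proof is correct and follows essentially the same approach as the paper: both directions are argued via the same map $x \mapsto x^{-1}g(x)$ and the same centrality/kernel bookkeeping. The only cosmetic difference is that the paper establishes bijectivity of $g_f$ by verifying directly that $g_{-f}$ is a two-sided inverse, whereas you argue injectivity and surjectivity separately; both are equally short and yield the same result.
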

\begin{proof}  First suppose that $f \in {\mathcal F}_{\mathbf m}$. We show that $g_f$ is in ${\mathcal A}_{\mathbf m}$. Clearly, the map $g_f$ is a homomorphism, as $f(h_{\mathbf m}(x)) \in \mathcal{Z}(H_{\mathbf m})$ and
the following equalities
\begin{eqnarray*}
g_f(x)g_f(y)  &=&  x f(h_{\mathbf m}(x))y
f(h_{\mathbf m}(y))  = xy f(h_{\mathbf m}(x)) f(h_{\mathbf m}(y))\\ &=& xy f(h_{\mathbf m}(xy))  = g_f(xy)
\end{eqnarray*}
hold  for all $x,y \in H_{\mathbf m}$. Furthermore, for all $x\in \mathcal{Z}(H_{\mathbf m})$, we have
$h_{\mathbf m}(x)=e_{N_{\mathbf c}}$, thus $g_f(x)=x f(h_{\mathbf m}(x))=x$, and hence $g_{ f }\rest_{\mathcal{Z}(H_{\mathbf m})} = {\rm id}$.
Finally, we note that $g_f$ is an automorphism. To show this, we claim that its inverse  is $g_{-f}$:
\begin{eqnarray*}
g_{-f}(g_f(x))   &=&  g_{-f}(x f(h_{\mathbf m}(x)) = g_{-f}(x) g_{-f}(f(h_{\mathbf m}(x)))\\ &=& g_{-f}(x) f(h_{\mathbf m}(x)) = x f(h_{\mathbf m}(x))^{-1} f(h_{\mathbf m}(x))  = x.
\end{eqnarray*}
Similarly, $g_f(g_{-f}(x)) =x$ for all $x \in H_{\mathbf m}$. This proves $g_f \in {\mathcal A}_{\mathbf m}$.
To see  the reverse inclusion, we assume $g \in {\mathcal A}_{\mathbf m}$. We must show that $g=g_f$ for some $f \in {\mathcal F}_{\mathbf m}$.
Define the map $f: N_{\mathbf c}\to \mathcal{Z}(H_{\mathbf m})$ by  $f(a) := x^{-1} g(x),$ where $a= h_{\mathbf m}^*(x)$. But, this may depend to the choice of $x$.
To show this  is a well-defined  map, assume that $h_{\mathbf m}(y)=a$ for any $y \in H_{\mathbf m}$, and we need to check $f(a)=y^{-1}g(y)$. Indeed,
 $[h_{\mathbf m}(x)=h_{\mathbf m}(y)]$ implies that $ xy^{-1} \in {\rm Ker}(h_{\mathbf m})=\mathcal{Z}(H_{\mathbf m})$. Thus, in view of
Definition~\ref{2b.13}(1) we observe that $g(xy^{-1})=xy^{-1}$. In other words, $x^{-1}g(x)=y^{-1}g(y)$, i.e., the map $f$ is well-define.
In order to show $f$ is a morphism,
let $a,b \in N_{\mathbf c}$ and choose $x,y\in H_{\mathbf m}$ so that $h_{\mathbf m}(x)=a$ and $h_{\mathbf m}(y)=b$. Then $h_{\mathbf m}(xy)=ab$,
$x^{-1}g(x) \in \mathcal{Z}(H_{\mathbf m})$ and also
\begin{eqnarray*}
f(ab) = (xy)^ {-1} g(xy) = y^{-1} \cdot x^{-1}g(x)\cdot g(y)=x^{-1}g(x)\cdot y^{-1}g(y)=f(a)f(b).
\end{eqnarray*}
Consequently, $f \in {\mathcal F}_{\mathbf m}$. Clearly,  $g=g_f$. The proof is now complete.
\end{proof}

\begin{notation}\label{3g.0} \mbox{}
The notation  $\boxtimes_{\mathbf d}:=\boxtimes_{({\mathbf c,\mathbf m,\bar s,\bar b,\pi})}$ stands for the  following hypotheses:
\begin{itemize}
\item[$(a)$] $\mathbf c \in \mathbf C_{\rm{aut}}^j$ for some $j\in\{2,3,4,5,6\}$ and $\mathbf m \in \mathbf M_{\mathbf c}$,
\item[$(b)$] $\bar s = (s_1,\ldots,s_n) \in \mathbb Z^{n}$ and
$\bar b = (b_1,\ldots,b_n) \in (L_{\mathbf c})^n$ for some $n >0$,
\item[$(c)$] $\pi \in {\rm End}(N_{\mathbf c})$.
\end{itemize}

\end{notation}
\begin{definition}\label{3g.1}
Let
${\mathbf d}:= ({\mathbf c,\mathbf m,\bar s,\bar b,\pi}) $ and assume $\boxtimes_{\mathbf d}$.
\begin{itemize}
	
\item[$(0)$]
	We define $h'^*_{\mathbf c,\pi} := h^*_{\mathbf c} \circ \pi$.
\item[$(1)$] Let ${\mathcal A}^{\bar s,\bar b}_{\mathbf m,\pi}$ be the set of all homomorphisms $g \in {\rm End}(H_{\mathbf m})$ such that
$g \rest_{\mathcal{Z}(H_{\mathbf m})}= F_{\mathbf m}^{\bar s,\bar b}:=\sum_{\ell =1}^n s_\ell(F_{\mathbf m}^{b_\ell}\rest_{\mathcal{Z}(H_{\mathbf m})})$,
and $g(x) \in h'^*_{\mathbf c,\pi}(h_{\mathbf m}(x))\cdot \mathcal{Z}(H_{\mathbf m})$ for all $x \in H_{\mathbf m}$.
\item[$(2)$] Let ${\mathcal F}^{\bar s,\bar b}_{\mathbf m,\pi}$ be the set of all
   functions $f:N_{\mathbf c}\to\mathcal{Z}(H_{\mathbf m})$ such that for all $a,b \in N_{\mathbf c}$ we have  $f(a)f(b) = F_{\mathbf m}^{\bar s,\bar b}(t')\cdot (t'')^{-1}\cdot f(ab),$
where $t',t'' \in \mathcal{Z}(H_{\mathbf c}) \subseteq \mathcal{Z}(H_{\mathbf m})$ are uniquely determined
by
 $t' h^*_{\mathbf c}(ab)=h^*_{\mathbf c}(a)h^*_{\mathbf c}(b)  $,  	 and $t'' h'^*_{\mathbf c,\pi}(ab)=h'^*_{\mathbf c,\pi}(a)h'^*_{\mathbf c,\pi}(b).$

\item[$(3)$] For any $f \in {\mathcal F}^{\bar s,\bar b}_{\mathbf m,\pi}$
let  $g^{\bar s,\bar b}_{\pi,f}(x):H_{\mathbf m} \to H_{\mathbf m}$ be the map defined by the assignment
$
x\mapsto F_{\mathbf m}^{\bar s,\bar b}(t)\cdot f(h_{\mathbf m}(x))\cdot h'^*_{\mathbf c,\pi}(h_{\mathbf m}(x))
$
for all $x \in H_{\mathbf m}$, where $t \in \mathcal{Z}(H_{\mathbf m})$ is uniquely determined
by $x = th^*_{\mathbf c}(h_{\mathbf m}(x))$.
\end{itemize}
\end{definition}

\begin{proposition}\label{3g.4}
For  ${\mathbf d}:= ({\mathbf c,\mathbf m,\bar s,\bar b,\pi}) $
let
$\boxtimes_{\mathbf d}$.
 Then  ${\mathcal A}^{\bar s,\bar b}_{\mathbf m,\pi} = \big\{g^{\bar s,\bar b}_{\pi,f}\mid f \in {\mathcal F}^{\bar s,\bar b}_{\mathbf m,\pi}\big\}$.
\end{proposition}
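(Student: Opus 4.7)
The plan is to follow closely the strategy of Proposition~\ref{2b.16}, inserting the bookkeeping forced by the non-trivial restriction $F_{\mathbf m}^{\bar s,\bar b}$ and by the failure of $h^*_{\mathbf c}$ and $h'^*_{\mathbf c,\pi}$ to be homomorphisms (measured by $t'$ and $t''$ respectively). The normal form used throughout is that because $H_{\mathbf m}$ is generated by $\mathcal{Z}(H_{\mathbf m}) \cup H_{\mathbf c}$ and $h^*_{\mathbf c}$ is a set-theoretic section of $h_{\mathbf c} \subseteq h_{\mathbf m}$, every $x \in H_{\mathbf m}$ has a unique presentation $x = t_x \cdot h^*_{\mathbf c}(h_{\mathbf m}(x))$ with $t_x \in \mathcal{Z}(H_{\mathbf m})$; this unique $t_x$ is the one appearing in Definition~\ref{3g.1}(3).

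For the forward inclusion $\{g^{\bar s,\bar b}_{\pi,f}\mid f \in {\mathcal F}^{\bar s,\bar b}_{\mathbf m,\pi}\} \subseteq {\mathcal A}^{\bar s,\bar b}_{\mathbf m,\pi}$, fix $f \in {\mathcal F}^{\bar s,\bar b}_{\mathbf m,\pi}$ and verify first that $g^{\bar s,\bar b}_{\pi,f}$ is a homomorphism. Writing $x = t_x h^*_{\mathbf c}(a)$ and $y = t_y h^*_{\mathbf c}(b)$ with $a = h_{\mathbf m}(x)$ and $b = h_{\mathbf m}(y)$, the relation $h^*_{\mathbf c}(a) h^*_{\mathbf c}(b) = t' h^*_{\mathbf c}(ab)$ yields $xy = t_x t_y t' h^*_{\mathbf c}(ab)$, so $t_{xy} = t_x t_y t'$. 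Using that each of $t_x, t_y, t', t''$ and the values of $f$ and of $F_{\mathbf m}^{\bar s,\bar b}$ lies in $\mathcal{Z}(H_{\mathbf m})$ and hence commutes freely with every other element, the identity $g^{\bar s,\bar b}_{\pi,f}(x) g^{\bar s,\bar b}_{\pi,f}(y) = g^{\bar s,\bar b}_{\pi,f}(xy)$ reduces, after cancelling the common tail $h'^*_{\mathbf c,\pi}(a) h'^*_{\mathbf c,\pi}(b)$, to precisely the cocycle condition $f(a) f(b) = F_{\mathbf m}^{\bar s,\bar b}(t') \cdot (t'')^{-1} \cdot f(ab)$ from Definition~\ref{3g.1}(2). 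The restriction $g^{\bar s,\bar b}_{\pi,f}\rest_{\mathcal{Z}(H_{\mathbf m})} = F_{\mathbf m}^{\bar s,\bar b}$ is obtained by specialising the cocycle condition at $a = b = e_{N_{\mathbf c}}$ to pin down $f(e_{N_{\mathbf c}})$ relative to $h^*_{\mathbf c}(e_{N_{\mathbf c}})$ and $h'^*_{\mathbf c,\pi}(e_{N_{\mathbf c}})$, while the coset condition is immediate from the defining formula.

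For the reverse inclusion, given $g \in {\mathcal A}^{\bar s,\bar b}_{\mathbf m,\pi}$, define $f : N_{\mathbf c} \to \mathcal{Z}(H_{\mathbf m})$ by $f(a) := h'^*_{\mathbf c,\pi}(a)^{-1} \cdot g(h^*_{\mathbf c}(a))$. This element is indeed central: the coset condition in Definition~\ref{3g.1}(1) places $g(h^*_{\mathbf c}(a))$ in $h'^*_{\mathbf c,\pi}(a) \cdot \mathcal{Z}(H_{\mathbf m})$, and conjugation by $h'^*_{\mathbf c,\pi}(a)$ fixes every central element. To verify the cocycle identity, apply $g$ to both sides of $h^*_{\mathbf c}(a) h^*_{\mathbf c}(b) = t' h^*_{\mathbf c}(ab)$, evaluate $g(t')$ via the restriction hypothesis, expand each $g(h^*_{\mathbf c}(\cdot))$ using the definition of $f$, and replace $h'^*_{\mathbf c,\pi}(a) h'^*_{\mathbf c,\pi}(b)$ by $t'' h'^*_{\mathbf c,\pi}(ab)$ to separate central from non-central factors; the required relation drops out. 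Finally, a direct computation on $x = t h^*_{\mathbf c}(a)$ gives $g(x) = g(t) g(h^*_{\mathbf c}(a)) = F_{\mathbf m}^{\bar s,\bar b}(t) \cdot f(a) \cdot h'^*_{\mathbf c,\pi}(a) = g^{\bar s,\bar b}_{\pi,f}(x)$.

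The main obstacle is purely bookkeeping: every product must be ordered so that the genuinely central factors can be commuted past the non-central section values $h^*_{\mathbf c}(\cdot)$ and $h'^*_{\mathbf c,\pi}(\cdot)$, and so that $t'$ and $t''$ appear on the correct side to match Definition~\ref{3g.1}(2). Once this reordering is organised, the entire proposition reduces mechanically to the defining cocycle condition of ${\mathcal F}^{\bar s,\bar b}_{\mathbf m,\pi}$, so no ingredient beyond careful algebra is required.
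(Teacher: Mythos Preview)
Your proposal is correct and follows essentially the same route as the paper's proof: verify the homomorphism property of $g^{\bar s,\bar b}_{\pi,f}$ by reducing to the cocycle identity, check the two defining conditions of ${\mathcal A}^{\bar s,\bar b}_{\mathbf m,\pi}$, and for the converse define $f(a)$ as the central discrepancy $g(h^*_{\mathbf c}(a)) h'^*_{\mathbf c,\pi}(a)^{-1}$ and run the same computations backwards. The only minor deviation is how you establish $g^{\bar s,\bar b}_{\pi,f}\rest_{\mathcal{Z}(H_{\mathbf m})}=F_{\mathbf m}^{\bar s,\bar b}$: you specialise the cocycle relation at $a=b=e_{N_{\mathbf c}}$ to pin down $f(e_{N_{\mathbf c}})$, whereas the paper first computes $g^{\bar s,\bar b}_{\pi,f}(h^*_{\mathbf c}(h_{\mathbf m}(x)))$ directly (its $t$-component is $e_{H_{\mathbf m}}$) and then cancels against $g^{\bar s,\bar b}_{\pi,f}(x)$ using the already-established homomorphism property; both arguments are equally short and valid.
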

\begin{proof} First, assume $f \in {\mathcal F}^{\bar s,\bar b}_{\mathbf m,\pi}$.
We shall prove that $g^{\bar s,\bar b}_{\pi,f} \in {\mathcal A}^{\bar s,\bar b}_{\mathbf m,\pi}$.
Let $x_1,x_2 \in H_{\mathbf m}$ and choose $t_i \in \mathcal{Z}(H_{\mathbf m})$ for $i\in \{1,2\}$ such that
$x_i = t_i h^*_{\mathbf c}(h_{\mathbf m}(x_i)).
$
We observe that $h_{\mathbf m}(x_i) \in N_{\mathbf c}$, and in view of  Definition~\ref{3g.1}(2), there are  $t',t'' \in \mathcal{Z}(H_{\mathbf c})$ such that
\begin{eqnarray}
h^*_{\mathbf c}(h_{\mathbf m}(x_1))h^*_{\mathbf c}(h_{\mathbf m}(x_2)) & = & t'h^*_{\mathbf c}(h_{\mathbf m}(x_1x_2)), \label{3g.4b}\\
h'^*_{\mathbf c,\pi}(h_{\mathbf m}(x_1))h'^*_{\mathbf c,\pi}(h_{\mathbf m}(x_2)) & = & t''h'^*_{\mathbf c,\pi}(h_{\mathbf m}(x_1x_2)) \quad\quad\quad\quad\quad\quad  \label{3g.4c}\\
f(h_{\mathbf m}(x_1))f(h_{\mathbf m}(x_2)) & = & F_{\mathbf m}^{\bar s,\bar b}(t')(t'')^{-1}f(h_{\mathbf m}(x_1x_2)). \label{3g.4d}\\
x_1x_2 & \stackrel{(2.1)}= & t't_1t_2 h^*_{\mathbf c}(h_{\mathbf m}(x_1x_2)).\end{eqnarray}
 Keeping in mind that \begin{eqnarray}
{\rm Rang}(F_{\mathbf m}^{\bar s,\bar b}), {\rm Rang}(f) \subseteq \mathcal{Z}(H_{\mathbf m}) \end{eqnarray} We deduce the following identities
\begin{eqnarray*}
g^{\bar s,\bar b}_{\pi,f}(x_1)g^{\bar s,\bar b}_{\pi,f}(x_2) & \stackrel{\ref{3g.1}(3)}= & F_{\mathbf m}^{\bar s,\bar b}(t_1) f(h_{\mathbf m}(x_1)) h'^*_{\mathbf c,\pi}(h_{\mathbf m}(x_1)) \cdot
F_{\mathbf m}^{\bar s,\bar b}(t_2) f(h_{\mathbf m}(x_2)) h'^*_{\mathbf c,\pi}(h_{\mathbf m}(x_2))\\
& \stackrel{(2.5)}= & F_{\mathbf m}^{\bar s,\bar b}(t_1)F_{\mathbf m}^{\bar s,\bar b}(t_2)\cdot f(h_{\mathbf m}(x_1))f(h_{\mathbf m}(x_2))\cdot
h'^*_{\mathbf c,\pi}(h_{\mathbf m}(x_1))h'^*_{\mathbf c,\pi}(h_{\mathbf m}(x_2))\\
& \stackrel{(2.3)}= & F_{\mathbf m}^{\bar s,\bar b}(t_1t_2) \cdot F_{\mathbf m}^{\bar s,\bar b}(t')(t'')^{-1}f(h_{\mathbf m}(x_1x_2)) \cdot t''h'^*_{\mathbf c,\pi}(h_{\mathbf m}(x_1x_2))\\
& \stackrel{(2.5)}= & F_{\mathbf m}^{\bar s,\bar b}(t')F_{\mathbf m}^{\bar s,\bar b}(t_1t_2) \cdot (t'')^{-1}t'' \cdot f(h_{\mathbf m}(x_1x_2)) \cdot h'^*_{\mathbf c,\pi}(h_{\mathbf m}(x_1x_2))\\
& = & F_{\mathbf m}^{\bar s,\bar b}(t't_1t_2) f(h_{\mathbf m}(x_1x_2)) h'^*_{\mathbf c,\pi}(h_{\mathbf m}(x_1x_2)) \\
& \stackrel{(2.4)+\ref{3g.1}(3)}= &   g^{\bar s,\bar b}_{\pi,f}(x_1x_2).
\end{eqnarray*}
This shows that $g^{\bar s,\bar b}_{\pi,f}$ is a homomorphism.
Next, observe that for any $x \in H_{\mathbf m}$, we have  $h^*_{\mathbf c}(h_{\mathbf m}(x))= e_{H_{\mathbf m}}\cdot h^*_{\mathbf c}(h_{\mathbf m}(h^*_{\mathbf c}(h_{\mathbf m}(x))))$  and
\begin{eqnarray} \label{3g.4f}
g^{\bar s,\bar b}_{\pi,f}(h^*_{\mathbf c}(h_{\mathbf m}(x))) & = & F_{\mathbf m}^{\bar s,\bar b}(e_{H_{\mathbf m}})\cdot f(h_{\mathbf m}(h^*_{\mathbf c}(h_{\mathbf m}(x))))\cdot h'^*_{\mathbf c,\pi}(h_{\mathbf m}(h^*_{\mathbf c}(h_{\mathbf m}(x))))\\
& = & f(h_{\mathbf m}(x)) h'^*_{\mathbf c,\pi}(h_{\mathbf m}(x)). \nonumber
\end{eqnarray}
Let us plugging $x := th^*_{\mathbf c}(h_{\mathbf m}(x))$.
This implies that
\begin{eqnarray*}
g^{\bar s,\bar b}_{\pi,f}(t)g^{\bar s,\bar b}_{\pi,f}(h^*_{\mathbf c}(h_{\mathbf m}(x))) & = & g^{\bar s,\bar b}_{\pi,f}(th^*_{\mathbf c}(h_{\mathbf m}(x)))
 =  g^{\bar s,\bar b}_{\pi,f}(x)=  F_{\mathbf m}^{\bar s,\bar b}(t) f(h_{\mathbf m}(x)) h'^*_{\mathbf c,\pi}(h_{\mathbf m}(x))\\
& \stackrel{(2.6)}= & F_{\mathbf m}^{\bar s,\bar b}(t) g^{\bar s,\bar b}_{\pi,f}(h^*_{\mathbf c}(h_{\mathbf m}(x))).
\end{eqnarray*}
Consequently, $g^{\bar s,\bar b}_{\pi,f}(t) = F_{\mathbf m}^{\bar s,\bar b}(t)$. Letting $t$ range over all of $\mathcal{Z}(H_{\mathbf m})$, we have $g^{\bar s,\bar b}_{\pi,f}(t) = F_{\mathbf m}^{\bar s,\bar b}(t)$ for all $t \in \mathcal{Z}(H_{\mathbf m})$. This proves $g^{\bar s,\bar b}_{\pi,f} \in {\mathcal A}^{\bar s,\bar b}_{\mathbf m,\pi}$.
For the reverse inclusion,  assume $g \in {\mathcal A}^{\bar s,\bar b}_{\mathbf m,\pi}$. We must show that $g=g^{\bar s,\bar b}_{\pi,f}$ for some $f \in {\mathcal F}_{\mathbf m, \pi}^{\bar s,\bar b}$.
To this end, we define a map $f: N_{\mathbf c}\to \mathcal{Z}(H_{\mathbf m})$ as follows. For any $a \in N_{\mathbf c}$ set  $f(a) := g(h^*_{\mathbf c}(a))h'^*_{\mathbf c,\pi}(a)^{-1}.$
With Definition~\ref{3g.1}(1)  we have $f(a) \in \mathcal{Z}(H_{\mathbf m})$.
Let $a,b \in N_{\mathbf c}$ and $t',t'' \in \mathcal{Z}(H_{\mathbf c})$ be chosen as in Definition~\ref{3g.1}(2). We then have
\begin{eqnarray*}
f(a)f(b) & = & g(h^*_{\mathbf c}(a))h'^*_{\mathbf c,\pi}(a)^{-1}  g(h^*_{\mathbf c}(b))h'^*_{\mathbf c,\pi}(b)^{-1}
 =  g(h^*_{\mathbf c}(a))  g(h^*_{\mathbf c}(b))h'^*_{\mathbf c,\pi}(b)^{-1}   h'^*_{\mathbf c,\pi}(a)^{-1}\\
& = & g(h^*_{\mathbf c}(a)h^*_{\mathbf c}(b))(h'^*_{\mathbf c,\pi}(a) h'^*_{\mathbf c,\pi}(b))^{-1} =   g(t' h^*_{\mathbf c}(ab))(t'' h'^*_{\mathbf c,\pi}(ab))^{-1}\\
& = & g(t')g(h^*_{\mathbf c}(ab))\cdot h'^*_{\mathbf c,\pi}(ab)^{-1}(t'')^{-1}   =  g(t')(t'')^{-1} \cdot g(h^*_{\mathbf c}(ab)) h'^*_{\mathbf c,\pi}(ab)^{-1}\\
& = & F_{\mathbf m}^{\bar s,\bar b}(t')(t'')^{-1}\cdot f(ab).
\end{eqnarray*}
Thanks to the definition, $f \in {\mathcal F}_{\mathbf m,\pi}^{\bar s,\bar b}$ follows. Next, let $x \in H_{\mathbf m}$ and $t \in \mathcal{Z}(H_{\mathbf m})$ be chosen as in
Definition~\ref{3g.1}(3). We then have
\begin{eqnarray*}
g(x)   =  g(th^*_{\mathbf c}(h_{\mathbf m}(x)))
& = &  g(t) g(h^*_{\mathbf c}(h_{\mathbf m}(x)))\\
& = & F_{\mathbf m}^{\bar s,\bar b}(t) \cdot g(h^*_{\mathbf c}(h_{\mathbf m}(x)))h'^*_{\mathbf c,\pi}(h_{\mathbf m}(x))^{-1}
\cdot h'^*_{\mathbf c,\pi}(h_{\mathbf m}(x))\\
& = & F_{\mathbf m}^{\bar s,\bar b}(t)\cdot f(h_{\mathbf m}(x))\cdot h'^*_{\mathbf c,\pi}(h_{\mathbf m}(x))   =   g^{\bar s,\bar b}_{\pi,f}(x),
\end{eqnarray*}
which shows $g=g^{\bar s,\bar b}_{\pi,f}$. This ends the proof.
\end{proof}
\begin{proposition}\label{3g.6} \mbox{} The following assertions are valid:
\begin{itemize}
\item [$(1)$] Let $f \in {\rm Hom}(H_{\mathbf m},\mathcal{Z}(H_{\mathbf m}))$ with
${\rm Im}(f) \subseteq {\rm Ker}(f)$, and take $x \in H_{\mathbf m}$. The assignment $x\mapsto g^1_f(x) := x \cdot f(x)$ defines an automorphism $g_f^1:H_{\mathbf m} \to H_{\mathbf m}$.
\item [$(2)$] Assume $\boxtimes_{\mathbf d}$ holds with $g \in {\mathcal A}^{\bar s,\bar b}_{\mathbf m,\pi}$. Then $h\in {\mathcal A}^{\bar s,\bar b}_{\mathbf m,\pi}$ iff $h(x)=g(x)\cdot f(x)$ for all $x \in H_{\mathbf m}$ and
for some $f \in {\rm Hom}(H_{\mathbf m},\mathcal{Z}(H_{\mathbf m}))$ with $\mathcal{Z}(H_{\mathbf m})\subseteq {\rm Ker}(f)$.
\end{itemize}

\end{proposition}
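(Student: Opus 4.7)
The plan is to verify (1) by checking directly that $g_f^1$ is a homomorphism and a bijection, using both hypotheses on $f$, namely that $f$ takes values in $\mathcal{Z}(H_{\mathbf m})$ and that ${\rm Im}(f)\subseteq {\rm Ker}(f)$. For (2), I will handle the iff by explicit construction in each direction, leveraging Definition~\ref{3g.1}(1).

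For (1), the identity $g_f^1(xy)=g_f^1(x)g_f^1(y)$ reduces to the requirement that $f(x)$ commute with $y$, which is immediate since $f(x)\in \mathcal{Z}(H_{\mathbf m})$. For injectivity: if $xf(x)=e$, then $x=f(x)^{-1}$ lies in ${\rm Im}(f)\subseteq {\rm Ker}(f)$, so $f(x)=e$, hence $x=e$. For surjectivity: given $y\in H_{\mathbf m}$, set $x:=y\cdot f(y)^{-1}$. Then $f(x)=f(y)\cdot f(f(y)^{-1})=f(y)$, since $f(y)\in {\rm Im}(f)\subseteq {\rm Ker}(f)$ forces $f(f(y)^{-1})=e$; therefore $g_f^1(x)=y\cdot f(y)^{-1}\cdot f(y)=y$.

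For the forward direction of (2), assume $g,h\in {\mathcal A}^{\bar s,\bar b}_{\mathbf m,\pi}$ and define $f:H_{\mathbf m}\to H_{\mathbf m}$ by $f(x):=g(x)^{-1}h(x)$. By Definition~\ref{3g.1}(1) both $g(x)$ and $h(x)$ lie in the same coset $h'^*_{\mathbf c,\pi}(h_{\mathbf m}(x))\cdot \mathcal{Z}(H_{\mathbf m})$, so writing each as $h'^*_{\mathbf c,\pi}(h_{\mathbf m}(x))\cdot z$ with $z\in \mathcal{Z}(H_{\mathbf m})$, the representatives cancel in $g(x)^{-1}h(x)$, leaving $f(x)\in \mathcal{Z}(H_{\mathbf m})$. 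Expanding $f(xy)=g(xy)^{-1}h(xy)=g(y)^{-1}g(x)^{-1}h(x)h(y)$ and using the centrality of $f(x)=g(x)^{-1}h(x)$ to commute it past $g(y)^{-1}$ yields $f(xy)=f(x)f(y)$. For $x\in \mathcal{Z}(H_{\mathbf m})$, both $g(x)$ and $h(x)$ equal $F_{\mathbf m}^{\bar s,\bar b}(x)$, so $f(x)=e$, giving $\mathcal{Z}(H_{\mathbf m})\subseteq {\rm Ker}(f)$. Conversely, given such an $f$, the map $h(x):=g(x)f(x)$ is a homomorphism because $f(x)$ is central, so $g(x)f(x)g(y)f(y)=g(x)g(y)f(x)f(y)=g(xy)f(xy)$; it agrees with $F_{\mathbf m}^{\bar s,\bar b}$ on $\mathcal{Z}(H_{\mathbf m})$ because $f$ vanishes there and $g$ already has this restriction; and $h(x)\in h'^*_{\mathbf c,\pi}(h_{\mathbf m}(x))\cdot \mathcal{Z}(H_{\mathbf m})$ is inherited from $g$ since $f(x)\in \mathcal{Z}(H_{\mathbf m})$. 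Hence $h\in {\mathcal A}^{\bar s,\bar b}_{\mathbf m,\pi}$.

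The arguments are essentially mechanical and there is no serious obstacle; the only subtle observation is the cancellation of the $h'^*_{\mathbf c,\pi}(h_{\mathbf m}(x))$ factors, which guarantees $f$ lands in $\mathcal{Z}(H_{\mathbf m})$, together with the systematic use of centrality of $f(x)$ to permute it past other factors in all homomorphism verifications.
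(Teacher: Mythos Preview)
Your proof is correct and follows essentially the same approach as the paper. For (1) the paper exhibits $g^1_{-f}$ as an explicit inverse rather than arguing injectivity and surjectivity separately, but this is a cosmetic difference; for (2) the paper merely says ``This is easy as well,'' so your explicit verification simply fills in what the paper omits.
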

\begin{proof}
(1):
The map $g_f^1$ is a homomorphism as $f(x) \in \mathcal{Z}(H_{\mathbf m})$ and
\begin{eqnarray*}
g_f^1(x)g_f^1(y) = x f(x)y f(y) = xy f(x)f(y) = xy f(xy)=g_f^1(xy)
\end{eqnarray*}
holds for all $x,y \in H_{\mathbf m}$. Since
${\rm Im}(f) \subseteq {\rm Ker}(f)$ we have
$f(f(-))= e_{H_{\mathbf m}}$.
Furthermore, $g_f^1$ is an automorphism with inverse $g_{-f}^1$:
\begin{eqnarray*}
g_f^1(g_{-f}^1(x))   &=&    g_f^1(x f(x)^{-1})
  =   x f(x)^{-1} f(x) f(f(x^{-1}))  =    x f(x)^{-1} f(x f(x)^{-1})  \\&=&   x f(x)^{-1} f(x) f(f(x^{-1}))
	 =   x f(x)^{-1} f(x)e_{H_{\mathbf m}}=  x.
\end{eqnarray*}
Similarly, $g_{-f}^1(g_f^1(x)) =x$ for all $x \in H_{\mathbf m}$.

(2): This is easy as well.
\end{proof}

We close this section by presenting  situations for which ${\mathcal A}^{\bar s,\bar b}_{\mathbf m,\pi} $ is  non-empty. 

\begin{definition}
	\label{k-free}
	An abelian group $\mathbb{G}$ is called \emph{$\aleph_1$-free} if every subgroup of $\mathbb{G}$ of cardinality
	$< \aleph_1$, i.e., every countable subgroup, is free. More generally, an abelian group $\mathbb{G}$ is called \emph{$\lambda$-free} if every subgroup of $\mathbb{G}$ of cardinality
	$< \lambda$ is free.
\end{definition}

\begin{remark}\label{3g.7}
Let
${\mathbf d}:= ({\mathbf c,\mathbf m,\bar s,\bar b,\pi}) $, assume $\boxtimes_{\mathbf d}$, and let  $\mathbf m_1, \mathbf m_2 \in \mathbf M_{\mathbf c}$ such that $\mathbf m_1\le_{\mathbf c} \mathbf m_2$. Then  the following assertions hold:
\begin{itemize}
\item [$(1)$] ${\mathcal F}^{\bar s,\bar b}_{\mathbf m_1,\pi} \subseteq {\mathcal F}^{\bar s,\bar b}_{\mathbf m_2,\pi}$.
\item [$(2)$] Let $f \in {\mathcal F}^{\bar s,\bar b}_{\mathbf m_1,\pi}$ be given and define the following groups:\begin{itemize}
	\item[$(a)$]
 $ I_1 := \big\langle t',F_{\mathbf c}^{\bar s,\bar b}(t')\mid t'= h^*_{\mathbf c}(a_1)h^*_{\mathbf c}(a_2)h^*_{\mathbf c}(a_1a_2)^{-1} \mbox{for some } a_1,a_2\in N_{\mathbf c} \big\rangle_{\mathcal{Z}(H_{\mathbf c})} $
\item[$(b)$] $I_2 := \langle I_1, {\rm Im}(f) \rangle \subseteq \mathcal{Z}(H_{\mathbf m_1}).$\end{itemize}
Also, let $\varphi \in {\rm Hom}(I_2, \mathcal{Z}(H_{\mathbf m_2}))$ be such that $\varphi \upharpoonright I_1= \rm{id}$.
Then $\varphi \circ f \in {\mathcal F}^{\bar s,\bar b}_{\mathbf m_2,\pi}$.
\item [$(3)$] If ${\mathcal A}^{\bar s,\bar b}_{\mathbf m_1,\pi} \ne \emptyset$, then ${\mathcal A}^{\bar s,\bar b}_{\mathbf m_2,\pi} \ne \emptyset$.
\item [$(4)$] Suppose ${\mathcal A}^{\bar s,\bar b}_{\mathbf m_2,\pi} \ne \emptyset$ and let $\mathcal{Z}(H_{\mathbf m_2})/\mathcal{Z}(H_{\mathbf m_1})$ be an $( \aleph_1 \cdot |N_{\mathbf c}|^+)$-free
   abelian group. Then ${\mathcal A}^{\bar s,\bar b}_{\mathbf m_1,\pi} \ne \emptyset$.
\end{itemize}
\end{remark}
\begin{proof}
(1): This follows from Definition \ref{3g.1}(2) and Remark~\ref{2b.12}(2).

 (2): We just need to check Definition~\ref{3g.1}(2),
keeping in mind that $F_{\mathbf m}^{\bar s,\bar b}(t')=F_{\mathbf c}^{\bar s,\bar b}(t')$ and $F_{\mathbf c}^{\bar s,\bar b}(t'), t'' \in  I_1$.

(3): This is a consequence of (1) and Proposition~\ref{3g.4}.

 (4): Let $f\in {\mathcal F}^{\bar s,\bar b}_{\mathbf m_2,\pi}$. Then
 $|\langle \mathcal{Z}(H_{\mathbf m_1}), {\rm Im}(f) \rangle/\mathcal{Z}(H_{\mathbf m_1})| \le \aleph_0 \cdot | N_{\mathbf c}| < \aleph_1 \cdot |N_{\mathbf c}|^+.$
Thus $\langle \mathcal{Z}(H_{\mathbf m_1}), {\rm Im}(f) \rangle/\mathcal{Z}(H_{\mathbf m_1})$ is a free abelian group, and we define $\varphi$ to be the projection
onto the direct summand $\mathcal{Z}(H_{\mathbf m_1}) \subseteq \langle \mathcal{Z}(H_{\mathbf m_1}), {\rm Im}(f) \rangle$. We have $\varphi \circ f \in {\mathcal F}^{\bar s,\bar b}_{\mathbf m_2,\pi}$
with (2), and even $\varphi \circ f \in {\mathcal F}^{\bar s,\bar b}_{\mathbf m_1,\pi}$ as ${\rm Im}(\varphi) \subseteq \mathcal{Z}(H_{\mathbf m_1})$. This implies that ${\mathcal A}^{\bar s,\bar b}_{\mathbf m_1,\pi} \ne \emptyset$. In view of
 Proposition~\ref{3g.4} ${\mathcal A}^{\bar s,\bar b}_{\mathbf m_1,\pi} \ne \emptyset$.
\end{proof}

\section{The structure of large auto-rigid representations}\label{4}

In this section we present the proof of Theorem \ref{int} and Corollary \ref{intc}.
The next lemma gives conditions under which $\Hom(N_{\bold c},\Bbb Z)$ is trivial.
\begin{lemma}
	\label{lem3} Let $j\in\{2,3,4,5,6\}$.
	Let  $\mathbf c \in \mathbf C_{\rm{aut}}^j$,  and  suppose that there is some auto-rigid ${\mathbf m}\in M_{\mathbf c}$ with $\|\mathbf m  \| > \|\mathbf c  \|$, or just $|\mathcal{Z}(H_{\mathbf m})| > |L_{\mathbf c}|$. Then
	$\Hom(N_{\bold c},\Bbb Z)=0$.
\end{lemma}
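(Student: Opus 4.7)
The plan is to show the contrapositive: if $\Hom(N_{\mathbf c},\mathbb{Z}) \neq 0$, then $|\aut(H_{\mathbf m})| \geq |\mathcal{Z}(H_{\mathbf m})|$, which combined with auto-rigidity ($|\aut(H_{\mathbf m})| = |L_{\mathbf m}| = |L_{\mathbf c}|$) contradicts $|\mathcal{Z}(H_{\mathbf m})| > |L_{\mathbf c}|$.

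First I would dispose of the first alternative hypothesis. By Definition \ref{2b.4}(1)(f)+(g), every element of $H_{\mathbf m}$ has the form $z\cdot h$ with $z\in\mathcal{Z}(H_{\mathbf m})$ and $h\in H_{\mathbf c}$, so $|H_{\mathbf m}| \leq |\mathcal{Z}(H_{\mathbf m})|\cdot|H_{\mathbf c}|$. Hence $\|{\mathbf m}\| > \|{\mathbf c}\|$ immediately forces $|\mathcal{Z}(H_{\mathbf m})| > |L_{\mathbf c}|$, and we only need to work under the weaker hypothesis.

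Next, assume for contradiction that $\Hom(N_{\mathbf c},\mathbb{Z})\neq 0$. Pick a nonzero $\phi\in\Hom(N_{\mathbf c},\mathbb{Z})$; its image equals $n_0\mathbb{Z}$ for some $n_0\geq 1$, and dividing by $n_0$ (well-defined because $\mathbb{Z}$ is torsion-free) produces a surjective homomorphism $\psi\colon N_{\mathbf c}\twoheadrightarrow\mathbb{Z}$. Fix $a_0\in N_{\mathbf c}$ with $\psi(a_0)=1$. For every $c\in\mathcal{Z}(H_{\mathbf m})$ define
\[
f_c\colon N_{\mathbf c}\to\mathcal{Z}(H_{\mathbf m}),\qquad f_c(a):=\psi(a)\cdot c,
\]
which is a homomorphism since the center is abelian. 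The key point is that $f_c(a_0)=c$, so the assignment $c\mapsto f_c$ is injective, giving $|\Hom(N_{\mathbf c},\mathcal{Z}(H_{\mathbf m}))|\geq|\mathcal{Z}(H_{\mathbf m})|$.

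Now invoke Definition \ref{2b.13}(3) and Proposition \ref{2b.16}: for each $c$ we obtain an automorphism $g_{f_c}\in\aut(H_{\mathbf m})$, where $g_{f_c}(x)=x\cdot f_c(h_{\mathbf m}(x))$. Picking any $x_0\in H_{\mathbf m}$ with $h_{\mathbf m}(x_0)=a_0$ (possible because $h_{\mathbf m}$ is surjective), one gets $g_{f_c}(x_0)=x_0\cdot c$, so distinct $c$'s yield distinct automorphisms. Thus $|\aut(H_{\mathbf m})|\geq|\mathcal{Z}(H_{\mathbf m})|>|L_{\mathbf c}|$, contradicting auto-rigidity of $\mathbf m$, which yields $|\aut(H_{\mathbf m})|=|L_{\mathbf m}|=|L_{\mathbf c}|$. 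No step here looks like a serious obstacle; the only mildly delicate point is the normalization trick replacing $\phi$ by a surjection $\psi$, which is exactly what makes the single element $a_0\in N_{\mathbf c}$ available to separate the family $\{f_c\}_{c\in\mathcal{Z}(H_{\mathbf m})}$ and transfer that separation to the family $\{g_{f_c}\}$ in $\aut(H_{\mathbf m})$.
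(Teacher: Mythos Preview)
Your proof is correct and follows the same approach as the paper: from a nonzero element of $\Hom(N_{\mathbf c},\mathbb Z)$ manufacture a $\mathcal{Z}(H_{\mathbf m})$-indexed family in $\mathcal F_{\mathbf m}$, pass to the automorphisms $g_{f_c}$ via Proposition~\ref{2b.16}, and contradict auto-rigidity by cardinality. Your write-up is in fact a bit more careful than the paper's---you explicitly reduce the first hypothesis to the second, and your normalization of $\phi$ to a surjection $\psi$ gives a clean proof that $c\mapsto g_{f_c}$ is injective, whereas the paper's version (using $f_*$ unnormalized) leaves that step implicit.
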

\begin{proof}
	Suppose for the sake of contradiction that there is a nonzero  $f_* \in \Hom(N_{\bold c},\Bbb Z)$.
	For each $x \in \mathcal{Z}(H_{\mathbf m})$,
	the assignment
	$a\mapsto f_x(a) :=
	x ^{f_*(a)}$ defines a map $f_x \in  \Hom(N_{\bold c},\mathcal{Z}(H_{\mathbf m}))$.
	It then follows that by Lemma \ref{3g.6}, the map  $g_{f_x}:H_{\mathbf m} \to H_{\mathbf m}$
	defined by $g_{f_x}(t) = t \cdot f_x(h_{\mathbf m}(t))$ is in
	$\aut(H_{\mathbf m})$. Now,
	by the
	auto-rigidity, we have
	$L_\bold
	c\cong {\rm Im} (F_{\mathbf m}) = {\rm Aut}(H_{\mathbf m})$,
	hence
	$
	|L_\bold
	c| =|{\rm Aut}(H_{\mathbf m})| \geq |\mathcal{Z}(H_{\mathbf m})| = |H_{\mathbf m}| > |L_\bold
	c|,$
	which is impossible.
	This contradiction shows that  Hom$(N_{\bold c},\Bbb Z)$ is trivial, which gives the desired conclusion.
\end{proof}

\begin{definition}
	\label{18.5}
	Suppose $G$ is an abelian group and $\mathbf k= (  G,  ( F_G^\ell\mid  \ell \in L_{\mathbf c} ))$ is an expansion of $G$,
	where each $F_G^\ell$ is an automorphism of $G$.
	Let  $\mathbf c \in \mathbf C^1_{\rm{aut}}$ and ${\mathbf m} \in \mathbf M_{\mathbf c}$.
	
	\begin{enumerate}
		\item We say $\mathbf k$ is \emph{$\mathbf c$-correct} if for all $n<\omega, \bar s \in \mathbb Z^n$ and $\bar b \in(L_{\mathbf c})^n$ with $\bar b \in Q^{\bar s}_{\mathbf c}$
		holds
		$\sum_{\ell =1}^n F_G^\ell =0\in\End(G).$

		\item If $\mathbf k$ is $\mathbf c$-correct,  we define ${\mathbf n}={\mathbf m} \oplus \mathbf k \in \mathbf M_{\mathbf c}$  so that
		$\mathcal{Z}(H_{\mathbf n})=  \mathcal{Z}(H_{\mathbf m})\oplus G$,
		and  $F_{{\mathbf n}}^\ell$ is the unique extension of $F_{\mathbf m}^\ell \cup F_G^\ell$ to an automorphism of $H_{\mathbf n}$.
		
		
		\item Suppose $(\mathbf c, {\mathbf g}) \in \mathbf C^5_{\rm{aut}}$ and ${\mathbf m} \in \mathbf M_{\mathbf c}$.
		\begin{enumerate}
			\item We say $\mathbf m$ is \emph{free over $(\mathbf c,  {\mathbf g})$}, if we can find $( f_\xi\mid \xi < \zeta  )$ such that $\mathcal{Z}(H_{\mathbf m}) = \bigoplus_{\xi < \zeta}f_\xi(\mathbb{G}_{\mathbf g}) \oplus \mathcal{Z}(H_{\mathbf c})$, where $f_\xi:(\mathbb G_{\mathbf g}, (F_{\mathbf g}^\ell)_{\ell \in L_{\mathbf c}}) \hookrightarrow(H_{\mathbf c},  (F_{\mathbf m}^{\ell})_{\ell \in L_{\mathbf c}}    )$ is an embedding. We assume that the embedding respects structures, which means
			$f_\xi F_{\mathbf g}^\ell = F_{\mathbf m}^\ell  f_\xi$.

			\item We say  $\mathbf m$ is \emph{$\lambda$-free over $\mathbf c$}, if for any subgroup $G'$ of $\mathcal{Z}(H_{\mathbf m})/ \mathcal{Z}(H_{\mathbf c})$ of size $< \lambda$, there is ${\mathbf m}' \in \mathbf M_{\mathbf c}$, ${\mathbf m}' \leq_{{\mathbf c}} {\mathbf m}$ which is free over
			${\mathbf c}$ such that $G' \subseteq H_{{\mathbf m}'}$.
			
			\item We say  $\mathbf m$ is \emph{strongly $\lambda$-free over $\mathbf c$}, if {\tt free} wins
			the following game for which a play lasts $\omega$ moves: in the $n$-th move, {\tt non-free} chooses $X_n \in [H_{\mathbf m}]^{< \lambda}$, {\tt free} chooses $\xi_n < \lambda$ and $( f_{n, \xi}\mid \xi < \xi_n )$,
			where each $f_{n, \xi}: \mathbb{G}_{\mathbf g} \rightarrow \mathcal{Z}(H_{\mathbf m})$ is an embedding, and
			$\sum_{m \leq n, \xi < \xi_n}f_{m, \xi}(\mathbb{G}_{\mathbf g} ) + \mathcal{Z}(H_{\mathbf m}) = \bigoplus_{m \leq n, \xi < \xi_n}f_{m, \xi}(\mathbb{G}_{\mathbf g} ) \oplus \mathcal{Z}(H_{\mathbf m}),$
			and it includes $X_n$. The {\tt free} player wins if he always has a legal move.
		\end{enumerate}
		\item Similarly, if $(\mathbf c, {\mathcal G}) \in \mathbf C^6_{\rm{aut}}$ and ${\mathbf m} \in \mathbf M_{\mathbf c}$, we say
		$\mathbf m$ is \emph{strongly $\lambda$-free over $(\mathbf c,  {\mathcal G})$}, if {\tt free} wins
		the following game for which a play lasts $\omega$ moves: in the $n$-th move, {\tt non-free} chooses $X_n \in [H_{\mathbf m}]^{< \lambda}$, {\tt free} chooses $\xi_n < \lambda$ and $( f_{n, \xi}\mid \xi < \xi_n )$,
		where each $f_{n, \xi}: {\bf g}_{n, \xi} \rightarrow \mathcal{Z}(H_{\mathbf m})$ is an embedding, and
		$\sum_{m \leq n, \xi < \xi_n}f_{m, \xi}({\bf g}_{m, \xi}) + \mathcal{Z}(H_{\mathbf m}) = \bigoplus_{m \leq n, \xi < \xi_n}f_{m, \xi}({\bf g}_{m, \xi}) \oplus \mathcal{Z}(H_{\mathbf m})$
		for some ${\bf g}_{m, \xi} \in \mathcal{G}$,
		and it includes $X_n$. The player {\tt free} wins if he always has a legal move.
	\end{enumerate}
	
\end{definition}

\begin{lemma}\label{tri}
	Suppose  $(\mathbf c, \mathbf g)
	\in \mathbf C_{\rm{aut}}^5$, {\rm Hom}$(N_{\bold c},\mathbb{G}_{\bold g})=0$,  and ${\mathbf m}$ is $|N_{\bold c}|^+$-free over  $(\mathbf c,  \mathbf g)$.
	Then  $\Hom(N_{\bold c},\mathcal{Z}(H_{\bold m}))\subseteq \Hom(N_{\bold c},H_{\bold c}).$
\end{lemma}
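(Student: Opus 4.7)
The plan is to take an arbitrary $f\in \Hom(N_{\bold c},\mathcal{Z}(H_{\bold m}))$ and show that its image is actually contained in $\mathcal{Z}(H_{\bold c})\subseteq H_{\bold c}$. The central idea is to use the $|N_{\bold c}|^+$-freeness of $\mathbf m$ to capture the image of $f$ inside a ``free'' intermediate structure $\mathbf m'$, and then use the vanishing hypothesis $\Hom(N_{\bold c},\mathbb{G}_{\bold g})=0$ to kill the contribution from each $\mathbb{G}_{\bold g}$-summand.

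More concretely, first I would note that $|\operatorname{Im}(f)|\le |N_{\bold c}|$, so the subgroup $G':= (\operatorname{Im}(f)\cdot \mathcal{Z}(H_{\bold c}))/\mathcal{Z}(H_{\bold c})$ of $\mathcal{Z}(H_{\bold m})/\mathcal{Z}(H_{\bold c})$ has cardinality at most $|N_{\bold c}|<|N_{\bold c}|^+$. Applying Definition~\ref{18.5}(3)(b) (read over $(\mathbf c,\mathbf g)$), there is $\mathbf m'\le_{\mathbf c}\mathbf m$ which is free over $(\mathbf c,\mathbf g)$ and satisfies $G'\subseteq \mathcal{Z}(H_{\mathbf m'})/\mathcal{Z}(H_{\bold c})$; in particular $\operatorname{Im}(f)\subseteq \mathcal{Z}(H_{\mathbf m'})$.

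Next, by Definition~\ref{18.5}(3)(a), we can write
\[
\mathcal{Z}(H_{\mathbf m'}) \;=\; \bigoplus_{\xi<\zeta} f_\xi(\mathbb{G}_{\bold g})\;\oplus\;\mathcal{Z}(H_{\bold c})
\]
with each $f_\xi:\mathbb{G}_{\bold g}\hookrightarrow \mathcal{Z}(H_{\mathbf m'})$ an embedding. For every $\xi<\zeta$ let $\pi_\xi:\mathcal{Z}(H_{\mathbf m'})\to f_\xi(\mathbb{G}_{\bold g})\cong \mathbb{G}_{\bold g}$ be the corresponding coordinate projection. Then $\pi_\xi\circ f\in \Hom(N_{\bold c},\mathbb{G}_{\bold g})=0$ by assumption, so $\pi_\xi\circ f=0$ for every $\xi$. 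Since also $\operatorname{Im}(f)\subseteq \mathcal{Z}(H_{\mathbf m'})$, combining these vanishing projections with the direct sum decomposition forces $\operatorname{Im}(f)\subseteq \mathcal{Z}(H_{\bold c})\subseteq H_{\bold c}$. Hence $f\in \Hom(N_{\bold c},H_{\bold c})$, as required.

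I do not expect any serious obstacle here; the argument is essentially a bookkeeping exercise once one reads off the right freeness definition. The only mild subtlety is making sure that the ``$\lambda$-free over $\mathbf c$'' condition of Definition~\ref{18.5}(3)(b) is applied in its intended form over the pair $(\mathbf c,\mathbf g)$ so that the witnessing $\mathbf m'$ supplies the $\mathbb{G}_{\bold g}$-direct-sum decomposition needed for the projection step.
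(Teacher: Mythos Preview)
Your proposal is correct and follows essentially the same route as the paper: use the $|N_{\bold c}|^+$-freeness to trap ${\rm Im}(f)$ inside a direct sum $\mathcal{Z}(H_{\mathbf c})\oplus\bigoplus_\xi f_\xi(\mathbb{G}_{\bold g})$, then compose with the coordinate projections and invoke $\Hom(N_{\bold c},\mathbb{G}_{\bold g})=0$. The only cosmetic difference is that the paper phrases the projection step by contradiction (assuming $\bar f\neq 0$ and exhibiting one nonzero $\pi_\xi\circ\bar f$), whereas you argue directly that every $\pi_\xi\circ f$ vanishes; and you correctly flag that Definition~\ref{18.5}(3)(b) must be read relative to $(\mathbf c,\mathbf g)$ so that the witnessing $\mathbf m'$ is free in the sense of (3)(a).
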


\begin{proof}
	Let $f \in \Hom(N_{\bold c},\mathcal{Z}(H_{\bold m}))$. Suppose for the sake of contradiction that $f \notin \Hom(N_{\bold c},H_{\bold c})$. This means that ${\rm Im}(f) \nsubseteq  H_{\bold c}$.
	In other words, the following compositions map
	$$\bar{f}:=N_{\bold c}\stackrel{f} \longrightarrow  \mathcal{Z}(H_{\bold m})\stackrel{\subseteq}\longrightarrow  H_{\bold m}\stackrel{\twoheadrightarrow}\longrightarrow   H_{\bold m} /H_{\mathbf c}$$
	is nonzero and  ${\rm Im}(\bar{f})\subseteq H_{\mathbf m} / H_{\mathbf c}$ is of size at most $|N_{\bold c}|$.
	Thus, by our assumption, we can find $( f_\xi\mid \xi < \zeta  )$ such that
	$f_\xi: \mathbb G_{\mathbf g} \rightarrow \mathcal{Z}(H_{\mathbf c})$ is an embedding and ${\rm Im}(\bar{f}) \subseteq \bigoplus_{\xi < \zeta}f_\xi(\mathbb{G}_{\mathbf g}).$
	Hence, for some $\xi < \zeta$, the natural projection $\pi_\xi: \bigoplus_{\xi < \zeta}f_\xi(\mathbb{G}_{\mathbf g}) \rightarrow f_\xi(\mathbb{G}_{\mathbf g})$
	satisfies that $\pi_\xi \circ \bar{f} \in \Hom(N_{\mathbf c}, f_\xi(\mathbb{G}_{\mathbf g}))$ is nonzero. We proved that $f_\xi \circ \pi_\xi \circ \bar{f}: N_{\mathbf c} \rightarrow \mathbb{G}_{\bold g}$
	is nonzero, a contradiction.
\end{proof}

\begin{remark}\label{sl}The above  proof shows that:
	\begin{enumerate}
		\item Suppose  $(\mathbf c, \mathbf g)
		\in \mathbf C_{\rm{aut}}^5$,	 $\mathbf m$ is $\lambda$-free over  $(\mathbf c,  \mathbf g)$, and $\mathbb{G}_{\mathbf g}$ is $\lambda$-free. Then $H_{\bold m} /H_{\mathbf c}$ is $\lambda$-free.

		\item Let $(\bold c, \mathcal{G}) \in\mathbf  C_{\rm{aut}}^6$. Then
		{\rm Hom}$(N_{\bold c},\mathbb{G}_{\bold g})=0$ for some $\bold g\in\mathcal{G}$ iff {\rm Hom}$(N_{\bold c},\mathbb{Z})=0$.
	\end{enumerate}
\end{remark}
\begin{definition}\label{dia}
	Suppose $\lambda > \aleph_0$ is regular and $S \subseteq \lambda$ is stationary.
	\begin{enumerate}
		\item The  \emph{Jensen's diamond $\diamondsuit_\lambda(S)$} asserts the existence
		of a sequence $(S_\alpha\mid \alpha \in S )$
		such that for every $X \subseteq \lambda$ the set $\{\alpha \in S\mid X \cap \alpha=S_\alpha    \}$
		is stationary. \item We use the following consequence of $\diamondsuit_\lambda(S)$: let $A=\bigcup_
		{\alpha<\lambda} A_\alpha$ and $B=\bigcup_
		{\alpha<\lambda} B_\alpha$ be
		two $\lambda$-filtrations. Then there are $\{g_\alpha\mid A_\alpha \to B_\alpha\mid \alpha<\lambda\}$
		such that, for any function $g: A \to B$, the set
		$\{\alpha\in S\mid g\rest_{A_\alpha}=g_\alpha\} $ is stationary in $\lambda$.
		\item $S$ is \emph{non-reflecting} if for any limit ordinal $\delta < \lambda$ of uncountably cofinality, the set $S \cap \delta$
		is non-stationary in $\delta.$\item We set  $S^\lambda_{\aleph_0}=\{\alpha < \lambda\mid \cf(\alpha)=\aleph_0     \}$.
	\end{enumerate}
\end{definition}

Recall   that $g_f(x) :=x \cdot f(h_{\mathbf m}(x))  $. The following is the main result of this section:

\begin{theorem}\label{5h.11}  Let $(\bold c, \mathcal{G}) \in\mathbf  C_{\rm{aut}}^6$, $\lambda = \text{\rm cf}(\lambda) > 2^{\|\bold c\|}$	
	and assume
	that:
	\begin{enumerate}
		\item[$(1)$]   {\rm Hom}$(N_{\bold c},\mathbb{G}_{\mathbf g})=0$ for all $\mathbf g \in \mathcal{G},$ and $F_{\mathbf c}: L_{\mathbf c} \rightarrow \aut(H_{\mathbf c})$ is an isomorphism.
		\item[$(2)$]  $S \subseteq S^\lambda_{\aleph_0}$ is stationary non-reflecting such that
		$\diamondsuit_\lambda(S)$ holds.
	\end{enumerate}
	{Then} there is some  ${\mathbf m}\in\mathbf C_{\rm{aut}}^6\cap \bold M_{\bold c}$  of size $\lambda$ such that the following holds:
	\begin{enumerate}
		\item [$(\alpha)$]     ${\mathbf m}$ is $\lambda$-free over $\mathbf c$.
		\item[$(\beta)$] $\Hom(N_{\bold c},\mathcal{Z}(H_{\bold m})) \subseteq \Hom(N_{\bold c},H_{\bold c})$.
		
		\item [$(\gamma)$]  If $g \in \aut (H_{\mathbf m})$, then for some $b \in
		L_{\bold c}$ and $f \in \Hom(N_{\bold c},\mathcal{Z}(H_{\bold m}))$, we have
		$g = F_{\mathbf m}^b \circ g_f$. 
	\end{enumerate}	
\end{theorem}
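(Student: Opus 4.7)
The plan is to build $\mathbf m$ as $\mathbf m_\lambda$, where $\langle \mathbf m_\alpha : \alpha \leq \lambda\rangle$ is a continuous $\leq_{\bold c}$-increasing filtration in $\mathbf M_{\bold c}$ with $\mathbf m_0 = \bold c$ and $|\mathbf m_\alpha| < \lambda$ for $\alpha < \lambda$. At each successor step, $\mathcal Z(H_{\mathbf m_{\alpha+1}})$ is enlarged over $\mathcal Z(H_{\mathbf m_\alpha})$ by direct-summing copies of groups $\mathbb G_{\mathbf g}$ ($\mathbf g \in \mathcal G$) as in Lemma \ref{a22}, so that the $F_{\mathbf m_\alpha}^\ell$'s extend canonically to the enlarged centre. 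For $\alpha \notin S$ the enlargement is \emph{generic}: we adjoin one fresh copy of some $\mathbb G_{\mathbf g} \in \mathcal G$, keeping the filtration free over $\bold c$. Since $S$ is non-reflecting and concentrated on $S^\lambda_{\aleph_0}$, an inductive bookkeeping then shows that $\mathbf m_\alpha$ remains free over $\bold c$ at every $\alpha < \lambda$, which will eventually yield clause $(\alpha)$.

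The heart of the argument is at $\alpha \in S$, where $\diamondsuit_\lambda(S)$ is used to guess simultaneously a candidate automorphism $g_\alpha : H_{\mathbf m_\alpha} \to H_{\mathbf m_\alpha}$ and a candidate normal form $(b_\alpha, f_\alpha) \in L_{\bold c} \times \Hom(N_{\bold c}, \mathcal Z(H_{\mathbf m_\alpha}))$. If $g_\alpha = F_{\mathbf m_\alpha}^{b_\alpha}\circ g_{f_\alpha}$ we proceed as in the non-$S$ case. Otherwise fix a witness $x_\alpha \in H_{\mathbf m_\alpha}$ with $g_\alpha(x_\alpha) \neq F_{\mathbf m_\alpha}^{b_\alpha}(g_{f_\alpha}(x_\alpha))$, choose an $\omega$-sequence $\alpha_0 < \alpha_1 < \cdots$ converging to $\alpha$, and adjoin a new centre element $y_\alpha$ together with countably many relations $y_\alpha^{p_n} = z_{\alpha,n}$, where the $z_{\alpha,n} \in \mathcal Z(H_{\mathbf m_{\alpha_n}})$ are chosen so that (i) the system is solvable, yielding an extension $\mathbf m_{\alpha+1} \in \mathbf M_{\bold c}$ with $H_{\mathbf m_{\alpha+1}}/H_{\mathbf m_\alpha}$ of rank one and preserving freeness over $\bold c$; but (ii) any would-be extension $\tilde g_\alpha$ of $g_\alpha$ to an endomorphism of $H_{\mathbf m_{\alpha+1}}$ would have to send $y_\alpha$ to a centre element satisfying the transformed equations, and combining this with the (already-fixed) action on $z_{\alpha,n}$ via $g_\alpha\rest H_{\mathbf m_{\alpha_n}}$ forces $g_\alpha(x_\alpha) = F_{\mathbf m_\alpha}^{b_\alpha}(g_{f_\alpha}(x_\alpha))$, contradicting the choice of $x_\alpha$. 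Thus the guessed $g_\alpha$ admits no extension to $\aut(H_{\mathbf m_{\alpha+1}})$.

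Once the filtration is complete, clause $(\alpha)$ follows from the non-reflection of $S$, as any subgroup of $\mathcal Z(H_{\mathbf m})/\mathcal Z(H_{\bold c})$ of size $< \lambda$ is supported on a bounded stretch of the filtration on which only the non-$S$ generic pieces contribute. Clause $(\beta)$ is then immediate from Lemma \ref{tri}, since $|N_{\bold c}|^+ < \lambda$ and $\Hom(N_{\bold c}, \mathbb G_{\mathbf g}) = 0$ for every $\mathbf g \in \mathcal G$. For clause $(\gamma)$, let $g \in \aut(H_{\mathbf m})$; clause $(\beta)$ provides a canonical $f \in \Hom(N_{\bold c}, H_{\bold c})$ and, together with the isomorphism $F_{\bold c} : L_{\bold c} \cong \aut(H_{\bold c})$ applied to the restriction of $g$ to $H_{\bold c}$, a canonical $b \in L_{\bold c}$ such that $g$ agrees with $F_{\mathbf m}^b \circ g_f$ on a club $C \subseteq \lambda$. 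Diamond forces the guessed triple at some $\alpha \in C \cap S$ to match $(g \rest H_{\mathbf m_\alpha}, (b, f \rest N_{\bold c}))$, and the killing step at that $\alpha$ — which would otherwise have aborted extension to $\mathbf m_{\alpha+1}$ — implies $g = F_{\mathbf m}^b \circ g_f$ throughout.

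The main obstacle is the simultaneous solvability/unsolvability design at the $S$-stages: the relations adjoined at $\alpha$ must be compatible enough to give a legitimate $\mathbf m_{\alpha+1} \in \mathbf M_{\bold c}$ (respecting the $\mathbf C^4_{\rm{aut}}$ requirements on $H^\ast$, $\mathbb P_{\mathbf c}$-divisibility and $\mathcal G$-freeness), yet rigid enough that no extension of the non-standard $g_\alpha$ can survive. The delicate balance between these demands relies on the $\cf(\alpha) = \aleph_0$ structure to produce $y_\alpha$ as a formal $\omega$-limit of prime-power equations, on the $\mathbb P_{\mathbf c}$-divisibility allowed in $\mathbf C^4_{\rm{aut}}$ to make the equations globally solvable, and on the vanishing of $\Hom(N_{\bold c}, \mathbb G_{\mathbf g})$ (via Lemma \ref{tri}) to translate between the central coordinate $y_\alpha$ and the witness $x_\alpha$.
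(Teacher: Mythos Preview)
Your outline has the right large-scale shape (continuous $\leq_{\bold c}$-filtration, generic extension off $S$, diamond plus killing on $S$), but there are two genuine gaps.

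\textbf{First, the verification of clause $(\gamma)$ is circular.} You write that from $g\in\aut(H_{\mathbf m})$ one extracts a ``canonical'' $(b,f)$ by restricting $g$ to $H_{\bold c}$ and invoking $F_{\bold c}\colon L_{\bold c}\cong\aut(H_{\bold c})$. But nothing forces $g$ to map $H_{\bold c}$ into itself, and even if it did, this presupposes exactly the structure you are trying to prove. The paper argues contrapositively: assume $g\neq F_{\mathbf m}^b\circ g_f$ for \emph{every} $(b,f)$; since $|L_{\bold c}|+|\Hom(N_{\bold c},\mathcal Z(H_{\bold m}))|\le 2^{\|\bold c\|}<\lambda$ (using clause $(\beta)$), this inequality persists on a club of restrictions, and then diamond provides $\delta\in S$ at which the killing step has already blocked any extension of $g_\delta=g\!\upharpoonright H_{\mathbf m_\delta}$, a contradiction.

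\textbf{Second, your killing step is mis-aimed and under-specified.} You propose to kill $g_\alpha$ whenever it differs from the single \emph{guessed} $F^{b_\alpha}\circ g_{f_\alpha}$. But then $g_\alpha$ could perfectly well equal $F_{\mathbf m_\alpha}^d$ for some other $d\in L_{\bold c}$, and this automorphism \emph{must} extend (you are building $F_{\mathbf m}^d$!), so the killing step would fail. The paper kills only when $g_\delta\neq F_{\mathbf m_\delta}^b\circ g_f$ for \emph{all} $(b,f)$ (this is clause $(f_2)$ in the paper's $(\ast)_0$). More importantly, the actual killing is not a single ad hoc relation on a new $y_\alpha$: the paper runs a trichotomy on how $g_\delta$ acts on embedded copies of $\mathbb G_{\mathbf g}$. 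If some $\mathbf g$ is ``active'' (there are embeddings $f$ with $g_\delta(f(x))\notin\mathcal Z(H_{\mathbf m_\alpha})+\operatorname{Im}(f)$, or two embeddings giving different answers, etc.), an explicit $\mathbb Z$-adic completion argument with the branching element $y_0$ versus $\tilde y_0=y_0+(\sum n!)f_0(x)$ produces $\mathbf m_{\delta+1}$ over which $g_\delta$ has no extension. If no $\mathbf g$ is active (the ``nice'' case), one shows that $g_\delta$ acts on every embedded $\mathbb G_{\mathbf g}$ by a single fixed combination $\sum s_i^* F^{b_i^*}$, and then deduces $g_\delta=F_{\mathbf m_\delta}^b\circ g_f$ already, contradicting $\delta\in U_0$. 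Your sketch does not address this dichotomy, and without it there is no reason your $z_{\alpha,n}$ can be chosen to block the extension.
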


\begin{proof}
	Without loss of generality, $\mathcal{G}$ consists of pairwise disjoint elements.
Let  $\overline{g}:=( g_\alpha\mid \alpha \in S)$ be such that $g_\alpha: \alpha \rightarrow \alpha$ and it is a diamond sequence for $S$, in the sense that  for each $g: \lambda \rightarrow \lambda$,
	the set $\{\alpha \in S\mid g\upharpoonright \alpha=g_\alpha  \}$ is stationary in $\lambda$.
	Without loss of generality, we assume in addition  that the set of elements of $H_{\bold c}$ is an
	ordinal, which, by our assumption, is $<\lambda$.
	For $\gamma \leq \lambda$, we define the set $\Lambda_\gamma$, consisting of  sequences $\bar{\mathbf m}=( {\mathbf m}_\alpha\mid \alpha < \gamma )$
	of length $\gamma$ and a set  $U_0(\bar{\mathbf m}) \subseteq \gamma$ such that:
	\begin{enumerate}
		\item[$(\ast)_0$]
		\begin{enumerate}
			\item[(a)] $ {\mathbf m}_\alpha \in \bold M_{\bold c}$ has
			universe an ordinal less than $\lambda$,
			
			\item[(b)]	$ {\mathbf m}_0=\mathbf c$,	
			
			\item[(c)] ${\mathbf m}_\alpha$ is free over $\mathbf c$, in particular, $H_{{\mathbf m}_\alpha}/ H_{{\mathbf c}}$ is free,
			
			\item[(d)] the sequence  $( {\mathbf m}_\alpha\mid \alpha < \gamma )$ is increasing and continuous at limit ordinals, i.e., ${\mathbf m}_\alpha= \bigcup_{\beta < \alpha}{\mathbf m}_\beta$,
			for all limit ordinals $\alpha < \gamma,$
			
			\item[(e)]  if $\beta < \alpha$ and $\beta \notin S$, then
			$\mathbf m_\alpha$ is free over ${\mathbf m}_\beta$,
			
			\item[(f)]  $U_0(\bar{\mathbf m}) \subseteq \gamma$ is defined by $\delta \in U_0(\bar{\mathbf m})$ if and only if:
			
			\begin{itemize}
				\item[$(f_1)$]  $\delta \in S,$
				the set of elements of $H_{\mathbf{m}_\delta}$ is $\delta$, and
				$g_\delta\in\aut(H_{\mathbf{m}_\delta})$,
				
				\item[$(f_2)$] $g_\delta\neq(F_{\mathbf m_\delta}^b \circ g_f) \upharpoonright_{ H_{\mathbf m_\delta}},$ for any $b \in
				L_{\bold c}$ and  $f \in \Hom(N_{\bold c},\mathcal{Z}(H_{\bold c}))$,
			\end{itemize}

			\item[(g)]  if $\alpha=\beta+1$, where $\beta \in \gamma \setminus U_0(\bar{\mathbf m}),$ then $H_{\mathbf m_\alpha}$ is defined such that
			$\mathcal{Z}(H_{\mathbf m_\alpha}) = \mathcal{Z}(H_{\mathbf m_\beta}) \oplus \bigoplus_{\mathbf g \in \mathcal{G}} f_{\alpha, \mathbf g}(\mathbb G_{\mathbf g}),$
			where $f_{\alpha, \mathbf g}$ embeds $( \mathbb G_{\mathbf g}, (F_{\mathbf g}^b)_{b \in L_{\mathbf c}})$ into
			$(H_{\mathbf m_\alpha},  (F_{\mathbf m_\alpha}^{b})_{b \in L_{\mathbf c}}    )$, so that Lemma \ref{a22}(2) holds,
			
			\item[(h)] if $\alpha=\beta+1$, and $\beta \in U_0(\bar{\mathbf m})$, then for any $n<\omega$ and $ {\mathbf g}\in\mathcal{G}$ we have
			the embeddings $f^{\bf g}_{\beta, n}: \mathbb G_{\mathbf g}\rightarrow H_{\mathbf m_\alpha}$ such that:
			\begin{itemize}
				\item[$(h_1)$] $H_{\mathbf m_\alpha}	/ H_{\mathbf m_\beta}=\bigoplus\{f^{\bf g}_{\beta, n}(\mathbb{G}_{\bf g})/ H_{\mathbf m_\beta}\mid {\bf g} \in \mathcal{G}        \},$
				
				\item[$(h_2)$] $n! f^{\bf g}_{\beta, n+1}(y) - f^{\bf g}_{\beta, n}(y) \in H_{\mathbf m_\beta}\subseteq H_{\mathbf m_\alpha}$ for any
				$y \in \mathbb G_{\mathbf g}$,
				\item[$(h_3)$] the following diagram commutes:$$\xymatrix{
					& 0\ar[r]& \mathbb G_{\mathbf g}\ar[r]^{f^{\bf g}_{\beta, n}} &H_{\mathbf m_\alpha}	\\ & 0\ar[r] & \mathbb G_{\mathbf g} \ar[u]^{F_{\mathbf g}^\ell}\ar[r]^{f^{\bf g}_{\beta, n}}  &  H_{\mathbf m_\alpha}\ar[u]_{F_{\mathbf m_\alpha}^\ell}
					&&&}$$
			\end{itemize}
			\item[(i)] Suppose 	$ F_{\mathbf c}^b \circ g_f=F_{\mathbf c}^d$
			for some 	$b,d \in
			L_{\bold c}$ and $f \in \Hom(N_{\bold c},\mathcal{Z}(H_{\bold c}))$. Then
			$ F_{{\mathbf m_\alpha}}^b \circ g_f=F_{{\mathbf m_\alpha}}^d$, where $g_f(x) =x \cdot f(h_{\mathbf m}(x))$.
		\end{enumerate}	
	\end{enumerate}
	
	\begin{enumerate}	
		\item[$(\ast)_1$] Suppose $\bar{\mathbf m} \in \Lambda_\gamma.$ Let  $U(\bar{\mathbf m}) \subseteq \gamma$ be the set of all $\delta \in S \cap \gamma$ such that there are $\bar{\mathbf n}:=( \mathbf n_\alpha\mid \alpha < \lambda ), \chi, h, \mathcal{B}$ such that:
		\begin{enumerate}
			\item $\bar{\mathbf n} \in \Lambda_\lambda$, set also $\mathbf n_\lambda = \bigcup_{\alpha < \lambda} \mathbf n_\alpha$, so that $\mathbf n_\lambda \in M_{\mathbf c}$ and its universe is $\lambda,$
			
			\item $\bar{\mathbf n} \upharpoonright \delta = \bar{\mathbf m} \upharpoonright \delta,$
			
			\item $h \in \aut(H_{\mathbf n_\lambda})$,
			
			\item $h \upharpoonright \delta\neq(F_{\mathbf n_\delta}^b \circ g_f)\upharpoonright H_{\mathbf n_\delta}$, for any $b \in L_{\mathbf c}$
			and $f \in \Hom(N_{\mathbf c}, \mathcal{Z}(H_{\mathbf c}))$,
			
			\item $\chi > 2^\lambda$ is regular so that $\bar{\mathbf n}, h,  (\mathbf c, \mathcal{G}) \in \cH(\chi)$,
			
			\item $\mathcal{B} \prec (\cH(\chi), \in), ||\mathcal{B}|| < \lambda$, and $\mathcal{B} \cap \lambda=\delta,$
			
			\item $\bar{\mathbf n}, h, (\mathbf c, \mathcal{G}) \in  \mathcal{B}$ and $h \upharpoonright \delta=g_\delta.$
			
		\end{enumerate}
	\end{enumerate}
	
	\begin{enumerate}
		\item[$(\ast)_2$] Suppose  $\gamma \leq \lambda$ is an ordinal. Then easily the following assertions hold:
		\begin{enumerate}
			\item If $\gamma$ is a limit ordinal, $\bar{\mathbf m}_\alpha \in \Lambda_\alpha$ for $\alpha < \gamma$,
			and $( \bar{\mathbf m}_\alpha\mid \alpha < \gamma   )$ is $\lhd$-increasing, then $\bar{\mathbf m}_\gamma = \bigcup_{\alpha < \gamma}\bar{\mathbf m}_\alpha \in \Lambda_\gamma$,
			and it end extends all $\bar{\mathbf m}_\alpha$'s, $\alpha < \gamma.$
			
			\item If $\bar{\mathbf m} \in \Lambda_\gamma$ and $\gamma' \in [\gamma, \lambda),$ then there is $\bar{\mathbf n} \in \Lambda_{\gamma'}$
			such that $\bar{\mathbf m} \unlhd \bar{\mathbf n}$.
		\end{enumerate}
	\end{enumerate}
	[Why? For (a), it is enough to show that $\mathbf m_\gamma$ is free over $\mathbf m_\mathbf c$. To this end, let $C$ be a club of $\gamma$ which is disjoint to $S$, which exists as $S$ is non-reflecting. By clause $(\ast)_0$(c), $\mathbf m_\alpha$ is free over $\mathbf{c}$, for all $\alpha$ in $C$, and by clause $(\ast)_0$(e), $\mathbf m_\alpha$ is free over $\mathbf{m}_\beta$, for all $\beta < \alpha$ in $C$. So clearly $\mathbf m_\gamma$ is free over $\mathbf m_\mathbf c$. For clause (b), we prove something stronger in the following.]	
	
	For $\gamma \leq \lambda$ we define $\Omega_\gamma$ as the class of all  $\bar{\mathbf m} \in \Lambda_\gamma$ such that:
	\begin{enumerate}
		\item[$(\ast)_3$]
		If $\alpha=\delta+1 < \gamma$
		and $\delta \in U(\bar{\mathbf m}),$ then $g_\delta$ is unextendable, which means
		if $\bar{\mathbf  m} \upharpoonright \alpha+1 \leq_{\bf c}\bar{\mathbf   n} \in \Lambda_\beta$ with $\beta\in[\gamma,\lambda]$, then $g_\delta$ cannot be extended to an automorphism of $H_{\bar{\mathbf  n}}$.
	\end{enumerate}

	\begin{enumerate}
		\item[$(\ast)_4$]	It is enough to prove that $\Omega_\lambda\neq \emptyset.$
	\end{enumerate}	
	[Why?	Suppose $\Omega_\lambda$ is nonempty and drive the theorem. To this end, we take $\bar{\mathbf m} \in \Omega_\lambda$ and let ${\mathbf m} = \bigcup_{\alpha < \lambda} {\mathbf m}_\alpha$. We show that
	${\mathbf m}$ is as required.
	It is clear that $\mathbf m \in M_{\mathbf c}$ and  $||\mathbf m||=\lambda.$
	Furthermore, any subgroup of $H_{\mathbf m}/ H_{\mathbf c}$ of size less than $\lambda$ is included in some
	$H_{\mathbf m_\alpha}/ H_{\mathbf c}$, for some $\alpha < \lambda$ and $H_{\mathbf m_\alpha}/ H_{\mathbf c}$ is free, hence
	$\mathbf m \in M_{\mathbf c}$
	is $\lambda$-free over $\mathbf c$.
	Since    {\rm Hom}$(N_{\bold c},\mathbb{G}_{\bold g})$ is trivial for all ${\bold g}\in \mathcal{G}$,
	by Lemma \ref{tri},
	clause $(\beta)$ of the theorem holds.
	Here, we show the clause $(\gamma)$ of the theorem holds as well.
	If  $b \in
	L_{\bold c}$ and $f \in \Hom(N_{\bold c},\mathcal{Z}(H_{\bold m}))$, then $F_{\mathbf m}^b$ is an automorphism of $H_{\mathbf m}$,
	and by Proposition \ref{2b.16}, $g_f \in \aut(H_{\mathbf m}).$  By combining these, we deduce that
	$g = F_{\mathbf m}^b \circ g_f$ is an automorphism.
	Now, let $g \in \aut(H_{\mathbf m})$. We claim that $g$ is of the form
	$g = F_{\mathbf m}^b \circ g_f$, for some  $b \in
	L_{\bold c}$ and $f \in \Hom(N_{\bold c},\mathcal{Z}(H_{\bold m}))$.
	Suppose not, so in particular,   for all $b \in
	L_{\bold c} $ and for all $ f \in \Hom(N_{\bold c},\mathcal{Z}(H_{\bold m}))$, we have:
	$g \neq F_{\mathbf m}^b \circ g_f .
	$
	As $\Hom(N_{\bold c}, \mathcal{Z}(H_{\bold m})) \subseteq \Hom(N_{\bold c},H_{\bold c})$, we have
	$|L_{\bold c}|+|\Hom(N_{\bold c},\mathcal{Z}(H_{\bold m}))| \leq |L_{\bold c}|+|\Hom(N_{\bold c},H_{\bold c})| \leq 2^{\| \mathbf c \|} < \lambda,$
	so we can find some $\alpha_*<\lambda$
	such that for all $\alpha > \alpha_*,$ and all $b, f$ as above,
	$  g \upharpoonright \mathbf m_\alpha \neq (F_{\mathbf m_\alpha}^b \circ g_f) \upharpoonright \mathbf m_\alpha.$ Choose $\chi$ large enough regular
	and let $\bar{\mathcal{B}}=( \mathcal{B}_\alpha\mid \alpha < \lambda    )$ be an increasing and continuous sequence
	of elementary submodels of $(\cH(\chi), \in)$ such that for each $\alpha <\lambda,$
	$\mathcal{B}_\alpha$ has cardinality $<\! \lambda$, $\bar{\mathbf m},  \mathbf c, g  \in \mathcal{B}_\alpha$
	and $( \mathcal{B}_\gamma\mid \gamma \leq \alpha ) \in \mathcal{B}_{\alpha+1}$.
	Set
 $C:=\{\delta > \alpha_*\mid {\mathbf m_\delta} \text{~has universe ~}\delta  \text{~and~} \mathcal{B}_\delta \cap \lambda =\delta  \}.
	$
	Then $C$ is  a club of $\lambda$, hence by the choice of the sequence $( g_\delta\mid \delta \in S )$, the set
	$S'=\{\delta\in C \cap S\mid  g \upharpoonright \delta=g_\delta  \}$ is stationary.
	Let $\delta \in S'.$ In the light of $(\ast)_2$, we conclude that  $g_\delta$ can not be extended to an isomorphism of $H_{\mathbf m}$. But, this is a contradiction, because $g \supseteq g_\delta$
	is such an extension.]
	
	In order to prove that $\Omega_\lambda\neq \emptyset$ we define $U^+({\bar{\mathbf m}}):=\big\{\delta\in U(\bar{\mathbf m})\mid $ there is $\bar{\mathbf n_1}$ such that ${\bar{\mathbf m}} \upharpoonright \delta \leq_{\mathbf c} {\bar{\mathbf n}_1} \in \Lambda_{\delta+1}$ and if ${\bar{\mathbf n_1}} \leq_{\mathbf c} {\bar{\mathbf n}} \in \Lambda_\beta$ with $\beta \in [\gamma, \lambda]$, then $g_\delta$ can not be extended to an automorphism of $H_{{\bar{\mathbf n}}}$$\big\}$. We now prove the following, which in particular, implies that $\Omega_\gamma \neq \emptyset,$
	for all $\gamma \leq \lambda$.\smallskip
	
	$(\ast)_5$ There exists $\bar{\mathbf m}$ of length $\lambda$ such that, for all $\gamma < \lambda$, $\bar{\mathbf m} \upharpoonright \gamma \in \Omega_\gamma.$\smallskip
	
	To this end, we define $\bar{\mathbf m}=( {\mathbf m}_\xi\mid \xi < \lambda ) \in \Lambda_\lambda$ by defining ${\mathbf m}_\xi$ by induction on $\xi.$
	The only non-trivial case is when $\xi=\delta+1 < \lambda$
	and $\delta \in U(\bar{\mathbf m} \upharpoonright \xi).$ Thus suppose that we are given
	$\xi$ and $\delta$ as above and $\bar{\mathbf m} \upharpoonright \delta$ is defined. We have to define ${\mathbf m}_\xi.$
For $\bar{\mathbf m} \in \Lambda_\delta,$ $\alpha \leq\beta< \delta$ and $\bf{g}\in \mathcal{G}_{\bf c}$, let  $\Upsilon$ be the family of all embedding $f: \bf{g}\hookrightarrow\mathcal{Z}(H_{{\bold m}_{\beta}})$  where ${\bold m}_{\beta} $ is $\lambda${-free} over  ${\bold m}_{\alpha}$,  $f(\mathbb{G}_{\bf g}) \cap H^*_{\bf c}=0$ and $ (f(\mathbb{G}_{\bf g})+H^*_{\bf c})/{H^*_{\bf c}} \subseteq_\ast H_{{\bold m}_{\beta}}/ H^*_{\bf c} $.  Now, we define
	\begin{enumerate}	
		\item[$(\ast)_{5.1}$]
		\begin{enumerate}
			\item[$(a)$] $\mathfrak{F}^{\bar{\mathbf m},\bf{g}}_{\alpha}:=\{f\mid f \mbox{ embeds } \bf{g} \mbox{ into }\mathcal{Z}(H_{{\bold m}_{\alpha}})  \}$.
			
			\item[$(b)$]  $\mathfrak{F}^{\bar{\mathbf m},\bf{g}}_{\alpha,\beta}:=\{f\mid f\in\Upsilon\}$.
		\end{enumerate}
		Here, by  an embedding of $\bf{g}$ into $\mathcal{Z}(H_{{\bold m}_{\beta}})$ we mean an embedding from the structure      $\bold {g}=(\mathbb{G}_{\bold g}, F_{\mathbf g}^b)_{b \in L_{\mathbf c}}$ into the structure $(\mathcal{Z}(H_{{\bold m}_{\beta}}), F_{{\mathbf m_\beta}}^b)_{b \in L_{\mathbf c}}$.
	\end{enumerate}
	To continue the proof, we split the argument into several subcases. First of all, we may assume that clause $(\ast)_0$($f_1$) holds and $\delta \in U(\bar{\mathbf m} \upharpoonright \delta)$, as otherwise, we do nothing and let $\mathbf m_\xi=\mathbf m_\delta$.
	\begin{enumerate}	
		\item[$(\ast)_{5.2}$] For $\delta \in U(\bar{\mathbf m}  \upharpoonright \delta)$, we say $\mathbf g \in \mathcal{G}$ is \emph{$1$-active with respect to $(\delta,\bar{\mathbf m} \upharpoonright \delta),$} when there are $\bar{\mathbf n}, \chi, h, \mathcal{B}$ witnessing $\delta \in U(\bar{\mathbf m}  \upharpoonright \delta)$ such that for some nonzero $x\in \mathbb{G}_{\bf{g}}$,
		for arbitrarily  large $\alpha<\delta$,  there are $\beta\in(\alpha,\delta) \setminus S$ and  $f\in\mathfrak{F}^{\bar{\mathbf m}  \upharpoonright \delta,{\bf g}}_{\alpha,\beta}$ such that
		\begin{itemize}
			\item[$(a)$]  $h(f(x))\notin \mathcal{Z}(H_{\mathbf m_\alpha})+ {\rm Im}(f)$,
			
			\item[$(b)$] $\mathcal{Z}(H_{\mathbf m_\alpha}) \cap {\rm Im}(f)=0,$
			
			\item[$(c)$] $h$ maps $H_{\mathbf m_\alpha}$ (resp.  $H_{\mathbf m_\beta}$)  into $H_{\mathbf m_\alpha}$ (resp.  $H_{\mathbf m_\beta}$).
		\end{itemize}
	\end{enumerate}

	$(\ast)_{5.3}$  If $\bf{g}$  is  1-active with respect to $(\delta,\bar{\mathbf m} \upharpoonright \delta)$, and $\xi=\delta+1,$ then for some ${\mathbf m}_\xi$,
	$\bar{\mathbf m} \upharpoonright \xi+1 \in \Lambda_{\xi+1}$ and
	$\delta\in U^+(\bar{\mathbf m} \upharpoonright \xi+1)$.
	
	[Why?	As $\cf(\delta)=\aleph_0$, there is an increasing sequence $( \alpha^0_n\mid n<\omega)\in $ $^\omega \delta$ with limit $\delta$.
	Let $\bar{\mathbf n}, \chi, h, \mathcal{B}$ witness $\delta \in U(\bar{\mathbf m}  \upharpoonright \delta)$ and let $x\in \mathbb{G}_{\bf{g}}$
	be as guaranteed in $(\ast)_{5.2}$.
	Choose $(\alpha_n, \beta_n, f_n)$ by induction on $n$ so that
	\begin{enumerate}
		\item[$(\dag)^1_n$]
		\begin{enumerate}				\item $\alpha_n\in(\alpha^0_n,\delta)\setminus S$, and $\beta_m < \alpha_n < \beta_n$ for $m<n,$
			\item If $n=m+1$ then ${\rm Im}(f_m)\subseteq\mathcal{Z}(H_{{\bold m}_{\alpha_n}}),$
			\item $(\mathbf g, x, \alpha_n,\beta_n, f_n):=(\mathbf g, x, \alpha, \beta, f)$, where    $(\mathbf g, x, \alpha, \beta, f)$ is taken  from the definition of 1-activity.
		\end{enumerate}
	\end{enumerate}
	We are going to define $\bar{\mathbf n}_1\in\Lambda_{\delta+2}$ and $f^*_n$ such that:
	\begin{enumerate}
		\item[$(\dag)^2$]
		\begin{enumerate}
			\item $\bar{\mathbf n}_1\rest (\delta+1)=\bar{\mathbf m}\rest (\delta+1)$,
			
			\item $f^*_n$ embeds $\mathbf g = (\mathbb{G}_{\mathbf g}, (F_{\mathbf g}^b)_{b \in L_{\mathbf c}})$
			into $(H_{(\mathbf n_1)_{\delta+1}}, (F_{{(\mathbf n_1)_{\delta+1}}}^b)_{b \in L_{\mathbf c}})$,
			
			\item If $y\in{\mathbb{G}_{\bf{g}}}$  then $f^\ast_{n+1}(y)- f^\ast_n(y)=-n!f_n(y)$.
		\end{enumerate}
	\end{enumerate}
	Let $\mathcal{Z}(H_{(\mathbf m)_{\delta}})^{\widehat{}}$ denote the $\mathbb{Z}$-adic completion
	of  $\mathcal{Z}({H_{(\mathbf m)_{\delta}}})$. Note that in  $\mathcal{Z}(H_{(\mathbf m)_{\delta}})^{\widehat{}}$, $f^*_n$ is defined as
	$f^*_n(y)=\sum_{m=n}^\infty m!f_m(y).$
	Set $\xi=\sum_{n<\omega} n!$, and define $y_n, \tilde y_n \in \mathcal{Z}(H_{(\mathbf m)_{\delta}})^{\widehat{}}$ by $y_n= f_n^\ast(x)$ and $\tilde{y}_n=y_n+ \xi f_0(x)$.
	Recall that $h \restriction \delta=g_\delta$ extends to an automorphism $\hat{g_\delta}$ over $\mathcal{Z}(H_{(\mathbf m)_{\delta}})^{\widehat{}}$. We show that  either $\hat{g_\delta}(y_0)\notin   \langle \mathcal{Z}(H_{\mathbf m_{\delta}}) \cup \{ y_m\mid m<\omega\} \rangle $ or $\hat{g_\delta}(\tilde{y}_0)\notin  \langle \mathcal{Z}(H_{\mathbf m_{\delta}}) \cup \{ \tilde{y}_m\mid m<\omega\} \rangle$, and then we choose $z_0 \in \{y_0, \tilde{y}_0\}$  such that  $\hat{g_\delta}(z_0)\notin \langle \mathcal{Z}(H_{\mathbf m_{\delta}}) \cup \{ \tilde{z}_m\mid m<\omega\} \rangle$ and set
	$$\mathcal{Z}(H_{\mathbf (\mathbf n_1)_{\delta+1}}):=\langle \mathcal{Z}(H_{\mathbf m_{\delta}}) \cup \{ {z}_m\mid m<\omega\}
	\rangle\subseteq \mathcal{Z}(H_{(\mathbf m)_{\delta}})^{\widehat{}}.$$
	This easily gives us  $\bar{\mathbf n}_1\in\Lambda_{\delta+2}$, and it will be as required.
	Let us depict things:
	$$\xymatrix{
		&\mathcal{Z}(H_{\mathbf m_\delta})\ar[r]^{\subseteq}&   \mathcal{Z}(H_{\mathbf (\mathbf n_1)_{\delta+1}})\ar[r]^{\subseteq}&\mathcal{Z}(H_{(\mathbf m)_{\delta}})^{\widehat{}}	\\&  \mathcal{Z}(H_{\mathbf m_\delta})\ar[r]^{\subseteq}\ar[u]^{{g_\delta}}&   \mathcal{Z}(H_{\mathbf (\mathbf n_1)_{\delta+1}})\ar[u]^{\nexists\hat{g_\delta}\rest}\ar[r]^{\subseteq}&\mathcal{Z}(H_{(\mathbf m)_{\delta}})^{\widehat{}}\ar[u]^{\hat{g_\delta}}
		&&&}$$
	So,  suppose by way of contradiction that
	$\hat{g_\delta}(y_0)\in   \langle \mathcal{Z}(H_{\mathbf m_{\delta}}) \cup \{ y_m\mid m<\omega\} \rangle $ and $\hat{g_\delta}(\tilde{y}_0)\in  \langle \mathcal{Z}(H_{\mathbf m_{\delta}}) \cup \{ \tilde{y}_m\mid m<\omega\} \rangle$,  and we search for a contradiction.
	As $\hat{g_\delta}(y_0)\in   \langle \mathcal{Z}(H_{\mathbf m_{\delta}}) \cup \{ y_m\mid m<\omega\} \rangle $. There are $z \in \mathcal{Z}(H_{\mathbf n_{\delta}})$,  $n<\omega $ and $\{\ell_i \in \mathbb{Z}\}_{i<n}$  such that
	$
	\hat{g_\delta}(y_0) \in \mathcal{Z}(H_{\mathbf n_{\delta}}) + \sum_{i<n} \ell_i y_i.
	$
	It is easily seen that
	$\sum_{i<n} \ell_i y_i \in \mathcal{Z}(H_{\mathbf n_{\delta}}) + ( \sum_{i<n} \ell_i)y_n.$  Set
	$\ell:= \sum_{i<n} \ell_i.$
	Hence
	$\hat{g_\delta}(y_0)=\ell y_n+z$ where  $z \in \mathcal{Z}(H_{\mathbf n_{\delta}})$.
	By definition,  $\hat{g_\delta}(\tilde{y}_0)=\hat{g_\delta}(y_0+\xi f_0(x))=\ell y_n+z+\xi g_\delta  (  f_0(x)).$
	In other words,  $\hat{g_\delta}(\tilde{y}_0) -(\ell\tilde{y}_n+z)=\xi  (  g_\delta f_0(x)-\ell f_{0}(x))$.
	We now recall that
	$\langle \mathcal{Z}(H_{\mathbf m_{\delta}})  \cup \{ \tilde{y}_m\mid m<\omega\} \rangle \cap \xi \langle \mathcal{Z}(H_{\mathbf m_{\delta}})  \cup \{ \tilde{y}_m\mid m<\omega\} \rangle=0$. Finally,
	due to our assumption,  $\hat{g_\delta}(\tilde{y}_0)
	\in  \langle \mathcal{Z}(H_{\mathbf m_{\delta}}) \cup \{ \tilde{y}_m\mid m<\omega\} \rangle$,  thus we have $  g_\delta f_0(x)-\ell f_{0}(x)=0$, but this is absurd by the choice of $f_0$.]

	\begin{enumerate}
		\item[$(\ast)_{5.4}$] For $\delta \in U(\bar{\mathbf m}  \upharpoonright \delta)$, we say $\mathbf g \in \mathcal{G}$ is \emph{$2$-active with respect to $(\delta,\bar{\mathbf m} \upharpoonright \delta),$} when there are $\bar{\mathbf n}, \chi, h, \mathcal{B}$ witnessing $\delta \in U(\bar{\mathbf m}  \upharpoonright \delta)$ such that there are nonzero $x\in \mathbb{G}_{\bf{g}}$,
		$\alpha<\beta $ in $ \delta \setminus S$,   $f_1, f_2 \in\mathfrak{F}^{\bar{\mathbf m}  \upharpoonright \delta,{\bf g}}_{\alpha,\beta}$, and  $y_1 \neq y_2$ in $\mathbb{G}_{\mathbf g}$  such that:
		\begin{itemize}
			\item[$(a)$]  $h(f_\ell(x))\in \mathcal{Z}(H_{\mathbf m_\alpha})+ f_\ell(y_\ell)$, for $\ell=1, 2,$
			
			\item[$(b)$] $h$ maps $H_{\mathbf m_\alpha}$ (resp.  $H_{\mathbf m_\beta}$)  into $H_{\mathbf m_\alpha}$ (resp.  $H_{\mathbf m_\beta}$).
		\end{itemize}
	\end{enumerate}		
	
	\begin{enumerate}
		\item[$(\ast)_{5.5}$]  If $\bf{g}$  is  2-active with respect to $(\delta,\bar{\mathbf m} \upharpoonright \delta)$, and $\xi=\delta+1,$ then for some ${\mathbf m}_\xi$,
		$\bar{\mathbf m} \upharpoonright (\xi+1) \in \Lambda_{\xi+1}$ and
		$\delta\in U^+(\bar{\mathbf m} \upharpoonright (\xi+1))$.
	\end{enumerate}		
	[Why? Let $\alpha, \beta, f_1, f_2, y_1, y_2$  be as in $(\ast)_{5.4}$. For each $\alpha'$ with $\beta < \alpha' < \delta$, we can find $\beta' \in (\alpha',\delta) \setminus S $ and $f_3 \in \mathfrak{F}^{\bar{\mathbf m}  \upharpoonright \delta,{\bf g}}_{\alpha',\beta'}$
	such that $\mathcal{Z}(H_{\mathbf m_{\alpha'}}) \cap {\rm Im}(f_3)=0$ and  $h(f_3(x))=f_3(y_3)$ for some $y_3 \in \mathcal{G}_{\bf g}$. Then for some $\ell\in \{1, 2\}, y_3 \neq y_\ell.$ Let us suppose without loss of generality that $y_3 \neq y_1.$
	Define $f: \mathcal{G}_{\bf g} \rightarrow \mathcal{Z}(H_{\mathbf m_{\beta'}})$ as $f(z)=f_1(z) - f_3(z)$.
	Then $f \in \mathfrak{F}^{\bar{\mathbf m}  \upharpoonright \delta,{\bf g}}_{\alpha',\beta'}$ is such that  $ h(f(x))\notin \mathcal{Z}(H_{\mathbf m_{\alpha'}})+ {\rm Im}(f)$,
	$\mathcal{Z}(H_{\mathbf m_{\alpha'}}) \cap {\rm Im}(f)=0,$
	and $h$ maps $H_{\mathbf m_{\alpha'}}$ and $H_{\mathbf m_{\beta'}}$ into themselves.
	It follows that $\bf{g}$  is 1-active with respect to $(\delta,\bar{\mathbf m} \upharpoonright \delta)$, and $\xi=\delta+1,$ and we are done by $(\ast)_{5.3}$.
	]

	\begin{enumerate}
		\item[$(\ast)_{5.6}$] For $\delta \in U(\bar{\mathbf m}  \upharpoonright \delta)$, we say $\mathbf g \in \mathcal{G}$ is \emph{$3$-active with respect to $(\delta,\bar{\mathbf m} \upharpoonright \delta),$} when there are $\bar{\mathbf n}, \chi, h, \mathcal{B}$ witnessing $\delta \in U(\bar{\mathbf m}  \upharpoonright \delta)$ and  there exist
		$(y,x,z)\in\mathbb{G}_{\bf g} \times
		\mathbb{G}_	{\bf{g}} \times \mathcal{Z}(H_{{\bold m}_{\delta}})$ with $x, z \neq 0,$  $\alpha<\delta$,   $\beta\in[\alpha,\delta) \setminus S$
	and  $f\in\mathfrak{F}^{\bar{\mathbf m} \upharpoonright \delta,\bf{g}}_{\alpha,\beta}$ such that $h(f(x))=  z+f(y)$.
		Also, $h$ maps $H_{\mathbf m_\alpha}$ (resp.  $H_{\mathbf m_\beta}$)  into $H_{\mathbf m_\alpha}$ (resp.  $H_{\mathbf m_\beta}$).
	\end{enumerate}		
	\begin{enumerate}
		\item[$(\ast)_{5.7}$]  If $\bf{g}$  is  3-active with respect to $(\delta,\bar{\mathbf m} \upharpoonright \delta)$, and $\xi=\delta+1,$ then for some ${\mathbf m}_\xi$,
		$\bar{\mathbf m} \upharpoonright (\xi+1) \in \Lambda_{\xi+1}$ and
		$\delta\in U^+(\bar{\mathbf m} \upharpoonright (\xi+1))$.
	\end{enumerate}		
[Why? According to $(\ast)_{5.5}$  it is enough to show that 	$\bf{g}$  is  2-active with respect to $(\delta,\bar{\mathbf m} \upharpoonright \delta)$.
Let $\bar{\mathbf n}, \chi, h, \mathcal{B}$ witness $\delta \in U(\bar{\mathbf m}  \upharpoonright \delta)$, and fix $y, x, z, \alpha < \beta$ and $f$ as in $(\ast)_{5.6}$.
Thanks to the 3-activity assumption, we have $h(f(x))=z+f(y)$. First we claim that $y\neq 0$. Otherwise, $h(f(x))=z \in \mathcal{Z}(H_{\mathbf m_\alpha})$. Following   the choice of $h$, we have $f(x) \in \mathcal{Z}(H_{\mathbf m_\alpha})$.
But ${\rm Im}(f) \cap \mathcal{Z}(H_{\mathbf m_\alpha})=0$, hence $f(x)=0,$ which implies $x=0$, and this contradicts the assumption $x \neq 0.$ Consequently, $y\neq 0$.
Now, we set $f_1:=f$, $f_2:=2f$, $y_1:=y$ and $y_2:=2y$.
Since
$y\neq 0$, $y_1\neq y_2$. This witness $\bf{g}$  is  2-active with respect to $(\delta,\bar{\mathbf m} \upharpoonright \delta)$,
as claimed.]

	\begin{enumerate}
		\item[$(\ast)_{5.8}$] We say $\delta \in U(\bar{\mathbf m}  \upharpoonright \delta)$ is \emph{nice} if there is no $\mathbf g \in \mathcal{G}$ which  is $\ell$-active with respect to $(\delta,\bar{\mathbf m} \upharpoonright \delta),$ for some $\ell\in\{1, 2, 3\}.$
	\end{enumerate}	
	\begin{enumerate}
		\item[$(\ast)_{5.9}$]
		Let	$\delta \in U(\bar{\mathbf m}  \upharpoonright \delta)$ be nice. Then there is a sequence $\bar h= ( h_{\mathbf g}\mid \mathbf g \in \mathcal{G} )$ such that
		for $\alpha<\delta$ large enough, and for any $f\in\bigcup\{\mathfrak{F}^{\bar{\mathbf m} \upharpoonright \delta,\bf{g}}_{\alpha,\beta}\mid \beta\in[\alpha,\delta)\}$, the following implication is valid:
		\[
		x\in\mathbb{G}_{\bf g} \Longrightarrow {g}_\delta(f(x))=f(h_{{\bf g} }(x)). ~ 
		\]
	\end{enumerate}
	[Why? Let   $\mathbf g \in \mathcal{G}$ and $x \in \mathbb{G}_{\bf g}$. Since  $\bf g$ is not 1-active with respect to $(\delta,\bar{\mathbf m} \upharpoonright \delta),$ and following its definition,    we can find some $\alpha < \delta$ such that  ${g}_\delta(f(x)) \in \mathcal{Z}(H_{\mathbf m_{\alpha}})+{\rm Im}(f) $ for all $f\in\bigcup\{\mathfrak{F}^{\bar{\mathbf m} \upharpoonright \delta,\bf{g}}_{\alpha,\beta}\mid \beta\in[\alpha,\delta)\}$.
Hence there is some $z \in \mathcal{Z}(H_{\mathbf m_{\alpha}})$ and $y \in \mathbb{G}_{\bf g} $ so that ${g}_\delta(f(x))=z+f(y)$. Recall that $\bf g$ is not 3-active. This forces $z=0.$
Applying this in the previous formula, gives us ${g}_\delta(f(x))=f(y).$ Since both of $g_\delta$ and $f$ are injective, $y$ is uniquely determined via $h$ and ${\bf g} $, so let $y=h_{{\bf g} }(x)$. Then $h_{{\bf g} }$ is as required.]
	
	Let $\alpha_* < \delta$ be such that $(\ast)_{5.9}$ holds for all $\alpha \geq \alpha_*.$
	\begin{enumerate}
		\item[$(\ast)_{5.10}$]
		If	$\delta \in U(\bar{\mathbf m}  \upharpoonright \delta)$ is nice, then:
		\begin{enumerate}
			\item For all $\alpha<\delta$ and $f\in\bigcup\{\mathfrak{F}^{\bar{\mathbf m} \upharpoonright \delta,\bf{g}}_{\alpha,\beta}\mid \beta\in[\alpha,\delta)\}$, we have ${g}_\delta(f(x))=f(h_{{\bf g} }(x))$ for $x\in\mathbb{G}_{\bf g}$.
			\item Let $\mathbf g \in \mathcal{G}$
			and $f$ an embedding from $(\mathbb{G}_{\mathbf g}, F_{\mathbf g}^b)_{b \in L_{\mathbf c}}$ into $(\mathcal{Z}(H_{\mathbf m_\delta}), F_{\mathbf m_\delta}^b)_{b \in L_{\mathbf c}}$ and let $x \in \mathbb{G}_{\mathbf g}.$
			Then there is a sequence $\langle b_i, s_i\mid i<n  \rangle$  with $b_i \in  L_{\mathbf c}$
			and $s_i \in \mathbb{Z}\setminus \{0\}$ such that
			$
			g_\delta(f(x)) - \sum_{i=1}^n s_i F_{\mathbf m_\delta}^{b_i}(f(x)) \in \mathcal{Z}(H_{\mathbf m_{\alpha_*}}).
			$
		\end{enumerate}
	\end{enumerate}
	[Why? (a):  Suppose not. Let $\alpha < \delta$ and $f_1\in \mathfrak{F}^{\bar{\mathbf m} \upharpoonright \delta,\bf{g}}_{\alpha, \beta}$ be a counterexample, where $\beta > \alpha, \alpha_*$. Thus for some $x\in\mathbb{G}_{\bf g}$, we have ${g}_\delta(f_1(x))\neq f_1(h_{{\bf g} }(x))$.  We may further assume that $\alpha_*> \alpha$ (as $(\ast)_{5.9}$ works for any ordinal in $(\alpha_*, \delta)$ as well).
	Let $ f_2\in \mathfrak{F}^{\bar{\mathbf m} \upharpoonright \delta,\bf{g}}_{\alpha_*,\beta}$ be such that $\mathcal{Z}(H_{\mathbf m_{\alpha_*}}) \cap {\rm Im}(f_2)=0$. It  turns out that  $f:=f_1+f_2 \in \mathfrak{F}^{\bar{\mathbf m} \upharpoonright \delta,\bf{g}}_{\alpha_*,\beta}$, thus by $(\ast)_{5.9}$,
	$ {g}_\delta(f(x))=f(h_{{\bf g} }(x)).$
	This means that
 ${g}_\delta(f_1(x)) + {g}_\delta(f_2(x)) = f_1(h_{{\bf g} }(x)) + f_2(h_{{\bf g} }(x)).$  Hence as ${g}_\delta(f_2(x)) =  f_2(h_{{\bf g} }(x))$,
	we  have ${g}_\delta(f_1(x))= f_1(h_{{\bf g} }(x)),$  a contradiction.

	(b): This is true as $g_\delta(f(x)) \in \langle \mathcal{Z}(H_{\mathbf m_{\alpha_*}}) \cup \{ F_{\mathbf m_\delta}^b(f(x))\mid b \in L_{\mathbf c} \} \rangle.$]
	
	\begin{enumerate}
		\item[$(\ast)_{5.11}$]
		Let	$\delta \in U(\bar{\mathbf m}  \upharpoonright \delta)$ be nice.
		Then there is a sequence $( b^*_i, s^*_i\mid i<n  )$  where $b^*_i \in  L_{\mathbf c}$ and
		$s^*_i \in \mathbb{Z}\setminus \{0\}$ such that for all $\mathbf g \in \mathcal{G}$, and any embedding $f:  (\mathbb{G}_{\mathbf g}, F_{\mathbf g}^b)_{b \in L_{\mathbf c}}\to  (\mathcal{Z}(H_{\mathbf m_\delta}), F_{\mathbf m_\delta}^b)_{b \in L_{\mathbf c}}$ we have
		$$x\in\mathbb{G}_{\bf g} \Rightarrow g_\delta(f(x)) - \sum_{i<n}s^*_i F_{\mathbf m_\delta}^{b^*_i}(f(x)) \in \mathcal{Z}(H_{\mathbf m_{\alpha_*}}).$$
	\end{enumerate}
	[Why? Fix $\mathbf g_*  \in \mathcal{G}_{\bf g}$  and
	$x_* \in \mathcal{G}_{\bf g}$. Due to $(\ast)_{5.10}(b)$ applied to  $\mathbf g_* \rest_{{\rm cl}\{x_*\}},$ we can find a sequence $( b^*_i, s^*_i\mid i<n  )$
	as there.
	Let $\mathbf g \in \mathcal{G}$ and suppose $f$ is an embedding from $(\mathbb{G}_{\mathbf g}, F_{\mathbf g}^b)_{b \in L_{\mathbf c}}$ into $(\mathcal{Z}(H_{\mathbf m_\delta}), F_{\mathbf m_\delta}^b)_{b \in L_{\mathbf c}}$ and $x \in \mathbb{G}_{\mathbf g}.$ By replacing $\mathbf g$ by $\mathbf g \rest_{{\rm cl}\{x\}} $, we may assume that $\mathbb{G}_{\mathbf g}= \operatorname{cl}\{x\}$ is one-generated (see Definition \ref{def3}(4)(b)). Let
	$\phi: \operatorname{cl}\{x_*\} \rightarrow \operatorname{cl}\{x\}$ be such that $\phi(x_*)=x,$ and set $\tilde f:=f \circ \phi$. Note that $\phi$ is an embedding from the structure
	$(\mathbb{G}_{\mathbf g_* \rest_{{\rm cl}\{x_*\}}}, F_{\mathbf g_* \rest_{{\rm cl}\{x_*\}}}^b)_{b \in L_{\mathbf c}}$ into $(\mathbb{G}_{\mathbf g \rest_{{\rm cl}\{x\}}}, F_{\mathbf g \rest_{{\rm cl}\{x\}}}^b)_{b \in L_{\mathbf c}}$ and $\tilde f$ is an
	embedding from the structure
	$(\mathbb{G}_{\mathbf g_* \rest_{{\rm cl}\{x_*\}}}, F_{\mathbf g_* \rest_{{\rm cl}\{x_*\}}}^b)_{b \in L_{\mathbf c}}$ into $(\mathcal{Z}(H_{\mathbf m_\delta}), F_{\mathbf m_\delta}^b)_{b \in L_{\mathbf c}}$ and $x \in \mathbb{G}_{\mathbf g}.$
	Following our assumption, it implies that
	$g_\delta(\tilde f(z_e)) - \sum_{i=1}^n s^*_i F_{\mathbf m_\delta}^{b^*_i}(\tilde f(z_e)) \in \mathcal{Z}(H_{\mathbf m_{\alpha_*}}).$
	Recall that $f, \tilde f$ and $\phi$ are embeddings that respect the structures. So,
		$$F_{\mathbf m_\delta}^{b^*_i}(\tilde f(z_e))\stackrel{}= \tilde f (F_{\mathbf g_* \rest_{{\rm cl}\{x_*\}}}^{b^*_i}(z_e)) =f (\phi(F_{\mathbf g_* \rest_{{\rm cl}\{x_*\}}}^{b^*_i}(z_e)))\stackrel{}= f(F_{\mathbf g \rest_{{\rm cl}\{x\}}}^{b^*_i}(x))\stackrel{}= F_{\mathbf g \rest_{{\rm cl}\{x\}}}^{b^*_i}(f(x)).$$
	But note that $F_{\mathbf g \rest_{{\rm cl}\{x\}}}^{b^*_i}(f(x))=F_{\mathbf g}^{b^*_i}(f(x)),$  hence
	\[
	g_\delta(f(x)) - \sum_{i=1}^n s^*_i F_{\mathbf m_\delta}^{b^*_i}(f(x)) = g_\delta(\tilde f(z_e))- \sum_{i=1}^n s^*_i F_{\mathbf m_\delta}^{b^*_i}(\tilde f(z_e)) \in \mathcal{Z}(H_{\mathbf m_{\alpha_*}}).\space\ ]
	\]
	$(\ast)_6$: Let us derive the desired presentation in the above nice case.
	Recall that each element of $\mathcal{Z}(H_{\mathbf m_{\delta}})$ is of the form $f(x)$ for some $f\in\mathfrak{F}^{\bar{\mathbf m} \upharpoonright \delta,\bf{g}}_{\alpha, \beta}$ and $x\in \mathbb{G}_{\mathbf g}$, and also recall that $h_{\mathbf c}: H_{\mathbf c} \rightarrow N_{\mathbf c}$ is an epimorphism  with $\Ker(h_{\mathbf c})=\mathcal{Z}(H_{\mathbf c})$. These yield  an automorphism $\pi \in \aut(N_{\mathbf c})$
	via the assignment
	$ h_{\mathbf c}(f(x))\mapsto h_{\mathbf c}(g_\delta(f(x))).$  In view of $(\ast)_{5.11}$ we set $F_{\mathbf c}^{{\bar s^*}, {\bar b^*}} = \sum_{i=1}^n s^*_i F_{\mathbf c}^{b^*_i}$.
	Thanks to clause (1) from our assumption, for some $b \in L_{\mathbf c}$ one has $ F_{\mathbf m_\delta}^b \rest_{H_{\mathbf c}} = F_{\mathbf c}^{{\bar s^*}, {\bar b^*}}$.  By the way we extended the functions, $F_{\mathbf m_\delta}^b= F_{\mathbf m_\delta}^{{\bar s^*}, {\bar b^*}}$, and following its definition, we have
	$\pi(h_{\mathbf c}(f(x)))=h_{\mathbf c}(F_{\mathbf m_\delta}^b(f(x)))$  for all $f$ and $x$ as above.
	Let $g_1:= (F_{\mathbf m_\delta}^b)^{-1} \circ g_\delta, $ and recall that $g_1 \rest_{\mathcal{Z}(H_{\mathbf m_\delta})}=\id$
	and  $g_1(t) \in t\cdot \mathcal{Z}(H_{\mathbf m_\delta})$ for all $t \in H_{\mathbf m_\delta}$. According to Proposition \ref{2b.16}, there
	is  some $f \in \Hom(N_{\mathbf c}, \mathcal{Z}(H_{\mathbf m_\delta}))$ so that $g_1=g_{f}$.
Due to clause (1) of the theorem and in the light of Lemma \ref{tri} we observe that $f \in \Hom(N_{\mathbf c},  \mathcal{Z} (H_{\mathbf c})).$ It follows that
	$h \restriction \delta=g_\delta= F_{\mathbf m_\delta}^b \circ g_1=F_{\mathbf m_\delta}^b \circ g_f$. This is a contradiction to
	$(\ast)_1(d+g)$.

	In sum, we have proved that
	$ \Omega_\gamma\neq \emptyset,$  completing the proof of 	$(\ast)_6$. Recall from clause $(\ast)_4$ that this completes
	the proof of the theorem.
\end{proof}
\begin{lemma}
	\label{t11} Assume
	$\mathbf c\in\mathbf C^4_{\rm{aut}}$
	is such that  $|H_{\mathbf c}| > 2^{|L_ {\mathbf c}|+ \aleph_0}$.  Then we can extend $\mathbf c$
	to some $(\mathbf c, \mathcal{G}) \in \mathbf C^6_{\rm{aut}}$.
\end{lemma}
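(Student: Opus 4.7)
The plan is to define $\mathcal G$ as the minimal collection of isomorphism types forced into it by clause (c) of Definition~\ref{def3}(4). Concretely, set
\[
\mathcal G := \{[\mathbf g] : (\mathbf c, \mathbf g) \in \mathbf C_{\rm{aut}}^5, \ \mathbf g \text{ embeds into } (\mathcal Z(H_{\mathbf c}), (F_{\mathbf c}^\ell)_{\ell \in L_{\mathbf c}}), \text{ and } \mathbf g = \mathbf g \rest \text{cl}\{x\} \text{ for some } x \in \mathbb G_{\mathbf g}\}.
\]
Each such $\mathbf g$ has $|\mathbb G_{\mathbf g}| \leq |L_{\mathbf c}| + \aleph_0$, since $\mathbb G_{\mathbf g}$ is the pure closure of the single $L_{\mathbf c}$-orbit of $x$, so $\mathcal G$ is a genuine set of cardinality at most $2^{|L_{\mathbf c}|+\aleph_0}$. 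Clause (c) of Definition~\ref{def3}(4) then holds by the very definition of $\mathcal G$.

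For clause (b), let $\mathbf g \in \mathcal G$ and $y \in \mathbb G_{\mathbf g}$, and put $\mathbf g' := \mathbf g \rest \text{cl}\{y\}$. As a pure subgroup of the reduced torsion-free abelian group $\mathbb G_{\mathbf g}$, $\mathbb G_{\mathbf g'}$ is itself reduced and torsion-free; $J_p$-freeness and the relations from $Q_{\mathbf c}^{\bar s}$ descend to subgroups; and each $F_{\mathbf g}^\ell$ restricts to an automorphism of $\text{cl}\{y\}$ by the closure property in the definition of $\text{cl}$. Hence $(\mathbf c, \mathbf g') \in \mathbf C_{\rm{aut}}^5$. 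Composing $\text{cl}\{y\} \hookrightarrow \mathbb G_{\mathbf g} \hookrightarrow \mathcal Z(H_{\mathbf c})$ embeds $\mathbf g'$ into $(\mathcal Z(H_{\mathbf c}), (F_{\mathbf c}^\ell)_{\ell \in L_{\mathbf c}})$, and $\mathbf g'$ is cyclic by construction. Therefore $\mathbf g' \in \mathcal G$.

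For non-emptiness (clause (a)), the trivial tuple $\mathbf g_0$ with $\mathbb G_{\mathbf g_0} = 0$ and each $F_{\mathbf g_0}^\ell = 0$ vacuously satisfies all conditions and lies in $\mathcal G$, formally establishing $(\mathbf c, \mathcal G) \in \mathbf C_{\rm{aut}}^6$. The hypothesis $|H_{\mathbf c}| > 2^{|L_{\mathbf c}|+\aleph_0}$ is used to ensure that $\mathcal G$ is substantially richer: since $|\mathcal Z(H_{\mathbf c})| = |H_{\mathbf c}|$ (because $|H_{\mathbf c}/\mathcal Z(H_{\mathbf c})| = |N_{\mathbf c}| \leq |L_{\mathbf c}|$) while the quotient $\mathcal Z(H_{\mathbf c})/\mathcal Z(H_{\mathbf c}^*)$ is torsion-free and $\mathbb P_{\mathbf c}$-divisible by Definition~\ref{def3}(2), abundant elements $x$ of $\mathcal Z(H_{\mathbf c})$ lying outside $\mathcal Z(H_{\mathbf c}^*)$ produce, via their $L_{\mathbf c}$-orbit and subsequent pure closure in $\mathcal Z(H_{\mathbf c})$, non-trivial embeddable cyclic $\mathbf g$'s.

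The main obstacle I anticipate is verifying $J_p$-freeness for these non-trivial $\mathbf g$'s: $\mathbb P_{\mathbf c}$-divisibility of the ambient quotient does not by itself preclude $J_p$ from embedding into countable pure subgroups. This is handled by a rank/completeness argument exploiting that each $\text{cl}\{x\}$ has rank at most countable, so it cannot contain $J_p$ (which would require uncountable rank in any subgroup of $\mathcal Z(H_{\mathbf c})$), together with the torsion-free constraint inherited from the $\mathbf C_{\rm{aut}}^4$ hypothesis. With (a), (b), and (c) all verified, the required $(\mathbf c, \mathcal G) \in \mathbf C_{\rm{aut}}^6$ is exhibited.
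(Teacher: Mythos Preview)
Your definition of $\mathcal{G}$ as the collection of all one-generated $\mathbf g$'s with $(\mathbf c,\mathbf g)\in\mathbf C^5_{\rm aut}$ embedding into $(\mathcal{Z}(H_{\mathbf c}),(F_{\mathbf c}^\ell)_{\ell})$ is the natural one, and your verification of clauses (b) and (c) is fine. The problem is entirely with non-emptiness beyond the trivial $\mathbf g_0$. Notice that with $\mathbf g_0$ alone you never use the hypothesis $|H_{\mathbf c}|>2^{|L_{\mathbf c}|+\aleph_0}$; this is a warning sign. The lemma is stated with that hypothesis precisely because the intended content (and what the applications in Theorem~\ref{5h.11} and Corollary~\ref{a25} need) is a \emph{non-trivial} member of $\mathcal{G}$, and your ``richness'' paragraph does not deliver one.

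Concretely, your argument for a non-trivial $\mathbf g$ fails at two points. First, the pure $L_{\mathbf c}$-closure of an $x\in\mathcal{Z}(H_{\mathbf c})\setminus\mathcal{Z}(H^*_{\mathbf c})$ taken inside $\mathcal{Z}(H_{\mathbf c})$ need not be torsion-free: $\mathcal{Z}(H_{\mathbf c})$ is only assumed reduced, not torsion-free, and for $p\in\Prime_1(H_{\mathbf c})$ there is genuine $p$-torsion present. Second, your $J_p$-exclusion via ``rank at most countable'' is simply false once $|L_{\mathbf c}|\ge 2^{\aleph_0}$: the $L_{\mathbf c}$-orbit of $x$ then has size up to $|L_{\mathbf c}|$, so the closure can have rank $\ge 2^{\aleph_0}$ and a cardinality argument alone does not rule out an embedded copy of $J_p$.

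The paper's proof avoids both obstacles with a different idea. One enumerates $(2^{|L_{\mathbf c}|+\aleph_0})^+$ many elements $x_\alpha\in\mathcal{Z}(H_{\mathbf c})\setminus H^*_{\mathbf c}$, forms for each the minimal subgroup $\mathbb{G}_\alpha\supseteq\{x_\alpha\}\cup(H^*_{\mathbf c}\cap\mathcal{Z}(H_{\mathbf c}))$ that is $F_{\mathbf c}^b$-closed and pure modulo $H^*_{\mathbf c}\cap\mathcal{Z}(H_{\mathbf c})$, and then uses pigeonhole (there are at most $2^{|L_{\mathbf c}|+\aleph_0}$ isomorphism types of such structures) to find $\alpha\neq\beta$ with $\mathbb{G}_\alpha\cong\mathbb{G}_\beta$ via an isomorphism fixing $H^*_{\mathbf c}\cap\mathcal{Z}(H_{\mathbf c})$ pointwise. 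The desired $\mathbb{G}_{\mathbf g}$ is then the set of \emph{differences} $\{x_{\alpha,\ell}-x_{\beta,\ell}\}$. This difference trick forces $\mathbb{G}_{\mathbf g}\cap H^*_{\mathbf c}=0$, so $\mathbb{G}_{\mathbf g}$ injects into the torsion-free group $\mathcal{Z}(H_{\mathbf c})/\mathcal{Z}(H^*_{\mathbf c})$ guaranteed by $\mathbf C^4_{\rm aut}$, solving the torsion problem outright. This counting-plus-difference construction is the missing idea in your argument.
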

\begin{proof}
	It suffices to define $\mathcal{G}.$ First,
	we say $\bf g$ is \emph{one-generated}, provided  there is some $x \in \mathcal{Z}(H_{\bf c})$ so that $\mathbb{G}_{\bf g} / ( H^*_{\bf c} \cap \mathcal{Z}(H_{\bf c}))$
	is the smallest pure subgroup of $ \mathcal{Z}(H_{\bf c}) / ( H^*_{\bf c} \cap \mathcal{Z}(H_{\bf c}))$
	to which $x$ belongs and that is closed under $F_{\bf g}^b$'s for $b \in L_{\bf c}.$
Let $\mathcal{G} $ consist of all $\mathbf g$ such that:
	\begin{enumerate}
		\item ${\bf g}=(\mathbb{G}_{\bf g}, (F_{\bf {g}}^b)_{b \in L_{\bf c}})$,  where $\mathbb{G}_{\bf g} \subseteq \mathcal{Z}(H_{\bf c}),$
		
		\item $\mathbb{G}_{\bf g}$ is a torsion-free subgroup such that $\mathbb{G}_{\bf g} \cap H^*_{\bf c}=0,$
		
		\item $\bf g$ is one-generated,
		
		\item $|\mathbb{G}_{\bf g}| \leq |L_{\bf c}|+\aleph_0$.
	\end{enumerate}
	\begin{enumerate}
		\item[$(\ast)_{1}$] Denoting the isomorphic structures equivalence relation with $\cong$, we have:
		\begin{enumerate}
			\item $|\mathcal{G}/\cong| \leq 2^{|L_{\bf c}|+\aleph_0}$.
			
			\item If $\mathcal{G} \neq \emptyset,$ then $(\bf c, \mathcal{G}) \in \mathbf C^6_{\rm{aut}}.$
			
			\item If $\bf g \in \mathcal{G}$, then $\Hom(N_{\bf c}, \mathbb{G}_{\bf g})=0.$
		\end{enumerate}
	\end{enumerate}
	
	\begin{enumerate}
		\item[$(\ast)_{2}$] Assume  $|H_{\mathbf c}| > 2^{|L_ {\mathbf c}|+ \aleph_0}$. Then $\mathcal{G} \neq \emptyset$.
	\end{enumerate}
	[Why?
First, note that  $|\mathcal{Z}(H_{\mathbf c})|\geq 2^{|L_ {\mathbf c}|+\aleph_0}$. From this, we can find a sequence ${(  x_\alpha\mid \alpha < 2^{|L_ {\mathbf c}|+\aleph_0}    )}$
	of distinct elements of  $\mathcal{Z}(H_{\mathbf c}) \setminus H^*_{\bf c}$. For each $\alpha$, let $\mathbb{G}_\alpha$ be minimal such that:
	\begin{enumerate}[(a)]
		\item $\{x_\alpha\} \cup ( H^*_{\bf c} \cap \mathcal{Z}(H_{\bf c})) \subseteq \mathbb{G}_\alpha,$
		
		\item $\mathbb{G}_\alpha$ is closed under the action of $F_{\bf c}^b$, for $b \in L_{\bf c}$,
		
		\item $\mathbb{G}_\alpha/ ( H^*_{\bf c} \cap \mathcal{Z}(H_{\bf c}))$ is a pure subgroup of $ \mathcal{Z}(H_{\bf c}) / ( H^*_{\bf c} \cap \mathcal{Z}(H_{\bf c}))$,
		
		\item $|\mathbb{G}_\alpha| \leq |L_{\bf c}|+\aleph_0.$
	\end{enumerate}
	For each $\alpha$ let $( x_{\alpha, \ell}\mid \ell <   |\mathbb{G}_\alpha| )$ enumerate $\mathbb{G}_\alpha$ so that the elements of $ H^*_{\bf c} \cap \mathcal{Z}(H_{\bf c})$ are enumerated first.
	Then for some $\alpha < \beta < 2^{|L_ {\mathbf c}|+\aleph_0}$ we have $|\mathbb{G}_\alpha|=|\mathbb{G}_\beta|$
	and $\{(x_{\alpha, \ell}, x_{\beta, \ell})\mid \ell < |\mathbb{G}_\alpha|  \}$ is an isomorphism from $\mathbb{G}_\alpha$
	onto $\mathbb{G}_\beta$ which is identity on $ H^*_{\bf c} \cap \mathcal{Z}(H_{\bf c})$
	and commutes under $F_{\bf c}^b$, for all $b \in L_{\bf c}$. In order to define $\bf g $, we set
	$\mathbb{G}_{\bf g}=\{x_{\alpha, \ell}- x_{\beta, \ell}\mid \ell < |\mathbb{G}_\alpha|  \}$ and define $F_{\bf g}^b:=F_{\bf c}^b \rest_{\mathbb{G}_{\bf g}}.$
	It turns out that $\bf g \in \mathcal{G}$.]
	
	The lemma follows.
\end{proof}

\begin{corollary}\label{a25}  Assume G\"{o}del's axiom of constructibility $\bf V=\bf L$, and  let $L$ be a group. Then the following are equivalent:
	\begin{itemize}
		\item[$(a)$] For every  cardinal $\lambda>2^{|L |+ {\aleph_0}}$  as in Theorem \ref{5h.11}, there is a group $H$ of cardinality  $\lambda$ such that $\aut(H)\cong L$.
		\item[$(b)$]
		There is some 	$\mathbf c\in\mathbf C^+_{\rm{aut}}$
		such that $L_ \mathbf c\cong L$, $\aut(H_{\mathbf c})\cong L$ and $|\mathcal{Z}(H_{\mathbf c})| > 2^{|L |+\aleph_0}$.
	\end{itemize}
\end{corollary}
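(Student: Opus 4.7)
The direction $(a)\Rightarrow(b)$ is almost immediate. Pick any $\lambda>2^{|L|+\aleph_0}$ given by $(a)$ together with an isomorphism $F\colon L\to\aut(H)$ with $|H|=\lambda$, and set $\mathbf c:=(L,H,F)\in\mathbf C_{\rm{aut}}^+$. Since ${\rm Inn}(H)\cong H/\mathcal{Z}(H)$ embeds into $\aut(H)\cong L$, one has $|H/\mathcal{Z}(H)|\le|L|<\lambda$, whence $|\mathcal{Z}(H_\mathbf c)|=\lambda>2^{|L|+\aleph_0}$, proving $(b)$.

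For $(b)\Rightarrow(a)$ the plan is to bring $\mathbf c$ into the framework of Theorem~\ref{5h.11}. First I enrich $\mathbf c\in\mathbf C_{\rm{aut}}^+$ to some $\mathbf c'\in\mathbf C_{\rm{aut}}^4$ with $L_{\mathbf c'}=L$ and $F_{\mathbf c'}$ still an isomorphism onto $\aut(H_{\mathbf c'})$. The auxiliary data $N,h,h^\ast,Q^{\bar s}$ are added as prescribed in Definition~\ref{a2}; the reduced-center and torsion-free $\mathbb P_{\mathbf c'}$-divisible-quotient axioms of Definitions~\ref{def3}(1)--(2) are arranged by absorbing the divisible summand of $\mathcal{Z}(H_\mathbf c)$, the $p$-torsion, and every embedded copy of $(J_p,+)$ into a characteristic subgroup $H^\ast_{\mathbf c'}$ of size at most $(|L|+\aleph_0)^{\aleph_0}$, while the bulk of $\mathcal{Z}(H_\mathbf c)$ survives as a reduced torsion-free quotient of cardinality $>2^{|L|+\aleph_0}$. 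Consequently $|H_{\mathbf c'}|>2^{|L_{\mathbf c'}|+\aleph_0}$, so Lemma~\ref{t11} produces some $(\mathbf c',\mathcal G)\in\mathbf C_{\rm{aut}}^6$ with $\mathcal G$ non-empty, and inspection of that proof also delivers $\Hom(N_{\mathbf c'},\mathbb G_{\mathbf g})=0$ for every $\mathbf g\in\mathcal G$.

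Under $\bold V=\bold L$, $\diamondsuit_\lambda(S)$ holds on a non-reflecting stationary $S\subseteq S^\lambda_{\aleph_0}$ for every regular $\lambda>\aleph_1$. Fixing any regular $\lambda>2^{\|\mathbf c'\|}$ of the form considered in $(a)$, Theorem~\ref{5h.11} applied to $(\mathbf c',\mathcal G)$ yields $\mathbf m\in\mathbf C_{\rm{aut}}^6\cap\mathbf M_{\mathbf c'}$ of cardinality $\lambda$ satisfying $(\alpha)$--$(\gamma)$. To conclude $\aut(H_\mathbf m)\cong L$, let $g\in\aut(H_\mathbf m)$; by $(\gamma)$ write $g=F_\mathbf m^b\circ g_f$ with $b\in L$ and $f\in\Hom(N_{\mathbf c'},\mathcal{Z}(H_\mathbf m))$, and by $(\beta)$ the image of $f$ lies in $\mathcal{Z}(H_{\mathbf c'})$. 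Hence $g_f$ restricts to an automorphism of $H_{\mathbf c'}$ by Proposition~\ref{2b.16}, and the bijectivity of $F_{\mathbf c'}$ gives $g_f\rest_{H_{\mathbf c'}}=F_{\mathbf c'}^d$ for a unique $d\in L$. The coherence clause $(\ast)_0(i)$ in the construction of $\mathbf m$ (applied with the parameter there equal to $e_L$) then forces $g_f=F_\mathbf m^d$ on all of $H_\mathbf m$, so $g=F_\mathbf m^{bd}\in{\rm Im}(F_\mathbf m)$. This proves $\aut(H_\mathbf m)={\rm Im}(F_\mathbf m)\cong L$ with $|H_\mathbf m|=\lambda$, establishing $(a)$.

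The principal obstacle is the enrichment step: lifting a $\mathbf C_{\rm{aut}}^+$-datum to a $\mathbf C_{\rm{aut}}^4$-datum without losing the hypothesis $|\mathcal{Z}(H)|>2^{|L|+\aleph_0}$ demands showing that the divisible part, $p$-torsion, and embedded $J_p$'s in $\mathcal{Z}(H)$ can altogether be confined to a subgroup of cardinality at most $(|L|+\aleph_0)^{\aleph_0}$ -- an argument of the same flavor as the classical bounds on $|\aut(H)|$ in terms of the structure of $\mathcal{Z}(H)$. Once this is in place, everything else reduces to a direct application of Lemma~\ref{t11}, Theorem~\ref{5h.11}, and Proposition~\ref{2b.16}.
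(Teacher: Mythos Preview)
Your proposal is correct and follows essentially the same route as the paper: enrich the $\mathbf C_{\rm aut}^+$-datum to $\mathbf C_{\rm aut}^4$, invoke Lemma~\ref{t11} to reach $\mathbf C_{\rm aut}^6$ with $\Hom(N_{\mathbf c},\mathbb G_{\mathbf g})=0$, apply Theorem~\ref{5h.11}, and use the coherence clause $(\ast)_0(i)$ to collapse every $F_{\mathbf m}^b\circ g_f$ to some $F_{\mathbf m}^d$. The paper's write-up is terser---it restricts the composite $F_{\mathbf m}^b\circ g_f$ rather than $g_f$ alone, and it does not name $(\ast)_0(i)$ explicitly---but the underlying mechanism is identical.

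The one place you diverge in emphasis is the enrichment step, which you flag as ``the principal obstacle''. In the paper this is simply Lemma~\ref{a22}(3): any $\mathbf d\in\mathbf C_{\rm aut}^+$ lifts to $\mathbf c\in\mathbf C_{\rm aut}^4$ with $\mathrm{res}_0(\mathbf c)=\mathbf d$ and $|H^\ast_{\mathbf c}|\le(|L_{\mathbf c}|+\aleph_0)^{\aleph_0}$. So rather than arguing ab initio that the divisible, torsion and $J_p$-parts are small, you can cite that lemma directly; your informal description of what it accomplishes is accurate, but note one technical slip---Definition~\ref{def3}(1)(b) requires $\mathcal Z(H_{\mathbf c})$ itself to be reduced, not merely that the divisible part be absorbed into $H^\ast_{\mathbf c}$, so the ``absorbing the divisible summand'' phrasing is imprecise.
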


\begin{proof}

	$(a)\Rightarrow (b)$:
	This is clear, as we can define $\mathbf c\in\mathbf C^+_{\rm{aut}}$ by $\mathbf c=(L_{\mathbf c}, H_{\mathbf c}, F_{\mathbf c})=(L, H, F)$,
	where $F: L \cong \aut(H)$ is an isomorphism.
	
	$(b)\Rightarrow (a)$:  Let $\mathbf c\in\mathbf C^+_{\rm{aut}}$
	be such that $L_ \mathbf c\cong L$ and $|H_{\mathbf c}| > 2^{|L|+\aleph_0}$. Let also  $\lambda>2^{|L |+ {\aleph_0}}$ be as in Theorem \ref{5h.11}.  We combine Lemma   \ref{a22}(3) along with Lemma~\ref{t11}, and extend $\mathbf c$
	to some $(\mathbf c, \mathcal{G}) \in \mathbf C^6_{\rm{aut}}$.
	We are going to show that $\Hom(N_{\mathbf c}, \mathbb{Z})=0$.
	Indeed, this follows from the fact that the ${\mathbf c}$ we construct is auto-rigid and the center of $H_{\mathbf c}$ has size bigger than the size of $L$.
	Thanks to Lemma \ref{lem3}, with ${\mathbf m}={\mathbf c}$, we see $\Hom(N_{\mathbf c}, \mathbb{Z})=0$. In fact, following the
	previous proof, $\Hom(N_{\bf c}, \mathbb{G}_{\bf g})= 0,$ for all $\bf g \in \mathcal{G}.$
	Theorem \ref{5h.11}  gives us
	an ${\mathbf m}$
	so that $|H_{\mathbf m}| =\lambda$ and
	$\aut(H_{\mathbf m})=\{F_{\mathbf m}^b \circ g_f\mid
	b\in L_\mathbf{c}, f \in \Hom(N_{\bold c},\mathcal{Z}(H_{\bold m}))\}.$
	We note that for any $b, f$ as above,
	$(F_{\mathbf m}^b \circ g_f) \rest_{H_{\mathbf c}} \in \aut(H_{\mathbf c})$,
	hence for some $d \in L_{\mathbf c},$ we have $(F_{\mathbf m}^b \circ g_f) \rest_{H_{\mathbf c}} =F_{\mathbf c}^d$.
	Then $F_{\mathbf m}^b \circ g_f =F_{\mathbf m}^d$. Thus
$
	\aut(H_{\mathbf m}) \subseteq \{F_{\mathbf m}^d\mid  d \in L_{\mathbf c}    \} \subseteq \aut(H_{\mathbf m}),
$
	so $\aut(H_{\mathbf m}) \cong L_{\mathbf c} \cong L.$
\end{proof}

Similarly, we can prove the following.
\begin{corollary}\label{a299}  Assume G\"{o}del's axiom of constructibility $\bf V=\bf L,$ and let $L$ be a group.  Then the following are equivalent:
	\begin{itemize}
		\item[$(a)$] For every $\lambda>2^{|L |+ {\aleph_0}}$  as in Theorem \ref{5h.11}, there is a group $H$ of cardinality  $\lambda$ such that $\aut(H)\cong L$,
		and for some $H_* \subseteq \mathcal{Z}(H)$ of cardinality $<\!\lambda$, $ \mathcal{Z}(H)/ H_*$ is a $\lambda$-free abelian group.
		
		\item[$(b)$]
		There is 	$\mathbf c\in\mathbf C^+_{\rm{aut}}$
		such that $L_ \mathbf c\cong L$, $\aut(H_{\bf c})\cong L$ and $|\mathcal{Z}(H_{\mathbf c})| > 2^{|L |+\aleph_0}$, and there is $\bf g \in \mathcal{G}$ such that
		$\mathbb{G}_{\bf g}$ is a free abelian group.
	\end{itemize}
\end{corollary}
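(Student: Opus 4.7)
My plan is to mirror the proof of Corollary~\ref{a25}, adding the bookkeeping needed to track freeness of the quotient $\mathcal{Z}(H)/H_*$ and the existence of a free $\mathbf g\in\mathcal{G}$.

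For the direction $(b)\Rightarrow(a)$, I would start from the given $\mathbf c\in\mathbf C^+_{\rm{aut}}$ with $|\mathcal{Z}(H_{\mathbf c})|>2^{|L|+\aleph_0}$ and a distinguished $\mathbf g_*\in\mathcal{G}$ having $\mathbb{G}_{\mathbf g_*}$ a free abelian group. By Lemma~\ref{a22}(3) upgrade to $\mathbf C^4_{\rm{aut}}$ and by Lemma~\ref{t11} obtain $(\mathbf c,\mathcal{G})\in\mathbf C^6_{\rm{aut}}$ (using the $\mathcal{G}$ provided by (b), enlarged if necessary to include $\mathbf g_*$ and still to satisfy $(\ast)_1$(c) of the lemma's proof). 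Since $\mathbf c$ is auto-rigid and $|\mathcal{Z}(H_{\mathbf c})|>|L_{\mathbf c}|$, Lemma~\ref{lem3} gives $\Hom(N_{\mathbf c},\mathbb{Z})=0$, whence by Remark~\ref{sl}(2) one has $\Hom(N_{\mathbf c},\mathbb{G}_{\mathbf g})=0$ for every $\mathbf g\in\mathcal{G}$. Apply Theorem~\ref{5h.11} to produce $\mathbf m\in\mathbf C^6_{\rm{aut}}\cap\mathbf M_{\mathbf c}$ of size $\lambda$ that is $\lambda$-free over $\mathbf c$ and satisfies clause $(\gamma)$ there; exactly as in the proof of Corollary~\ref{a25}, this forces $\aut(H_{\mathbf m})\cong L$. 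For the extra freeness conclusion, set $H_*:=\mathcal{Z}(H_{\mathbf c})$, which has cardinality at most $\|\mathbf c\|<\lambda$. The $\lambda$-freeness of $\mathbf m$ over $(\mathbf c,\mathbf g_*)$ (read off from the inductive construction, which always embeds copies of $\mathbb{G}_{\mathbf g}$ for $\mathbf g\in\mathcal{G}$), combined with $\mathbb{G}_{\mathbf g_*}$ being free abelian, yields via Remark~\ref{sl}(1) that $H_{\mathbf m}/H_{\mathbf c}$ is $\lambda$-free; restricting to centers gives that $\mathcal{Z}(H_{\mathbf m})/H_*$ is $\lambda$-free, as required.

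For $(a)\Rightarrow(b)$, given $H$ of cardinality $\lambda$ with $\aut(H)\cong L$ and an $H_*\subseteq\mathcal{Z}(H)$ of size $<\lambda$ with $\mathcal{Z}(H)/H_*$ $\lambda$-free, set $\mathbf c=(L,H,F)\in\mathbf C^+_{\rm{aut}}$. Since $|H_*|<\lambda$ and $\mathcal{Z}(H)/H_*$ is $\lambda$-free (hence nontrivial of size $\lambda$), automatically $|\mathcal{Z}(H_{\mathbf c})|\geq\lambda>2^{|L|+\aleph_0}$. Use Lemma~\ref{a22}(3) and Lemma~\ref{t11} to attach $\mathcal{G}$ and obtain $(\mathbf c,\mathcal{G})\in\mathbf C^6_{\rm{aut}}$. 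The new point is to produce $\mathbf g\in\mathcal{G}$ with $\mathbb{G}_{\mathbf g}$ free. I would revisit the construction inside the proof of Lemma~\ref{t11}: enlarge $H_*$ if needed so that $H^*_{\mathbf c}\cap\mathcal{Z}(H_{\mathbf c})\subseteq H_*$, and restrict the candidates $x_\alpha$ to elements of $\mathcal{Z}(H)\setminus H_*$. For each such $x_\alpha$, the pure closure of $\{x_\alpha\}\cup(H^*_{\mathbf c}\cap\mathcal{Z}(H_{\mathbf c}))$ under the $F^b_{\mathbf c}$'s has size $\leq|L|+\aleph_0<\lambda$, so its image in $\mathcal{Z}(H)/H_*$ lies in a free abelian group of rank $\leq|L|+\aleph_0$ and is therefore itself free. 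Running Lemma~\ref{t11}'s pigeonhole over $2^{|L|+\aleph_0}<\lambda$ many isomorphism types gives two indices $\alpha,\beta$ whose enumerated structures $(x_{\alpha,\ell})_\ell$ and $(x_{\beta,\ell})_\ell$ match on $H^*_{\mathbf c}\cap\mathcal{Z}(H_{\mathbf c})$ and project, modulo $H_*$, to identical free bases; the differences $y_\ell:=x_{\alpha,\ell}-x_{\beta,\ell}$ vanish on $H^*_{\mathbf c}\cap\mathcal{Z}(H_{\mathbf c})$, so $\mathbb{G}_{\mathbf g}:=\langle y_\ell\rangle$ is isomorphic to $\mathbb{G}_\alpha/(H^*_{\mathbf c}\cap\mathcal{Z}(H_{\mathbf c}))$, which is a free abelian group.

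The main obstacle is precisely the last step: reading off that $\mathbb{G}_{\mathbf g}$ is free from the $\lambda$-freeness of $\mathcal{Z}(H)/H_*$. A priori the pure closure of $x_\alpha$ inside $\mathcal{Z}(H)$ need not be free, so freeness has to be imported through the quotient by $H_*$, and one has to verify carefully that the pigeonholing in Lemma~\ref{t11} is compatible with this extra constraint (i.e.\ that the isomorphism type chosen witnesses freeness of the quotient, which is a Borel/definable property of the enumerated structure and so survives the counting argument). Once this compatibility is checked, both directions follow by direct combination of Theorem~\ref{5h.11}, Lemmas~\ref{lem3}, \ref{a22}, \ref{t11}, and Remark~\ref{sl}, exactly in the pattern of Corollary~\ref{a25}.
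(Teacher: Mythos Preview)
The paper gives no detailed proof here, only the line ``Similarly, we can prove the following,'' so your plan to mirror Corollary~\ref{a25} with added freeness bookkeeping is exactly the approach the paper intends, and your outline is essentially correct.

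One step in $(b)\Rightarrow(a)$ needs tightening. You invoke Remark~\ref{sl}(1) with the single free $\mathbf g_*$, but the inductive construction inside Theorem~\ref{5h.11} (clauses $(\ast)_0$(g),(h)) adjoins copies of $\mathbb{G}_{\mathbf g}$ for \emph{every} $\mathbf g\in\mathcal{G}$, so what you actually obtain is $\lambda$-freeness of $\mathbf m$ over $(\mathbf c,\mathcal{G})$, not over $(\mathbf c,\mathbf g_*)$ alone; Remark~\ref{sl}(1) as stated then does not give $\lambda$-freeness of $H_{\mathbf m}/H_{\mathbf c}$ unless all the $\mathbb{G}_{\mathbf g}$ actually used are free. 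The natural repair is the mirror of your own $(a)\Rightarrow(b)$ move: before invoking Theorem~\ref{5h.11}, pass to an $\mathbf m_0\in\mathbf M_{\mathbf c}$ whose center is $\mathcal{Z}(H_{\mathbf c})$ plus a large direct sum of copies of the free $\mathbb{G}_{\mathbf g_*}$, and rerun Lemma~\ref{t11} relative to $\mathbf m_0$ so that every one-generated $\mathbf g$ forced into the new $\mathcal{G}$ by Definition~\ref{def3}(4)(c) lives inside a free summand and is therefore itself free (subgroups of free abelian groups being free). With that adjustment both directions go through as you describe, and the pigeonhole compatibility issue you flag at the end is indeed the point requiring care.
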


\end{document}